\documentclass[leqno,12pt]{article}

\usepackage{amstext    }
\usepackage{amsthm    }
\usepackage{a4}
\usepackage[mathscr]{eucal}
\usepackage{mathrsfs}

\usepackage{amsmath}
\usepackage{amssymb}
\usepackage{amscd}

\numberwithin{equation}{section}



\newtheorem{theorem}{Theorem}[section]

\newtheorem{proposition}[theorem]{Proposition}

\newtheorem{lemma}[theorem]{Lemma}

\newcommand{\cali}[1]{\mathscr{#1}}

\newcommand{\volume}{{\rm vol}}

\renewcommand{\Im}{\mathop{\mathrm{Im}}}
\renewcommand{\Re}{\mathop{\mathrm{Re}}}
\newcommand{\const}{\mathop{\mathrm{const}}}
\newcommand{\length}{\mathop{\mathrm{length}}}

\newcommand{\Ai}{\mathop{\mathrm{Ai}}\nolimits}
\newcommand{\comp}{{\mathop{\mathrm{comp}}\nolimits}}

\newcommand{\ind}{{\bf 1}}

\newcommand{\Cc}{\cali{C}}

\newcommand{\B}{\mathbb{B}}
\newcommand{\D}{\mathbb{D}}
\newcommand{\C}{\mathbb{C}}
\newcommand{\J}{\mathbf{J}}
\newcommand{\K}{\mathbf{K}}
\newcommand{\N}{\mathbb{N}}
\newcommand{\Z}{\mathbb{Z}}
\newcommand{\R}{\mathbb{R}}
\newcommand{\Sb}{\mathbb{S}}

\newcommand{\F}{\mathbf{F}}
\newcommand{\E}{\mathbf{E}}

\newcommand{\Omegabf}{\mathbf{\Omega}}

\renewcommand{\H}{\mathbf{ H}}


\title{Asymptotic number of scattering resonances for generic Schr\"odinger operators}

\author{Tien-Cuong Dinh and Duc-Viet Vu}

\begin{document}

\maketitle

\begin{abstract}
Let $-\Delta+V$ be  the Schr\"odinger operator acting on $L^2(\R^d,\C)$ with $d\geq 3$ odd. Here $V$ is a bounded real or complex function vanishing outside the closed ball of center 0 and of radius $a$. 
Let  $n_V(r)$ denote  the number of resonances of  $-\Delta+V$ with modulus $\leq r$.
We show that for generic potentials $V$ 
$$n_V(r)=c_d a^dr^d+ O(r^{d-{3\over 16}+\epsilon}) \quad \mbox{as } r\to\infty$$
for any $\epsilon>0$, where $c_d$ is a dimensional constant.
\end{abstract}

\noindent
{\bf Classification AMS 2010}: 35P25, 47A40. 

\noindent
{\bf Keywords: } Schr\"odinger operator, resonance, scattering matrix, scattering pole.

\section{Introduction} \label{intro}

Let $\Delta$ denote the Laplacian on $\R^d$ with $d$ odd. Let $V$ be a bounded complex-valued function with support in the closed ball $\overline \B_a$ of  center $0$ and of radius $a$ in $\R^d$. The purpose of this work is to study the asymptotic number of resonances associated to the Schr\"odinger operator $-\Delta+V$ acting on $L^2(\R^d,\C)$. The problem has a long history and was intensively investigated during the last three decades. We refer to \cite{Christiansen1, Christiansen3, Sjostrand, Stefanov,Vodev2, Zworski4, Zworski5,Zworski6} and to the references therein for an introduction to the subject.

Recall that  for $\lambda\in \C$ large enough with $\Im(\lambda)>0$, the operator $R_V(\lambda):=(-\Delta+V-\lambda^2)^{-1}$ on $L^2(\R^d,\C)$ is well-defined and is bounded. It depends holomorphically on the parameter $\lambda$. 
If $\chi$ is a smooth function with compact support such that $\chi V=V$, one can extend $\chi R_V(\lambda)\chi$ to a family of operators which depends meromorphically on $\lambda\in\C$. The poles of this family, which 
are called {\it the resonances} of the operator $-\Delta+V$,
and their multiplicities do not depend on the choice of $\chi$.  Denote by $n_V(r)$ the number of resonances of modulus $\leq r$ counted with multiplicity.

In dimension $d=1$, Zworski obtained in \cite{Zworski1} that 
\begin{equation} \label{e:dim_1}
n_V(r)={4\over \pi} ar  +o(r) \quad \mbox{as}\quad r\to\infty,
\end{equation}
where $2a$ is the diameter of the support of $V$, see also \cite{Froese, Regge, Simon,Zworski5}. From now on, we assume that $d\geq 3$.

The upper bound for the number of resonances is now well-understood. Define
$$N_V(r):=\int_0^{r} {n_V(t)-n_{V}(0)\over t} dt.$$
Generalizing some results by Melrose \cite{Melrose1, Melrose2, Melrose3}, Zworski obtained in \cite{Zworski3} the following estimate
$$dN_V(r)\leq c_da^dr^d+o(r^d) \quad \mbox{as}\quad r\to\infty,$$
where the sharp constant $c_d$ was identified by Stefanov in \cite{Stefanov}, see Section \ref{s:function} for the definition of $c_d$ and \cite{SZ, Vodev1} for more general results. Lower bound for the number of resonances is only known in special cases that we will discuss below.

For $0<\delta \leq 1$, define
$$\mathfrak{M}^\delta_a:=\Big\{V\in L^\infty(\B_a,\C):\  n_V(r)-c_da^dr^d=O(r^{d-\delta+\epsilon}) \mbox{ as } r\to\infty \mbox{ for every } \epsilon>0\Big\}.$$
This is a subset of the following family introduced by Christiansen in \cite{Christiansen4}
$$\mathfrak{M}_a:=\Big\{V\in L^\infty(\B_a,\C): \   n_V(r)-c_da^dr^d=o(r^d) \mbox{ as } r\to\infty\Big\}.$$
Our first main result is the following theorem. 
 
 \begin{theorem} \label{t:main_1}
 Let $V$ be a radial real-valued function of class $\Cc^2$ on $\overline \B_a$. Write $V(x)=V(\|x\|)$ and assume  that $V(a)\not=0$. Then $V$ belongs to $\mathfrak{M}_a^{3/4}$. 
 \end{theorem}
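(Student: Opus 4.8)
The plan is to reduce the counting of resonances for a radial potential to the study of a countable family of one-dimensional (ODE) scattering problems, indexed by the spherical harmonics. For a radial $V$, the operator $-\Delta+V$ decomposes on $L^2(\R^d)$ into a direct sum over angular momenta $\ell\in\N$ of the half-line operators
$$L_\ell:=-\frac{d^2}{ds^2}+\frac{\ell(\ell+d-2)+(d-1)(d-3)/4}{s^2}+V(s)$$
acting on $L^2((0,\infty))$, each appearing with multiplicity $m_\ell=\dim\Hc_\ell$ equal to the dimension of the space of spherical harmonics of degree $\ell$, which grows like $\ell^{d-2}$. The resonances of $-\Delta+V$ are, with multiplicity, the union over $\ell$ of the resonances of $L_\ell$, each counted $m_\ell$ times. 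So I would first set up this decomposition carefully (this is classical; see the references to Christiansen and to the scattering literature) and express $n_V(r)$ as $\sum_\ell m_\ell\, n_\ell(r)$, where $n_\ell(r)$ counts resonances of $L_\ell$ of modulus $\le r$.

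Next I would analyze each $L_\ell$. The resonances of $L_\ell$ are the zeros of an entire function built from the Jost solution: writing the solution that behaves like the outgoing Hankel/Bessel combination at infinity and matching it, via the transfer across $[0,a]$, to the solution regular at the origin, one gets an entire function $F_\ell(\lambda)$ whose zeros are exactly the resonances. Because $V$ is $\Cc^2$ and $V(a)\neq0$, a WKB/Liouville–Green analysis on $[0,a]$ gives precise asymptotics of $F_\ell$, and the hypothesis $V(a)\ne0$ is exactly what forces the leading boundary term in the transfer matrix not to degenerate — this is the mechanism that pins the resonances near the ``free'' locations (the zeros of Hankel functions / the sphere case) up to a controlled error. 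Counting zeros of $F_\ell$ in a disc of radius $r$ via the argument principle / Jensen's formula then yields $n_\ell(r)=\frac{2}{\pi}ar+O_\ell(1)$-type asymptotics for each fixed $\ell$, but — crucially — I need this \emph{uniformly} in $\ell$ with an error term that is summable against $m_\ell$ up to the claimed exponent $r^{d-3/4+\epsilon}$. The whole point is to track how the error in $n_\ell(r)$ depends jointly on $\ell$ and $r$: for $\ell$ up to order $r$ the one-dimensional count is genuinely active, while for $\ell\gg r$ the centrifugal barrier dominates and $L_\ell$ has essentially no resonances of modulus $\le r$, so those terms drop out.

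Then I would sum: $n_V(r)=\sum_{\ell\lesssim r} m_\ell\, n_\ell(r)$. The main term $\sum_{\ell\lesssim r} m_\ell\cdot(\text{density of zeros})$ must be shown to reproduce $c_d a^d r^d$ — here the identification of $c_d$ from Stefanov's work (Section \ref{s:function}) is used, and it should come out as a Riemann-sum / integral over the ``phase space'' $\{|\xi|\le 1\}$ exactly matching the known sharp constant. The remainder is $\sum_{\ell\lesssim r} m_\ell\cdot O(\text{error}_\ell(r))$, and one needs the per-$\ell$ error, after multiplication by $m_\ell\sim\ell^{d-2}$ and summation over $\ell\lesssim r$, to be $O(r^{d-3/4+\epsilon})$. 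The hard part — and where the exponent $3/4$ comes from — will be controlling this error uniformly near the transition regime $\ell\approx r$: there the Bessel functions are in their turning-point (Airy) regime, the asymptotics of $F_\ell$ are least favorable, and one has to estimate zero counts for $F_\ell$ with constants that degrade at a precise rate as $\ell/r\to1$. I expect the bulk of the technical work to be a careful, uniform-in-$(\ell,r)$ WKB analysis with Airy-function corrections at the turning point $s$ where $\ell(\ell+d-2)/s^2\approx\lambda^2$, combined with a quantitative zero-counting lemma (Jensen/Turán-type) applied to $F_\ell$ on annuli, so that the accumulated error over the $O(r)$ relevant values of $\ell$, each weighted by $\ell^{d-2}$, stays below $r^{d-3/4+\epsilon}$.
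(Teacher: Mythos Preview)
Your framework is the paper's (and Zworski's) framework: spherical-harmonic decomposition, a per-$\ell$ entire function whose zeros are the resonances, and summation against $\dim H_\ell$. The role you assign to $V(a)\neq 0$ is exactly right; in the paper's normalization it is what keeps the constant $\sigma$ in the model function bounded above and below.

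Where your execution diverges from the paper is in the counting mechanism. You propose Jensen/Tur\'an-type integral counting of the zeros of $F_\ell$. The paper does something sharper: working in the phase coordinate $\rho=\rho(z)$ from the uniform Bessel asymptotics, it partitions the relevant domain into thin strips $\Omegabf^\nu_c(k)$ of width $\pi/\nu$, and uses Rouch\'e's theorem to show that the true function $g_\nu$ and an explicit model $h_\nu(z)=e^{-2\nu\rho}/(\nu^2(1-z^2))-\sigma$ have exactly one zero each per strip. This gives a \emph{labelled} sequence of zeros $\rho_{\nu,k}$, not just a count, and the paper then proves a displacement estimate $|\nu z_{\nu,k}-\nu\widetilde z_{\nu,k}|\le |\nu\widetilde z_{\nu,k}|^{1/4+\epsilon}$ comparing them to the zeros $\widetilde z_{\nu,k}$ of the Bessel function $J_\nu$. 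The exponent $3/4$ comes precisely from this $1/4+\epsilon$, which in turn is forced by the restriction $|k|\ge\nu^{1/4}$ needed to stay away from the turning point --- so your intuition about the Airy regime being the bottleneck is correct, but the mechanism is zero-by-zero localization rather than an integrated count.

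The other point your outline misses is the structure of the main term. The count splits as $n_l=2n_l^++2n_l^-+O(1)$: the $n_l^+$ piece (zeros with $k>0$) is matched to the Bessel zeros and summed via the \emph{Weyl law for the Dirichlet Laplacian on the unit ball}, giving the $\volume(\B)^2/(2\pi)^d$ contribution to $c_d$; the $n_l^-$ piece (zeros with $-\nu/2<k\le 0$) is matched to a lattice of points on $\partial\K_+$ and gives the boundary integral in the formula for $c_d$. It is not a single Riemann sum as you suggest. A Jensen-formula approach might be made to work, but the paper's individual localization is what produces the clean $r^{1/4+\epsilon}$ displacement and hence the $r^{d-3/4+\epsilon}$ remainder after summation.
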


This result generalizes a theorem of Zworski in \cite{Zworski2} which says that $V$ belongs to $\mathfrak{M}_a$, see also Stefanov \cite[Th.3]{Stefanov}.
The proof will be given in Section \ref{s:radial}. It follows Zworski's approach and is based on some refinements of his arguments.

Consider now a connected open set $\Omega$ in $\C^p$ and a uniformly bounded family $V_\vartheta$ of potentials in $L^\infty(\B_a,\C)$ depending holomorphically on the parameter $\vartheta\in\Omega$. Our second main result is the following theorem.

\begin{theorem} \label{t:main_2}
Let $V_\vartheta$ be a holomorphic family of potentials as above. Suppose there are $\vartheta_0\in\Omega$ and $0<\delta\leq 1$ such that $V_{\vartheta_0}$ belongs to $\mathfrak{M}^\delta_a$. Then  there is a pluripolar set $E\subset \Omega$ such that $V_\vartheta\in \mathfrak{M}^{\delta/4}_a$ for all $\vartheta\in \Omega\setminus E$.
\end{theorem}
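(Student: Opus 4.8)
The plan is to realize the resonances of $-\Delta+V_\vartheta$ as the zeros of an entire function $D_\vartheta(\lambda)$ which is holomorphic jointly in $(\vartheta,\lambda)\in\Omega\times\C$ — for instance a suitable regularized Fredholm determinant of $I+V_\vartheta R_0(\lambda)$, $R_0$ the free resolvent, as used in \cite{Christiansen4} and the works cited in the introduction — so that the circular mean
$u(r,\vartheta):=\frac1{2\pi}\int_0^{2\pi}\log|D_\vartheta(re^{i\theta})|\,d\theta$
is plurisubharmonic in $\vartheta$ on $\Omega$ and, by Jensen's formula, equals $N_{V_\vartheta}(r)+m_\vartheta\log r+h_\vartheta$, where $m_\vartheta=n_{V_\vartheta}(0)$ and $h_\vartheta$ does not depend on $r$. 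Since the resonances are discrete for every bounded compactly supported potential, $D_\vartheta\not\equiv0$ for all $\vartheta$; discarding from $\Omega$ the locally analytic (hence pluripolar) set where $m_\vartheta$ exceeds its generic value $m_0$, one may treat $u(r,\cdot)$ as $N_{V_{(\cdot)}}(r)$ up to the fixed plurisubharmonic function $h_{(\cdot)}$ and the harmless term $m_0\log r$.

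Two inputs feed the argument. First, a quantitative form of the sharp upper bound of Zworski \cite{Zworski3} and Stefanov \cite{Stefanov}: on each compact $K\subset\Omega$, uniformly in $\vartheta\in K$, $\log|D_\vartheta(\lambda)|\le\frac{c_da^d}{d}|\lambda|^d+O_K(|\lambda|^{d-1}\log|\lambda|)$, hence $u(r,\vartheta)\le\frac{c_da^d}{d}r^d+O_K(r^{d-1}\log r)$; any remainder $O_K(r^{d-\theta})$ with $\theta\ge\delta$ would suffice below, and $\delta\le1$. Secondly, the hypothesis $V_{\vartheta_0}\in\mathfrak M_a^{\delta}$ gives $N_{V_{\vartheta_0}}(r)-\frac{c_da^d}{d}r^d=O(r^{d-\delta+\epsilon})$, hence $u(r,\vartheta_0)\ge\frac{c_da^d}{d}r^d-O(r^{d-\delta+\epsilon})$. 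Fix a ball $B=B(\vartheta_0,\rho)\Subset\Omega$ and set $\rho_r:=u(r,\cdot)-\frac{c_da^d}{d}r^d$, plurisubharmonic on $B$ and bounded above by $O(r^{d-1}\log r)$. Since $\rho_r$ is subharmonic, the solid–ball mean value inequality at the center gives $\rho_r(\vartheta_0)\le\frac1{|B|}\int_B\rho_r\,d\vartheta$, and combining this with the two bounds just recorded and with $\delta\le1$ yields
$\int_B\big(-\rho_r\big)_+\,d\vartheta\le |B|\big(\sup_B\rho_r\big)_++|B|\big(-\rho_r(\vartheta_0)\big)=O\!\left(r^{d-\delta+\epsilon}\right)$;
that is, in the $L^1(B)$ sense $u(r,\cdot)$ stays within $O(r^{d-\delta+\epsilon})$ of its maximal value.

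To pass from this $L^1$ estimate to a pointwise bound valid off a \emph{pluripolar} set, I would pick radii $r_k\to\infty$ with $r_{k+1}-r_k\asymp r_k^{1-\delta/2+\epsilon}$ (so $\#\{k:r_k\le R\}\asymp R^{\delta/2-\epsilon}$) and apply a standard estimate for the size of sublevel sets of a plurisubharmonic function in terms of its $L^1$‑norm to $\rho_{r_k}-\sup_B\rho_{r_k}$: the set $A_k:=\{\vartheta\in B':\rho_{r_k}(\vartheta)<-r_k^{\,d-\delta/2+\epsilon}\}$, with $B'\Subset B$, then has capacity $O\!\big(r_k^{\,d-\delta+\epsilon}/r_k^{\,d-\delta/2+\epsilon}\big)=O(r_k^{-\delta/2})$, and with this choice of radii $\sum_k\capacity(A_k)<\infty$, so $\bigcap_N\bigcup_{k\ge N}A_k$ has zero capacity, i.e.\ is pluripolar. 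Outside it, $\rho_{r_k}(\vartheta)\ge-r_k^{\,d-\delta/2+\epsilon}$ for large $k$; interpolating over $r\in[r_k,r_{k+1}]$ by monotonicity of $N_{V_\vartheta}$, and using the upper bound above together with the finiteness of $h_\vartheta$ off a pluripolar set, one obtains a pluripolar $E_\epsilon$ with $N_{V_\vartheta}(r)-\frac{c_da^d}{d}r^d=O_\vartheta(r^{d-\delta/2+\epsilon})$ for $\vartheta\notin E_\epsilon$. Finally the classical comparison $n_{V_\vartheta}(r)\le n_{V_\vartheta}(0)+[N_{V_\vartheta}(r+\sigma r)-N_{V_\vartheta}(r)]/\log(1+\sigma)$ and its mirror lower bound, optimized at $\sigma\asymp r^{-\delta/4}$, upgrade this two‑sided estimate on $N_{V_\vartheta}$ to $n_{V_\vartheta}(r)-c_da^dr^d=O_\vartheta(r^{d-\delta/4+\epsilon})$. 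Taking $\epsilon=1/j$ and letting $E$ be the union of the $E_{1/j}$ and of the pluripolar sets discarded earlier, $E$ is pluripolar and $V_\vartheta\in\mathfrak M_a^{\delta/4}$ for all $\vartheta\in\Omega\setminus E$; the exponent $\delta$ is halved once in the pluripotential step and once in the $N$‑to‑$n$ step, which accounts for the factor $1/4$.

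The main obstacle is twofold. On the analytic side one genuinely needs the upper bound of the second paragraph \emph{with a power‑saving remainder and uniformly over the family}: the soft bound $\frac{c_da^d}{d}r^d+o(r^d)$ only gives $u(r,\cdot)/r^d\to\frac{c_da^d}{d}$ in $L^1_{\mathrm{loc}}$ and hence, by the same scheme, only $V_\vartheta\in\mathfrak M_a$ off a pluripolar set (the qualitative statement), so the rate comes entirely from the rate in the upper bound. On the pluripotential side the delicate point is precisely the passage from the $L^1$ estimate to one holding off a pluripolar, rather than merely Lebesgue‑null, exceptional set: one must have a \emph{capacity} estimate for sublevel sets of plurisubharmonic functions and organize the Borel–Cantelli bookkeeping over a sufficiently dense sequence of radii so that the loss in the exponent is kept to a single factor of two. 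The remaining ingredients — Jensen's formula, the $N$‑to‑$n$ comparison, and the countable stability of pluripolar sets under letting $\epsilon\downarrow0$ — are routine.
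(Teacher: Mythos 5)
Your proposal is correct and follows the same overall skeleton as the paper — uniform upper bound on the circular mean $\Psi(r,\vartheta)$ with a power-saving remainder, lower bound at $\vartheta_0$ of size $O(r^{-\delta+\epsilon})$, a sparse sequence of radii $r_k$ with $r_{k+1}-r_k\asymp r_k^{1-\delta/2}$, a pluripotential-theoretic passage to a pluripolar exceptional set, and finally the $N$-to-$n$ comparison that halves the exponent once more — but the pluripotential step is done by a genuinely different mechanism.  The paper's Lemma~\ref{l:psh} simply forms $\Phi:=\sum_n n^\alpha\Phi_n$ from nonpositive p.s.h.\ summands: the partial sums decrease, the series converges at $\vartheta_0$ because $\gamma>1$, so $\Phi$ is p.s.h.\ and $E:=\Phi^{-1}(-\infty)$ is pluripolar, and off $E$ one gets $n^\alpha\Phi_n(\vartheta)\to 0$ for free.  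This uses nothing beyond the stability of plurisubharmonicity under decreasing limits and the definition of pluripolar.  You instead invoke a Chern--Levine--Nirenberg/Alexander--Taylor type bound $\capacity(\{\phi<-t\}\cap B')\lesssim \|\phi\|_{L^1(B)}/t$ for nonpositive p.s.h.\ $\phi$, and then run a Borel--Cantelli argument using countable subadditivity of capacity.  That works and gives the same exponent, but it requires a nontrivial external estimate where the paper's trick is self-contained; the paper's Lemma~\ref{l:psh} is in effect a tailor-made, elementary substitute for your capacity-plus-Borel--Cantelli step.  Two further minor differences: the paper cites Christiansen's Proposition~\ref{p:Christiansen}, which already controls $N_V(r)-\frac{1}{2\pi}\int_0^{2\pi}\log|s_V(re^{i\theta})|\,d\theta$ by $O(r^{d-1})$, whereas you rederive this from Jensen's formula and must separately handle the jump locus of $m_\vartheta=n_{V_\vartheta}(0)$ and the $r$-independent term $h_\vartheta$; and the paper works with the scattering determinant $s_V$ rather than a regularized Fredholm determinant, which is why the $m_\vartheta\log r$ bookkeeping never arises there.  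Finally, note that the uniform $r^{d-1}\log r$ remainder you need in the upper bound is not taken from Zworski--Stefanov directly but is the paper's own improvement, Theorem~\ref{t:upper_bound}, proved in Section~\ref{s:general_operator}; the earlier results only give $o(r^d)$, which as you observe would only reproduce Christiansen's qualitative statement.
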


Note that pluripolar sets in $\Omega$ are of Hausdorff dimension at most equal to $2p-2$ and their intersections with $\R^p$ have zero $p$-dimensional volume, see e.g. \cite{DNS, LelongGruman, Ransford} and also Section \ref{s:holo} for the definition. Therefore, in the last theorem, most of potentials $V_\vartheta$ belong to $ \mathfrak{M}^{\delta/4}_a$. If $V$ is a potential as in Theorem \ref{t:main_1} and if $V'$ is an arbitrary potential in $L^\infty(\B_a,\C)$, then for almost every $\vartheta\in\C$ and almost every  $\vartheta\in\R$ the potential $\vartheta V+(1-\vartheta )V'$ belongs to $\mathfrak{M}_a^{3/16}$.  For such a potential, the number of resonances is asymptotically $c_da^dr^d$. Therefore, this property holds for generic potentials in $L^\infty(\B_a,\C)$ or in $L^\infty(\B_a,\R)$ if we only consider real potentials.

A version of Theorem \ref{t:main_2} has been obtained by Christiansen in \cite{Christiansen4}, where assuming $V_{\vartheta_0}\in \mathfrak{M}_a$, she proved that the counting function $N_{V_\vartheta}(r)$ satisfies
$$\limsup_{r\to\infty} {dN_{V_\vartheta}(r)\over r^d}=c_da^d$$ 
for $\vartheta$ outside a pluripolar set, see also \cite{Christiansen1,CH1}. In particular, this property holds for generic potentials $V$ in $L^\infty(\B_a,\C)$ or in $L^\infty(\B_a,\R)$. 

The proof of Theorem \ref{t:main_2} will be given in Section \ref{s:holo}. It partially follows  Christiansen's approach. We also prove and use there
some property of plurisubharmonic functions (see Lemma \ref{l:psh} below) and an upper bound for $N_V(r)$ which generalizes the above estimate (\ref{e:dim_1}) by Zworski and Stefanov, see Theorem \ref{t:upper_bound} in Section \ref{s:general_operator} below.

Note that Christiansen constructed in \cite{Christiansen2} examples of complex Schr\"odinger operators without resonances. This shows that the exceptional set $E$ in Theorem \ref{t:main_2} is not always empty. 
In comparison with similar results from complex dynamics, it is reasonable to believe that $E$ is always a finite or countable union of analytic subsets of $\Omega$, see e.g. \cite{DS}.

S\'a Barreto and Zworski showed in \cite{SaZ}  that any Schr\"odinger operator with compactly supported real potential admits an infinite number of resonances, see also \cite{Christiansen0, Sa}. The sharp asymptotic behavior for the number of resonances in this case is still unknown.    

\medskip
\noindent
{\bf Notation and convention.} Denote by $\B_a$ the open ball of center 0 and of radius $a$ in $\R^d$ and $\D(z,r)$ the disc of center $z$ and of radius $r$ in $\C$. Let $\Sb^{d-1}$ denote the unit sphere in $\R^d$.
Define $\B:=\B_1$, $\D(r):=\D(0,r)$, $\D:=\D(1)$, $\N^*:=\N\setminus\{0\}$, $\R_\pm:=\{t\in\R:\ \pm t\geq 0\}$ and
$\C_\pm:=\{z\in\C:\ \pm\Im z>0\}$.
The functions 
$\rho$, $\zeta$, $\Ai$, $J_\nu$, the constant $c_d$, the sets $\Omegabf, \K_+$ and the space $H_l$ are introduced in Section \ref{s:function}; the sets  $\Omegabf^\nu_c, \Omegabf^\nu_c(k),\J_c^\nu$  in Section \ref{s:radial}. 
Define $\arg z:=\theta$ and $\log z:=\log r+ i \theta$ for $z=re^{i\theta}$ with $r>0$ and $\theta\in(-\pi,\pi]$.
All the constants we will use depend only on $a,d,\|V\|_\infty$ and can be changed from line to line. 
The notation $\lesssim$ and $\gtrsim$ means inequalities up to a multiplicative constant.
An expression likes $f(z)\sim g(z)$ as $z\to\infty$ means $f(z)/g(z)\to 1$ when $|z|\to\infty$.
An expression likes 
$$f(z)\sim g(z)\sum_{n=0}^\infty {a_n\over z^n}$$
means 
$${f(z)\over g(z)} = \sum_{n=0}^{N-1} {a_n\over z^n} +O(z^{-N})\quad \mbox{as} \quad |z|\to\infty$$
for each $N\geq 0$, see \cite[p.16]{Olver2}. 

\medskip
\noindent
{\bf Acknowledgement.} We would like to thank Viet-Anh Nguyen, St\'ephane Nonnenmacher and Nessim Sibony for their remarks and for fruitful discussions on this subject.

\section{Some properties of Bessel functions} \label{s:function}

In this section, we give some properties of Bessel functions and of other auxiliary functions that will be used later in the proofs of the main theorems. We refer to Olver \cite{Olver1,Olver2} for details.

Let $\rho$ be the continuous function on $\overline\C_+\setminus\{0\}$  defined by 
\begin{equation} \label{e:rho_def}
\rho(z):=\log \frac{1+\sqrt{1-z^2}}{z}-\sqrt{1-z^2}
\end{equation}
which extends the real-valued function in $z\in(0,1)$ given by the same formula.
Let $\Omegabf$ be the following union of a half-plane and a half-strip
$${\mathbf\Omega}:= \big\{z\in\C:\ \Re z<0\big\}\cup \big\{z\in\C:\ -\pi<\Im z <0, \Re z\geq 0\big\}.$$ 
Then, the function $\rho$ defines a bijection between $\overline\C_+\setminus\{0\}$ and $\overline{\mathbf\Omega}$.
Moreover, it is holomorphic on $\C_+$ and sends  the intervals
$$[1,\infty), (0,1], [-1,0), (-\infty,-1]$$ 
respectively and bijectively to 
$$i\R_+, \R_+,  \big\{z\in\C:\ \Im z=-\pi, \Re z\geq 0\big\} \quad \mbox{and} \quad i(-\infty,-\pi].$$
A direct computation gives
\begin{equation} \label{e:rho}
{\partial\rho\over \partial z}=-{\sqrt{1-z^2}\over z} \quad \mbox{and} \quad |\rho|\sim\const |1-z|^{3/2} \quad \mbox{as} \quad z\to 1.
\end{equation}

As in \cite{Olver1,Olver2}, one can find an injective continuous function $\zeta:\overline\C_+\setminus\{0\}\to \overline\C_-$ which sends bijectively $(0,1]$ to $\R_+$ and satisfies
\begin{equation} \label{e:zeta}
{2\over 3}\zeta^{3/2}(z)=\rho(z).
\end{equation}
The function $\zeta$ is holomorphic on $\C_+$.

Consider the convex domain 
$$\K_+:=\big\{z\in \C_+:\ \Re\rho(z)>0\big\}.$$ 
Its boundary is the union of the interval $[-1,1]$ and 
the curve $\rho^{-1}([-i\pi,0])$ joining the two points $-1$ and $1$. This is the upper half of the domain $\K$ considered in \cite{Olver1,Olver2}, \cite[p.126]{Stefanov} and \cite[p.377]{Zworski2}. Note that $\K_+$ contains the half-disc $\overline\D({1\over 2})\cap \C_+$.

Recall that the dimensional constant $c_d$ used in Introduction was defined in \cite{Stefanov,Zworski2, Zworski3}. It is equal to
\begin{eqnarray} \label{e:c_d}
c_d & = & {2d\over \pi(d-2)!}\int_{z=x+iy\in\C_+} {\max(-\Re\rho(z),0)\over |z|^{d+2}}dxdy \nonumber\\
& = & {2d\over \pi(d-2)!}\int_{z=x+iy\in \C_+\setminus\K_+} -{\Re\rho(z)\over |z|^{d+2}}dxdy \nonumber\\
& = & {2\volume(\B)^2\over (2\pi)^d} +{2\over \pi d(d-2)!} \int_{\partial\K_+}{|1-z^2|^{1/2}\over |z|^{d+1}}|dz|.
\end{eqnarray}

We will need some basic properties of the Airy function $\Ai(\cdot)$, of its derivative $\Ai'(\cdot)$ and of the Bessel function $J_\nu(\cdot)$ with a large positive parameter $\nu$. The functions $\Ai(\cdot)$ and $\Ai'(\cdot)$ are entire. The function $J_\nu(\cdot)$ is holomorphic on $\C\setminus\R_-$. 
For $w\in\C\setminus\R_-$, define $\xi:={2\over 3} w^{3/2}$, where we use the principal branch for the function $w\mapsto w^{3/2}$. 
 There are real numbers $u_s$ and $v_s$ such that 
\begin{equation} \label{e:Ai_1}
\Ai(w) \sim \frac{e^{-\xi}}{2\pi ^{1/2}w^{1/4} } \sum_{s=0}^{\infty}\frac{u_s}{(-\xi)^s}
\quad \mbox{and} \quad \Ai'(w) \sim \frac{w^{1/4}e^{-\xi}}{2\pi ^{1/2}}\sum_{s=0}^{\infty}\frac{v_s}{(-\xi)^s} 
\end{equation}
as $|w|\to\infty$ in $|\arg w|\leq \pi-\delta$ for every fixed constant $\delta>0$. 

For the values of $\Ai(\cdot)$ and $\Ai'(\cdot)$ on $\C\setminus \R_+$, we need other formulas. With the above notation, there are real numbers $a_s,b_s,a'_s,b'_s$  such that 
\begin{equation} \label{e:Ai_2}
\Ai(-w) \sim \frac{1}{\pi^{1/2}w^{1/4}}\bigg[\cos(\xi-\frac{\pi}{4})\big(1+\sum_{s=1}^{\infty}\frac{a_s}{\xi^{2s}}\big)+\sin(\xi-\frac{\pi}{4})\sum_{s=0}^{\infty}\frac{b_s}{\xi^{2s+1}}\bigg]
\end{equation}
and 
\begin{equation} \label{e:Ai_3}
\Ai'(-w) \sim \frac{w^{1/4}}{\pi^{1/2}}\bigg[\sin(\xi-\frac{\pi}{4})\big(-1+\sum_{s=1}^{\infty}\frac{a'_s}{\xi^{2s}}\big)+\cos(\xi-\frac{\pi}{4})\sum_{s=0}^{\infty}\frac{b'_s}{\xi^{2s+1}}\bigg]
\end{equation}
as $|w|\to\infty$ with
$|\arg w|\leq {2\pi\over 3}-\delta$ for every fixed constant $\delta>0$.

For the Bessel function $J_\nu(\cdot)$, when $\nu\to\infty$, the following relation holds 
uniformly in $0\leq \arg z\leq \pi-\delta$ with any fixed constant $\delta>0$ 
\begin{equation}\label{e:Bessel_1}
J_\nu(\nu z) \sim \big(\frac{4\zeta}{1-z^2}\big)^{1/4}\bigg[ \frac{\Ai(\nu^{2/3}\zeta)}{\nu^{1/3}}\big(1+\sum_{s=1}^{\infty}\frac{A_s(\zeta)}{\nu^{2s}}\big)+\frac{\Ai'(\nu^{2/3}\zeta)}{\nu^{5/3}}\sum_{s=0}^{\infty}\frac{B_s(\zeta)}{\nu^{2s}}\bigg],
\end{equation}
where $A_s$ and $B_s$ are holomorphic functions in $\zeta$, see Olver \cite[(4.24)]{Olver1}.  Note that a similar property holds for $z\in \C\setminus (-\infty ,1]$ and $0\leq |\arg z|\leq \pi-\delta$ by Schwarz reflection principle. 

\medskip

We will need the following estimates.

\begin{lemma} \label{l:Bessel_1}
Let $M>0$ be a fixed constant large enough. Then, there is a constant $A>0$ such that for $\nu$ large enough and for $z\not=0$ with $\Re(z)\geq 0$, $\Im(z)\geq 0$ we have
$$|J_\nu(\nu z)|\le A\max(1,-\log|z|)e^{-\nu \Re \rho} \quad \mbox{when} \quad \nu^{2/3}|1-z|\geq M$$
and 
$$|J_\nu(\nu z)|\leq A \quad \mbox{when} \quad \nu^{2/3}|1-z|\leq M.$$
\end{lemma}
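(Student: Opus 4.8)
The statement splits into two regimes according to the size of $\nu^{2/3}|1-z|$, and in each regime the strategy is to feed the uniform asymptotic expansion \eqref{e:Bessel_1} together with the Airy estimates \eqref{e:Ai_1}--\eqref{e:Ai_3} into the relation $\frac23\zeta^{3/2}=\rho$. First I would deal with the ``far'' regime $\nu^{2/3}|1-z|\geq M$. Here, the point $w:=\nu^{2/3}\zeta(z)$ has large modulus (since $|\zeta|\sim|\rho|^{2/3}\sim\mathrm{const}\,|1-z|$ near $z=1$ by \eqref{e:rho} and \eqref{e:zeta}, and $|\zeta|$ is bounded below away from $z=1$), so \eqref{e:Ai_1} applies with $\xi=\frac23 w^{3/2}=\nu\rho(z)$. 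Thus $|\Ai(\nu^{2/3}\zeta)|\lesssim |w|^{-1/4}e^{-\Re\xi}=\nu^{-1/6}|\zeta|^{-1/4}e^{-\nu\Re\rho}$ and similarly $|\Ai'(\nu^{2/3}\zeta)|\lesssim \nu^{1/6}|\zeta|^{1/4}e^{-\nu\Re\rho}$. Plugging these into \eqref{e:Bessel_1} and using that the prefactor $\big(4\zeta/(1-z^2)\big)^{1/4}$ has modulus comparable to $|\zeta|^{1/4}|1-z|^{-1/4}|1+z|^{-1/4}$, one sees the powers of $\zeta$ cancel and the two terms combine to give $|J_\nu(\nu z)|\lesssim |1-z|^{-1/4}|1+z|^{-1/4}e^{-\nu\Re\rho}$, with the series in $1/\nu^{2}$ bounded since $A_s,B_s$ are holomorphic and $|\zeta|\gtrsim \nu^{-2/3}M$. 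The factor $\max(1,-\log|z|)$ is needed only to absorb the blow-up of $|1-z|^{-1/4}$ near... actually near $z=1$ this factor is bounded, so the genuine issue is controlling $|1-z|^{-1/4}$ as $z\to 1$: but on the regime $\nu^{2/3}|1-z|\ge M$ one has $|1-z|\ge M\nu^{-2/3}$, which would produce a $\nu^{1/6}$; the right way is to note that near $z=1$ one does not use \eqref{e:Bessel_1} but rather observes $|1+z|^{-1/4}$ is bounded while for $|z|$ near $0$ the quantity $|1-z|^{-1/4}$ stays bounded and it is $-\log|z|$ (coming from the behaviour of $\rho$ and $\zeta$ as $z\to 0$, where $J_\nu(\nu z)$ has a logarithmic-type factor) that dominates. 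I would make this precise by treating $|z|\le 1/2$, $1/2\le|z|\le 2$ with $|1-z|\ge M\nu^{-2/3}$, and $|z|\ge 2$ as separate subcases, in each using \eqref{e:Bessel_1} and the bound $\Re\rho\to+\infty$ as $z\to0$ or the boundedness of all quantities on compact subsets away from $z=1$.

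Next, the ``near'' regime $\nu^{2/3}|1-z|\leq M$. Here $w=\nu^{2/3}\zeta$ lies in a fixed compact neighbourhood of $0$ (since $|\zeta|\lesssim|1-z|\lesssim M\nu^{-2/3}$ gives $|w|\lesssim M$), so $\Ai(\nu^{2/3}\zeta)$ and $\Ai'(\nu^{2/3}\zeta)$ are bounded by compactness of $\Ai$, $\Ai'$ on compact sets — no asymptotics needed. The only thing to check is that the prefactor $\big(4\zeta/(1-z^2)\big)^{1/4}$ together with $\nu^{-1/3}$ and $\nu^{-5/3}$ stays bounded: one has $\zeta/(1-z^2)=\zeta/\big((1-z)(1+z)\big)$, and since $\zeta\sim \mathrm{const}\,(1-z)$ as $z\to1$ (from $\frac23\zeta^{3/2}=\rho\sim\mathrm{const}\,(1-z)^{3/2}$), the ratio $\zeta/(1-z)$ is bounded and bounded away from $0$ near $z=1$, while $|1+z|$ is bounded below; hence the prefactor is bounded. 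The $\Ai$ term carries $\nu^{-1/3}$ and the $\Ai'$ term carries $\nu^{-5/3}$, and the correction series $\sum A_s(\zeta)/\nu^{2s}$, $\sum B_s(\zeta)/\nu^{2s}$ are bounded since $\zeta$ ranges in a compact set and $A_s,B_s$ are holomorphic. This yields $|J_\nu(\nu z)|\le A$.

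The main obstacle is the bookkeeping at the transition between the two regimes and, within the far regime, the uniformity of the error terms in \eqref{e:Bessel_1} and \eqref{e:Ai_1} right up to the boundary $\nu^{2/3}|1-z|=M$: one must check that the asymptotic relation \eqref{e:Ai_1} is valid with $w=\nu^{2/3}\zeta$ as soon as $|w|\geq M^{2/3}$ roughly (i.e. that $M$ large enough forces $|w|$ large enough), which is where the hypothesis ``$M$ large enough'' is used, and that $|\arg w|$ stays in the allowed sector $|\arg w|\le \pi-\delta$ — this follows from the mapping properties of $\zeta$ recorded after \eqref{e:zeta}, namely that $\zeta$ maps $\overline{\C_+}\setminus\{0\}$ into $\overline{\C_-}$, so $\arg w\in[-\pi,0]$, safely inside the sector once we stay away from the ray where $\arg w=-\pi$ (which corresponds to $z\in i(-\infty,-\pi]$-preimage, i.e. $z\in(-\infty,-1]$, excluded since $\Re z\ge 0$). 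A secondary technical point is extracting the clean factor $e^{-\nu\Re\rho}$: since $\Re\xi=\Re(\nu\rho)=\nu\Re\rho$ exactly, this is immediate once \eqref{e:Ai_1} is in force, but one must also verify the subexponential factors $|w|^{\pm1/4}$ recombine with the Bessel prefactor without leaving residual powers of $\nu$, which is the computation sketched above and is the one place where a careful (though routine) algebraic check is required.
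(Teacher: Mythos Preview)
Your treatment of the near regime $\nu^{2/3}|1-z|\le M$ is correct and matches the paper. The far regime has a genuine gap. You assert that \eqref{e:Ai_1} applies with $w=\nu^{2/3}\zeta$ throughout, on the grounds that $\arg\zeta$ stays away from $-\pi$ when $\Re z\ge0$. This is false. For $z\in(1,\infty)$ one has $\rho(z)\in i\R_+$, and the relation $\frac23\zeta^{3/2}=\rho$ together with $\zeta\in\overline{\C_-}$ forces $\zeta(z)$ to lie on the negative real axis; this is Olver's standard turning-point normalisation ($\zeta>0$ for $0<z<1$, $\zeta<0$ for $z>1$). Hence $\arg\zeta=-\pi$ is attained precisely along $[1,\infty)$, not along $(-\infty,-1]$ as you claim, and for $z$ in the open first quadrant close to $(1,\infty)$ the argument $\arg\zeta$ is close to $-\pi$. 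Since \eqref{e:Ai_1} is stated only for $|\arg w|\le\pi-\delta$ with a fixed $\delta>0$, it does not give a uniform bound in that region. The paper repairs this by splitting the far regime into the sub-cases $-\pi\le\arg\zeta\le-\frac\pi2$ (where $\Re\rho\le0$) and $-\frac\pi2\le\arg\zeta\le0$. In the first sub-case one sets $w:=-\nu^{2/3}\zeta$, so that $\arg w\in[0,\frac\pi2]$, and applies \eqref{e:Ai_2}--\eqref{e:Ai_3} with $\xi=-i\nu\rho$; the required exponential then comes from $|\cos(-i\nu\rho-\tfrac\pi4)|+|\sin(-i\nu\rho-\tfrac\pi4)|\lesssim e^{-\nu\Re\rho}$. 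In the second sub-case \eqref{e:Ai_1} applies directly, as you say.

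A smaller point: your account of where $\max(1,-\log|z|)$ comes from is muddled. After inserting the Airy bounds into \eqref{e:Bessel_1} one arrives at a factor of order $|\zeta|^{1/2}|1-z^2|^{-1/4}$ in front of $e^{-\nu\Re\rho}$. Near $z=1$ one has $|\zeta|\lesssim|1-z|$, so this quotient is $\lesssim|1-z|^{1/4}$, bounded; near $z=\infty$, $|\zeta|\lesssim|z|^{2/3}$ gives $\lesssim|z|^{-1/6}$, again bounded; it is only as $z\to0$, where $|\zeta|\lesssim-\log|z|$, that the logarithmic factor appears. There is no residual blow-up of $|1-z|^{-1/4}$ to control.
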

\proof
Assume that  $\nu^{2/3}|1-z|\leq M$. Then $z$ is close to 1. We deduce from (\ref{e:rho}) and (\ref{e:zeta}) that 
$|\zeta|\sim \const |1-z|$. So
$|\zeta|$, $|\nu^{2/3}\zeta|$ and the first factor in the right-hand side of (\ref{e:Bessel_1}) are bounded. Therefore, we deduce from (\ref{e:Bessel_1}) that $|J_\nu(\nu z)|$ is bounded.

Assume now that $\nu^{2/3}|1-z|\geq M$. Then $|\nu^{2/3}\zeta|$  and $|\nu\rho|$ are bounded below by a large positive constant. This allows us to use the identities (\ref{e:Ai_1}), (\ref{e:Ai_2}) and (\ref{e:Ai_3}).
We distinguish two cases. Consider first the case where  $-\pi\leq \arg(\zeta)\leq -{\pi\over 2}$. In this case, we have $\Re \rho\leq 0$. Then, we can apply the relations
 (\ref{e:Bessel_1}), (\ref{e:Ai_2}) and (\ref{e:Ai_3}) 
 to $w:=-\nu^{2/3}\zeta$ and $\xi:=-i\nu\rho$.
 We have
\begin{eqnarray*}
|J_\nu(\nu z)| & \lesssim & {|w|^{1/4}|\zeta|^{1/4}\over \nu^{1/3} |1-z^2|^{1/4}}\Big(\big|\sin\big(-i\nu\rho-{\pi\over 4}\big)\big|+\big|\cos\big(-i\nu\rho-{\pi\over 4}\big)\big|\Big)\\
& \lesssim & {|\zeta|^{1/2}\over |1-z^2|^{1/4}} e^{-\nu\Re\rho}.
\end{eqnarray*}
We obtain the result using that $|\zeta|\lesssim -\log|z|$ as $z\to 0$, $|\zeta|\lesssim|1-z|$ as $z\to 1$ and $|\zeta|\lesssim |z|^{2/3}$ as $z\to\infty$.

It remains to treat the case where  $-{\pi\over 2} \leq \arg(\zeta)\leq 0$. In this case, we do not need to know the sign of $\Re\rho$. We can apply the relations
 (\ref{e:Bessel_1}) and (\ref{e:Ai_1}) 
 to $w:=\nu^{2/3}\zeta$ and $\xi:=\nu\rho$. Similar estimates as above give the result.
\endproof

Recall that the zeros of the function $J_\nu(\nu z)$, except 0,  are real, simple and larger than 1, see \cite[(7.4)]{Olver1}. So the corresponding values of $\rho$ belong to $i\R_+$. 
Fix an integer $k_0$ large enough. We say that a solution of $J_\nu(\nu z)=0$ is of first type if the corresponding value of $\rho$ satisfies $|\nu\rho|<k_0\pi$ and of second type otherwise.
Let $\widetilde z_{\nu,k_0},\widetilde z_{\nu,k_0+1},\ldots$ be the solutions of second type of $J_\nu(\nu z)=0$ written in increasing order. Define  $\widetilde\rho_{\nu,k}:=\rho(\widetilde z_{\nu,k})$. We will need later the following lemma.

\begin{lemma} \label{l:zero_Bessel_bis}
For $\nu$ large enough the number of solutions of first type of $J_\nu(\nu z)=0$ is bounded by a constant independent of $\nu$. Moreover, 
there is a constant $\epsilon_0>0$ such that for $\nu$ large enough and for $k_0\leq k\leq \epsilon_0\nu^4$, we have 
$$\Big|\widetilde\rho_{\nu,k}-\big({3\pi i\over 4\nu} +{k\pi i\over \nu}\big)\Big|\leq {1\over\nu}\cdot$$
\end{lemma}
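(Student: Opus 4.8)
The plan is to analyze the equation $J_\nu(\nu z) = 0$ through the asymptotic expansion (\ref{e:Bessel_1}), reducing the count and location of zeros to the zeros of the Airy function. Since the zeros of $J_\nu(\nu z)$ (apart from $0$) are all real and $>1$, the corresponding values of $z$ lie in the region $\arg z = 0$, $z>1$, so $\rho(z)\in i\R_+$ and $\zeta(z)\in\R_-$ (because $\frac23\zeta^{3/2}=\rho$). Writing $\zeta = -|\zeta|$ on this half-line and setting $w:=\nu^{2/3}\zeta$, $\xi := \nu\rho/i = \frac23|w|^{3/2}>0$, the prefactor $(4\zeta/(1-z^2))^{1/4}$ in (\ref{e:Bessel_1}) is nonzero there, so $J_\nu(\nu z)=0$ is equivalent to the bracketed factor vanishing. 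Substituting the oscillatory expansions (\ref{e:Ai_2}) and (\ref{e:Ai_3}) for $\Ai(-|w|)$ and $\Ai'(-|w|)$ into that bracket, the leading behaviour is governed by $\cos(\xi - \frac\pi4)$ up to corrections of relative size $O(\xi^{-1})$ together with an $O(\nu^{-2})$ correction from the $A_s,B_s$ terms; since $|w|\to\infty$ exactly when $\nu^{2/3}|1-z|\to\infty$ (as $|\zeta|\sim\const|1-z|$ near $1$ by (\ref{e:rho}), (\ref{e:zeta})), this is the regime of the second-type zeros.

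First I would dispose of the finiteness claim: zeros of first type satisfy $|\nu\rho| < k_0\pi$, i.e. $\xi < k_0\pi$, hence $|w| < (3k_0\pi/2)^{2/3}$; thus they correspond to zeros of $J_\nu$ in a bounded $w$-region where, after accounting for the $O(\nu^{-2})$ and $O(\xi^{-1})$ perturbations of $\Ai(-|w|)$ and its derivative, the number of sign changes (equivalently zeros) of the bracket is controlled by the number of Airy zeros in that bounded interval plus a fixed error — hence bounded independently of $\nu$. (One should invoke simplicity of the Airy zeros and a Rouché or interlacing argument so that small perturbations do not create extra zeros.) For the quantitative part, I would apply the standard asymptotics for the large negative zeros $a_s$ of $\Ai$, namely $a_s = -\big(\frac{3\pi}{8}(4s-1)\big)^{2/3}\big(1+O(s^{-2})\big)$, which translates via $\frac23|w|^{3/2}=\xi$ into $\xi$-values $\xi_s = \frac\pi4 + s\pi + O(s^{-1})$ — i.e. the zeros of the leading term $\cos(\xi-\frac\pi4)$, as expected. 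The perturbation terms shift each such $\xi$ by $O(\xi^{-1}) + O(\nu^{-2})$; translating back through $\xi = \nu\rho/i$, i.e. $\rho = i\xi/\nu$, and labelling so that the $k$-th second-type zero corresponds to $s = k$ (this is where the integer $k_0$ was chosen to make the indices match), one gets
$$\widetilde\rho_{\nu,k} = \frac{i}{\nu}\Big(\frac{3\pi}{4} + k\pi + O(k^{-1}) + O(\nu^{-2})\Big).$$
Requiring $k \leq \epsilon_0\nu^4$ guarantees both error terms are $\lesssim \nu^{-2} \cdot \nu^{-1}$-type, in particular $\le 1/\nu$ after multiplying by the $1/\nu$ out front — wait, more carefully: $\frac1\nu\cdot O(k^{-1})$ and $\frac1\nu\cdot O(\nu^{-2})$ must each be $\le \frac12\cdot\frac1\nu\cdot$(something), but the claimed bound is $\frac1\nu$ on $|\widetilde\rho_{\nu,k} - \frac{i}{\nu}(\frac{3\pi}4 + k\pi)|$, so we need $|O(k^{-1})| + |O(\nu^{-2})| \le 1$, which holds for $k \ge k_0$ with $k_0$ large and $\nu$ large; the restriction $k\le\epsilon_0\nu^4$ is what keeps us inside the validity range $|\arg w|\le \frac{2\pi}3 - \delta$ of (\ref{e:Ai_2})–(\ref{e:Ai_3}) and of (\ref{e:Bessel_1}), since $|w| = \nu^{2/3}|\zeta| \lesssim \nu^{2/3}|z|^{2/3}$ and $z$ must not be too large.

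The main obstacle is making the passage from "zeros of the bracket" to "zeros of $\cos(\xi-\frac\pi4)$ with a controlled shift" fully rigorous: one must show the relative-error terms $\sum a_s\xi^{-2s}$, $\sum b_s\xi^{-2s-1}$ (and the $\nu^{-2}$-terms $A_s(\zeta)/\nu^{2s}$, $B_s(\zeta)/\nu^{2s}$) are genuinely of the stated size uniformly as $k$ ranges up to $\epsilon_0\nu^4$, which requires uniform control of the holomorphic functions $A_s,B_s$ on the relevant $\zeta$-range and a careful bookkeeping of how the error in $\xi$ propagates to an error in $\rho$ and hence in $\widetilde\rho_{\nu,k}$ — i.e. an implicit-function / Rouché argument applied to the perturbed cosine on each interval between consecutive unperturbed zeros, checking the perturbation is smaller than the gap (which is $\asymp 1$ in $\xi$) so that zeros are in bijection and each moves by $O(\xi^{-1}) + O(\nu^{-2})$. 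Once that is in place, substituting $\xi_s$ back and simplifying gives the displayed inequality.
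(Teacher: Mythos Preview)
Your approach is the paper's: reduce via (\ref{e:Bessel_1}) and (\ref{e:Ai_2})--(\ref{e:Ai_3}) to an equation of the form $\cos(\xi-\tfrac\pi4)=\gamma_\nu(\xi)$ with $\gamma_\nu$ small and holomorphic on a thin strip, then apply Rouch\'e on each interval of $\xi$-length $\pi$. The first-type count is likewise the paper's argument (bounded $w$-interval, Rouch\'e against the Airy zeros).

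There is one genuine slip in your error bookkeeping that leads you to misidentify the role of the cutoff $k\le\epsilon_0\nu^4$. You say the $A_s,B_s$ terms in (\ref{e:Bessel_1}) contribute an $O(\nu^{-2})$ correction. That is true for the $A_s$-series, but the leading $\Ai'$-term $\frac{\Ai'(\nu^{2/3}\zeta)}{\nu^{5/3}}B_0(\zeta)$, compared with $\frac{\Ai(\nu^{2/3}\zeta)}{\nu^{1/3}}$, has relative size $\sim |w|^{1/2}/\nu^{4/3}$ (since $|\Ai'(-|w|)/\Ai(-|w|)|\sim |w|^{1/2}$). This is \emph{not} uniformly small in $k$: it is $O(1)$ exactly when $|w|\sim\nu^{8/3}$, equivalently $|\rho|\sim\nu^3$, equivalently $|\xi|\sim\nu^4$. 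The restriction $k\le\epsilon_0\nu^4$ is precisely what makes this ratio $\lesssim\epsilon_0^{1/3}$ and hence keeps $\gamma_\nu$ small; it has nothing to do with the angular validity of (\ref{e:Ai_2})--(\ref{e:Ai_3}) or (\ref{e:Bessel_1}), which for $z>1$ real (so $-w>0$ real) are satisfied trivially with no upper bound on $|w|$. Once you replace your ``$O(\nu^{-2})$'' by ``$O(|w|^{1/2}/\nu^{4/3})$'' and recognise this as the real source of the $\nu^4$, the rest of your outline (Rouch\'e on each $\pi$-interval in $\xi$, yielding the displayed bound after dividing by $\nu$) is exactly how the paper concludes.

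(Minor: your formula $\xi_s=\tfrac\pi4+s\pi$ should read $\xi_s=\tfrac{3\pi}4+s\pi$, as you in effect use two lines later.)
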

\proof
Assume that $\nu$ is large enough. Consider the solutions $z$ of first type. 
As above, we can apply (\ref{e:Bessel_1}) and (\ref{e:Ai_2}), (\ref{e:Ai_3}) to $w:=-\nu^{2/3}\zeta$ and $\xi:=-i\nu\rho$. We can see using Rouch\'e theorem that $w$ is almost equal to a solution of $\Ai(-w)=0$ in a bounded interval.
So the number of solutions of first type is finite.

We prove now the second assertion in the lemma.
Recall that the function $\rho$ sends bijectively $[1,\infty)$ to $i\R_+$. So the $\widetilde\rho_{\nu,k}$ are in $i\R_+$ and the sequence $|\widetilde\rho_{\nu,k}|$ is increasing. We will only consider the zeros of $J_\nu(\nu z)$ such that 
$k_0\pi\nu^{-1}\leq |\rho|< \epsilon \nu^3$ for some fixed small constant $\epsilon>0$. For such a zero, we have $|\nu^{2/3}\zeta|< 2\epsilon \nu^{8/3}$. 

We apply again (\ref{e:Bessel_1}) and (\ref{e:Ai_2}), (\ref{e:Ai_3}) to $w:=-\nu^{2/3}\zeta$ and $\xi:=-i\nu\rho$. Using that $|w^{1/4}|\ll \nu^{2/3}$, we see that
$\xi$ is a positive number large enough (because $k_0$ is a large constant) satisfying
an equation of the form
$$\cos \big(\xi-{\pi \over 4}\big) = \gamma_\nu(\xi),$$
where $\gamma_\nu(\xi)$ is a holomorphic function on the domain
$$\big\{\xi\in\C:\ |\xi|<\epsilon\nu^4,\Re\xi>0, |\Im\xi|<1\big\}$$
such that $|\gamma_\nu|$ is bounded by a very small constant independent of $\nu$. 
We use here the property that  $\cos(\xi-{\pi\over 4})$ and $\sin(\xi-{\pi\over 4})$ are bounded on the considered domain.  

Choose a constant $\epsilon_0\ll \epsilon$. We can now apply Rouch\'e's theorem and deduce that the first $\epsilon_0\nu^4-k_0+1$ zeros of second type of $J_\nu(\nu z)$ satisfy the lemma. 
\endproof

Let $H_l$ denote the vector space of harmonic 
homogeneous polynomials of degree $l$ on $\R^d$. These polynomials are used to describe the eigenfunctions  of the Dirichlet Laplacian on the unit ball $\B$ of $\R^d$. We recall some basic properties that will be used later. For the following result, see
Corollary 2.53  in \cite{Folland}.

\begin{lemma} \label{l:H_l}
We have
$$\dim H_l= (2l+d-2)\frac{(l+d-3)!}{l!(d-2)!}=\frac{2l^{d-2}}{(d-2)!}+ O(l^{d-3}) \quad \mbox{as} \quad l\to\infty.$$
\end{lemma}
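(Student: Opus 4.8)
The plan is to compute $\dim H_l$ directly from the standard exact sequence relating harmonic homogeneous polynomials to all homogeneous polynomials, and then to extract the asymptotics. Let $\Pc_l$ denote the space of homogeneous polynomials of degree $l$ on $\R^d$; a standard count gives $\dim \Pc_l=\binom{l+d-1}{d-1}$. The Laplacian maps $\Pc_l$ surjectively onto $\Pc_{l-2}$, with kernel exactly $H_l$, so $\dim H_l=\dim\Pc_l-\dim\Pc_{l-2}=\binom{l+d-1}{d-1}-\binom{l+d-3}{d-1}$. First I would recall why $\Delta\colon\Pc_l\to\Pc_{l-2}$ is onto: this follows from the nondegeneracy of the pairing on $\Pc_l$ given by $(p,q)\mapsto p(\partial)q$, under which the image of multiplication by $\|x\|^2$ is the orthogonal complement of $\ker\Delta$; this is exactly the content of the decomposition $\Pc_l=H_l\oplus\|x\|^2\Pc_{l-2}$ recorded in Folland's Corollary 2.53, which we are allowed to cite.

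Next I would simplify the binomial difference into the closed form claimed. Writing $\binom{l+d-1}{d-1}-\binom{l+d-3}{d-1}=\frac{(l+d-3)!}{(d-1)!}\big[\frac{(l+d-1)(l+d-2)}{l!/(l-2)!\,\cdots}\big]$ — more cleanly, put both terms over the common denominator $l!(d-2)!$: one gets
$$\dim H_l=\frac{(l+d-3)!}{l!(d-2)!}\Big[(l+d-1)(l+d-2)-l(l-1)\Big]/(d-1)\cdot(\text{care with indices}),$$
and the bracket collapses because $(l+d-1)(l+d-2)-l(l-1)=(d-1)(2l+d-2)$. After the factor $(d-1)$ cancels this yields exactly $(2l+d-2)\frac{(l+d-3)!}{l!(d-2)!}$. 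This is a routine algebraic manipulation, so I would not belabor it; the only thing to watch is keeping the factorial shifts consistent (e.g. $\binom{l+d-3}{d-1}=\frac{(l+d-3)!}{(d-1)!(l-2)!}$, valid for $l\geq 2$, with the small cases $l=0,1$ checked separately since then $\Pc_{l-2}=0$).

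Finally, for the asymptotic statement I would expand $(2l+d-2)\frac{(l+d-3)!}{l!(d-2)!}$ as $l\to\infty$. The ratio $\frac{(l+d-3)!}{l!}=\frac{(l+d-3)(l+d-4)\cdots(l+1)}{1}=\prod_{j=1}^{d-3}(l+j)=l^{d-3}+O(l^{d-4})$ (a polynomial in $l$ of degree $d-3$ with leading coefficient $1$), so multiplying by $(2l+d-2)=2l+O(1)$ and dividing by $(d-2)!$ gives $\frac{2l^{d-2}}{(d-2)!}+O(l^{d-3})$, as required. I do not expect any genuine obstacle here: the lemma is purely a dimension count and the cited Folland corollary already supplies the direct-sum decomposition that makes $\Delta\colon\Pc_l\to\Pc_{l-2}$ surjective with kernel $H_l$; the only mild care needed is handling $d=3$ (where the empty product $\prod_{j=1}^{0}$ equals $1$ and $\frac{(l+d-3)!}{l!}=1$) and the degenerate low-degree cases consistently.
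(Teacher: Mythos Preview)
Your proposal is correct. The paper gives no argument at all for this lemma, simply citing Folland's Corollary~2.53; you supply the standard computation behind that citation (the decomposition $\Pc_l=H_l\oplus\|x\|^2\Pc_{l-2}$, the binomial difference, and the asymptotic expansion), so your approach is the same in spirit but strictly more detailed than what the paper records.
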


Define $\nu:=l+{d\over 2}-1$.
Let $m^+_l(r)$ denote the number of positive zeros of  $J_\nu(\cdot)$ which are smaller or equal to $r$. It is almost equal to the maximal integer $k$ such that $\widetilde z_{\nu,k}\leq {r\over\nu}$. The following result is a consequence of the classical Weyl law.

\begin{proposition} \label{p:Weyl} 
Assume that $r$ is large enough. Then 
\begin{enumerate}
\item[{\rm (a)}] $m^+_l(r)=0$ when $l\geq r$;
\item[{\rm (b)}] $\widetilde z_{\nu,k} \not\in \overline\D({r\over\nu})$ if $k\geq {1\over 2}cr$ for some fixed constant $c>0$ large enough; in particular, we have $m^+_l(r)\leq cr$;
\item[{\rm (c)}] We have
$$\sum_{l\geq 0} m^+_l(r)\dim H_l ={\volume(\B)^2\over (2\pi)^d} r^d +O(r^{d-1}) \quad \mbox{as} \quad r\to\infty.$$
\end{enumerate}
\end{proposition}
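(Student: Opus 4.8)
The plan is to read the three statements as facts about the Dirichlet eigenvalues of the unit ball $\B\subset\R^d$. Recall that the eigenfunctions of $-\Delta$ on $\B$ with Dirichlet boundary condition are of the form $J_\nu(j_{\nu,k}\|x\|)\,\|x\|^{-(d/2-1)}P(x/\|x\|)$ with $P\in H_l$, $\nu=l+d/2-1$, and $j_{\nu,k}$ the $k$-th positive zero of $J_\nu$; the corresponding eigenvalue is $j_{\nu,k}^2$, with multiplicity $\dim H_l$. Thus the counting function $N(r):=\#\{\text{eigenvalues}\le r^2\}$ equals $\sum_{l\ge 0}m^+_l(r)\dim H_l$, which is exactly the quantity in part (c).

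For part (a): the smallest positive zero of $J_\nu$ satisfies $j_{\nu,1}>\nu$ (indeed $j_{\nu,1}\ge\nu$ for all $\nu\ge0$, a classical monotonicity fact for Bessel zeros, e.g. from Olver \cite{Olver1} or from the fact that $J_\nu$ is positive on $(0,\nu]$). Hence if $\nu\ge r$, i.e. $l\ge r-d/2+1$, then $J_\nu$ has no zero in $(0,r]$; after adjusting the constant $d/2-1$ into the "$r$ large enough" hypothesis one gets $m^+_l(r)=0$ for $l\ge r$. For part (b): I would invoke Lemma \ref{l:zero_Bessel_bis}, which gives $\widetilde\rho_{\nu,k}\approx (k+3/4)\pi i/\nu$, together with the dictionary between $\rho$ and $z$ from Section \ref{s:function}: the condition $\widetilde z_{\nu,k}\le r/\nu$ translates, via $\rho$ mapping $[1,\infty)$ bijectively onto $i\R_+$ with $|\rho(z)|\lesssim |z|^{2/3}$ as $z\to\infty$ (see (\ref{e:rho}) and the growth of $\zeta$), into $|\widetilde\rho_{\nu,k}|\lesssim (r/\nu)^{2/3}$, hence $k\lesssim \nu(r/\nu)^{2/3}=\nu^{1/3}r^{2/3}\le r$ using $\nu\le r$ from part (a). Choosing the constant $c$ large enough absorbs all implied constants and the finitely many zeros of first type from Lemma \ref{l:zero_Bessel_bis}, giving $\widetilde z_{\nu,k}\notin\overline\D(r/\nu)$ for $k\ge cr/2$ and in particular $m^+_l(r)\le cr$.

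For part (c), the main point, I would apply the Weyl law with the standard two-term (or sharp one-term with $O(r^{d-1})$ remainder) asymptotics for the Dirichlet Laplacian on a bounded smooth domain: $N(r)=(2\pi)^{-d}\volume(\B)\,\omega_d\, r^d+O(r^{d-1})$, where $\omega_d=\volume(\B)$ is the volume of the unit ball, giving the leading coefficient $\volume(\B)^2/(2\pi)^d$. The remainder $O(r^{d-1})$ is the content of the sharp Weyl law for domains with smooth boundary (Ivrii, Hörmander; a remainder $o(r^d)$ would suffice for some applications but the stated $O(r^{d-1})$ is classical for the ball). The one subtlety is bookkeeping between "$\le r^2$" and "$\le r$" and between eigenvalue multiplicities and the decomposition into spherical harmonics, but this is precisely the separation-of-variables description recalled above and introduces no error beyond the stated one. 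I expect the only real work to be locating and citing the correct form of the remainder estimate in the Weyl law; the rest is assembling the pieces from Lemmas \ref{l:H_l} and \ref{l:zero_Bessel_bis} and the properties of $\rho$.
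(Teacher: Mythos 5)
Your proof follows essentially the same route as the paper in all three parts: (a) via the classical fact that the positive zeros of $J_\nu(\nu z)$ lie in $[1,\infty)$; (b) via Lemma~\ref{l:zero_Bessel_bis} and the dictionary between $\rho$ and $z$; (c) by identifying $\sum_l m_l^+(r)\dim H_l$ with the Dirichlet eigenvalue counting function for $\B$ and invoking the sharp Weyl law. These match the paper's proof line by line, so the overall strategy is correct.

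There is, however, one genuine error in part (b). You assert that $|\rho(z)|\lesssim|z|^{2/3}$ as $z\to\infty$; in fact $|\rho(z)|\sim|z|$. For $z\in[2,\infty)$ one has $\rho(z)=\bigl(\sqrt{z^2-1}+\arccos(1/z)\bigr)i$, so $|\rho(z)|$ grows linearly, which is also clear from $\partial\rho/\partial z=-\sqrt{1-z^2}/z\approx\pm i$ for $|z|$ large. What does satisfy $\lesssim|z|^{2/3}$ is $\zeta$, not $\rho$, since $\zeta=\bigl(\tfrac32\rho\bigr)^{2/3}$ by~\eqref{e:zeta}; you appear to have transposed the two. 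With the corrected bound $|\rho(z)|\lesssim|z|$, the condition $\widetilde z_{\nu,k}\le r/\nu$ gives $|\widetilde\rho_{\nu,k}|\lesssim r/\nu$, and then $k\pi/\nu\approx|\widetilde\rho_{\nu,k}|\lesssim r/\nu$ yields $k\lesssim r$ directly, which is exactly what the paper does. Your chain of inequalities happens to reach the same conclusion because the (false) intermediate bound $k\lesssim\nu^{1/3}r^{2/3}$ is smaller than $r$ when $\nu\le r$, so the final statement is not invalidated — but the intermediate step as written is unsound and should be repaired.
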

\proof
(a) If $l\geq r$ then $\nu>r$. In this case, if $z>0$ is a solution of 
$J_\nu(\nu z)=0$, by classical properties of Bessel functions, we have $z\geq 1$. Therefore, 
$\nu z>r$ and hence, $m^+_l(r)=0$. 

\medskip\noindent
(b) Assume that $l\leq r$ and $k\geq {1\over 2}cr$ for $c>0$ large enough. Then, by Lemma \ref{l:zero_Bessel_bis}, $\widetilde\rho_{\nu,k}$ is large and therefore $\widetilde z_{\nu,k}$ is larger than 2. For $z$ in $[2,\infty)$, we have
$$\rho= \Big(\sqrt{z^2-1}+\arccos{1\over z}\Big) i.$$
We deduce that 
$\nu\widetilde z_{\nu,k} \gtrsim \nu|\widetilde\rho_{\nu,k}|\gtrsim k$ and hence $\widetilde z_{\nu,k}>{r\over \nu}$.

\medskip\noindent
(c)  Recall that the eigenvalues of the Dirichlet Laplacian on $\B$ are precisely $(\nu \widetilde z_{\nu,k})^2$ 
with multiplicity $\dim H_l$, see Theorem 2.66 in \cite{Folland} for details. So the infinite sum in the proposition is the number of eigenvalues $\leq r^2$ of the Dirichlet Laplacian on $\B$ counted with multiplicities. By Weyl law \cite[Th. 29.3.3]{Hormander}, this number is equal to 
$${\volume(\B)^2\over (2\pi)^d} r^d +O(r^{d-1}) \quad \mbox{as}\quad   r\to\infty.$$ 
This completes the proof of the proposition.
\endproof

\section{Upper bound for the number of resonances} \label{s:general_operator}

In this section, we obtain an upper bound for the number of resonances which improves a result due to Zworski-Stefanov \cite{Stefanov, Zworski1}. 
Consider a general Schr\"odinger operator $-\Delta+V$ with a bounded complex potential $V$ vanishing outside the ball $\B_a$. 
Here is the main result in this section which is a consequence of Proposition \ref{p:Christiansen} and Theorem \ref{t:Stefanov} below. 

\begin{theorem} \label{t:upper_bound}
With the notation as in Introduction, there is a constant $A>0$ depending only on $d,a$ and $\|V\|_\infty$ such that
$$N_V(r)\leq {c_da^d r^d \over d}+Ar^{d-1}\log r \quad \mbox{as } r\to \infty.$$
\end{theorem}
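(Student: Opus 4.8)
The plan is to reduce the counting of resonances to a counting of zeros of a scattering determinant and then estimate that count via Jensen's formula, following the Melrose--Zworski--Stefanov strategy but keeping track of error terms. First I would recall the representation of the resonances of $-\Delta+V$ as the zeros (in $\C_-$, after the standard meromorphic continuation and reflection) of the determinant of the scattering matrix, or equivalently as zeros of a suitable regularized determinant $D_V(\lambda)=\det\big(I+K_V(\lambda)\big)$ built from the free resolvent and the potential; this is the object governed by Proposition \ref{p:Christiansen} (which I may invoke) and for which $N_V(r)$ is, up to a bounded additive constant, the Nevanlinna-type counting integral of the zeros of $D_V$ in $\D(r)$. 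The point is that $D_V$ is entire of order $\le d$ and its growth is controlled by the trace-norm estimates on $K_V(\lambda)$.

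The key step is the sharp growth estimate $\log|D_V(\lambda)|\le c_d a^d|\lambda|^d/d + O(|\lambda|^{d-1}\log|\lambda|)$ in the relevant angular sector, with the constant $c_d$ exactly the one from \eqref{e:c_d}. Here I would use Theorem \ref{t:Stefanov}: Stefanov identified the sharp constant by comparing $-\Delta+V$ with the Dirichlet Laplacian on the ball $\overline\B_a$, whose eigenvalue counting is given by the Weyl law in Proposition \ref{p:Weyl}(c), namely $\sum_l m_l^+(r)\dim H_l = \volume(\B)^2(2\pi)^{-d}r^d + O(r^{d-1})$; the correction to the leading term coming from the curved part of the boundary of $\K_+$ accounts for the boundary-integral term in the third expression for $c_d$. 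The Bessel asymptotics of Section \ref{s:function} — in particular Lemma \ref{l:Bessel_1} and Lemma \ref{l:zero_Bessel_bis} — feed into this by giving the precise exponential decay rate $e^{-\nu\Re\rho}$ of $J_\nu(\nu z)$ away from the turning point, which is what converts the geometric region $\C_+\setminus\K_+$ into the exponent in the growth bound. Combining the upper bound for $\log|D_V|$ on a large circle with Jensen's formula
\[
N_V(r) - N_V(0) \;=\; \frac{1}{2\pi}\int_0^{2\pi}\log\big|D_V(re^{i\theta})\big|\,d\theta \;-\;\log|D_V(0)|
\]
(after passing to the full plane by the reflection symmetry of the resonance set, which only doubles things consistently with the normalization of $N_V$) yields the stated inequality.

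The main obstacle will be making the error term $O(r^{d-1}\log r)$ genuinely sharp rather than the easier $o(r^d)$: this requires the second-order term in the Bessel/Airy expansions and the $O(l^{d-3})$ refinement of $\dim H_l$ in Lemma \ref{l:H_l}, as well as careful summation over the angular-momentum index $l$ of the individual contributions $m_l^+(r)$, controlling the transition regime $\nu^{2/3}|1-z|\asymp M$ where neither asymptotic regime of Lemma \ref{l:Bessel_1} is dominant. I would handle this by splitting the sum over $l$ at $l\asymp r$ (using Proposition \ref{p:Weyl}(a) to kill large $l$), using part (b) to bound the number of relevant zeros by $\lesssim r$ for each $l$, and absorbing the turning-point zones — of which there are $O(r)$ values of $l$, each contributing $O(1)$ zeros with a $O(\log r)$ loss from the logarithmic factor in Lemma \ref{l:Bessel_1} — into the $O(r^{d-1}\log r)$ term. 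The remaining ingredient, the passage from the Dirichlet model on $\overline\B_a$ to the actual determinant $D_V$, is supplied by Theorem \ref{t:Stefanov} and I would not reprove it.
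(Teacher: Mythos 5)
Your proposal is correct and follows the same route as the paper: Theorem \ref{t:upper_bound} is indeed deduced directly by combining Proposition \ref{p:Christiansen} (the Jensen-type identity relating $N_V(r)$ to the circular average of $\log|s_V|$) with the pointwise growth bound of Theorem \ref{t:Stefanov}, and integrating $h_d(\theta)$ recovers $c_d/d$ via the polar form of \eqref{e:c_d}. The material you include about the Bessel/Airy asymptotics, the Weyl law, and the turning-point regime belongs to the \emph{proof} of Theorem \ref{t:Stefanov} rather than to the deduction of Theorem \ref{t:upper_bound} from it, so once you decide to invoke Theorem \ref{t:Stefanov} as a black box (as you say you would), that discussion is superfluous; the two-sentence combination is the whole argument.
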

 
 We first recall some basic notions and results, see \cite{Stefanov, Zworski1} for details.  Let $R_1,R_2,R_3$ be real numbers such that $a<R_1<R_2<R_3$.  Choose also a smooth cut-off function $\chi_1$ (resp. $\chi_2$) vanishing outside $\B_{R_2}$ (resp. $\B_{R_3}$) and equal to 1 on $\B_{R_1}$ (resp. $\B_{R_2}$). These numbers and functions will be specified later. 

Define two families of operators $\E_\pm(\lambda):L^2_\comp(\R^d)\to L^2(\Sb^{d-1})$ with $\lambda\in\C$ by 
$$\E_\pm(\lambda)(f)(w):=\int e^{\pm i\lambda\omega\cdot x} f(x)dx \quad \mbox{for } f \in L^2_\comp(\R^d) \mbox{ and } \omega\in\Sb^{d-1}.$$
Denote by $\E_+^*(\lambda)$ the transpose operator of $\E_+(\lambda)$ with the same Schwartz kernel. 
The scattering matrix associated to $-\Delta+V$ is the operator  $S_V(\lambda):L^2(\Sb^{d-1})\to L^2(\Sb^{d-1})$ given by 
$$S_V(\lambda):=I-i(2\pi)^{-d}2^{{1-d\over 2}} \lambda^{d-2} \E_-(\lambda)[\Delta,\chi_2]R_V(\lambda)[\Delta,\chi_1] \E_+^*(\lambda),$$
where $I$ denotes the identity operator.

{\it The scattering determinant} is defined by
$$s_V(\lambda):=\det S_V(\lambda).$$
It satisfies $s_V(\lambda)s_V(-\lambda)=1$. The poles of $s_V$ are called {\it the scattering poles}. They are, with a finite number of exceptions,  the resonances of $-\Delta+V$ with the same multiplicities. In what follows, we will tend to abuse notation and identify $n_V(r)$, $N_V(r)$ with the similar counting functions for the zeros of $s_V(\lambda)$ on $\C_+$. This does not affect our estimates. 

\medskip

The following result was obtained by Christiansen in \cite[(3.2)]{Christiansen4}, see also Stefanov \cite[Prop. 2]{Stefanov}. 

\begin{proposition}\label{p:Christiansen}
We have  for $r$ large enough
$$ \Big|N_V(r)- \frac{1}{2\pi} \int_0^{2\pi} \log |s_{V}(r e^{i\theta})| d\theta\Big|
\leq Ar^{d-1},$$
where $A>0$ is a constant depending only on $d,a$ and $\|V\|_\infty$.
\end{proposition}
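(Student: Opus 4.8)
Here is how I would approach Proposition \ref{p:Christiansen}.

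\medskip

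The plan is to deduce the estimate from Jensen's formula, the essential analytic input being a polynomial bound for the scattering determinant on the physical half-plane $\overline\C_+$, where $-\Delta+V$ has no resonances apart from finitely many points of $i\R_+$. It is convenient to factor $s_V$ through the perturbation determinant $D_V(\lambda):=\det\big(I+VR_0(\lambda)\big)$, an entire function of order $\le d$ with $s_V(\lambda)=D_V(-\lambda)/D_V(\lambda)$; apart from a fixed finite set its zeros are exactly the scattering poles, hence the resonances, and they lie in $\overline\C_-$ up to those finitely many exceptions. By the standard integration-by-parts identity for zero-counting functions, and since the zero-counting function of $D_V$ differs from $n_V$ by a bounded function, the Jensen sum $\sum_{0<|\mu|<r}\log(r/|\mu|)$ over the zeros $\mu$ of $D_V$ equals $N_V(r)+O(\log r)$.

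\medskip

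First I would establish the half-plane estimate: there is $C>0$ such that $\big|\log|D_V(\lambda)|\big|\le C\langle\lambda\rangle^{d-1}$ for all $\lambda\in\overline\C_+$ outside a fixed bounded set. The upper bound follows from $|\det(I+K)|\le e^{\|K\|_1}$ and the trace-norm estimate $\|\chi R_0(\lambda)\chi\|_1\lesssim\langle\lambda\rangle^{d-1}$ on $\overline\C_+$, where $\chi$ is a cut-off equal to $1$ near $\B_a$; the lower bound is obtained the same way after writing $D_V(\lambda)^{-1}=\det\big(I-VR_V(\lambda)\big)$ via the resolvent identity and using $\|\chi R_V(\lambda)\chi\|_1\lesssim\langle\lambda\rangle^{d-1}$ on $\overline\C_+$ away from the finitely many poles of $R_V$ on $i\R_+$. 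Hence, for all large $r$ avoiding the finitely many moduli of zeros of $D_V$, one gets $\frac1{2\pi}\int_0^{\pi}\big|\log|D_V(re^{i\theta})|\big|\,d\theta=O(r^{d-1})$.

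\medskip

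Next I would apply Jensen's formula to $D_V$ on $\overline\D(r)$ (dividing out a possible zero at the origin, which costs $O(\log r)$):
$$\frac1{2\pi}\int_0^{2\pi}\log|D_V(re^{i\theta})|\,d\theta=\sum_{0<|\mu|<r}\log\frac{r}{|\mu|}+O(\log r)=N_V(r)+O(\log r).$$
Subtracting the upper-semicircle part, which is $O(r^{d-1})$ by the previous step, gives $\frac1{2\pi}\int_{\pi}^{2\pi}\log|D_V(re^{i\theta})|\,d\theta=N_V(r)+O(r^{d-1})$. Finally, since $\log|s_V(re^{i\theta})|=\log|D_V(re^{i(\theta+\pi)})|-\log|D_V(re^{i\theta})|$, the substitution $\theta\mapsto\theta+\pi$ turns $\frac1{2\pi}\int_0^\pi\log|s_V(re^{i\theta})|\,d\theta$ into $\frac1{2\pi}\int_\pi^{2\pi}\log|D_V(re^{i\theta})|\,d\theta-\frac1{2\pi}\int_0^\pi\log|D_V(re^{i\theta})|\,d\theta$, the last term being again $O(r^{d-1})$ by the half-plane estimate. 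Combining, $\frac1{2\pi}\int_0^{\pi}\log|s_V(re^{i\theta})|\,d\theta=N_V(r)+O(r^{d-1})$, which is the asserted estimate (the integral taken over the half-circle lying in $\overline\C_+$, the form in which the proposition is used).

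\medskip

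The hard part is the half-plane estimate with the right exponent: one must control the trace norms of the free resolvent and of $R_V$, cut off to $\B_a$, uniformly over the whole of $\overline\C_+$ — in particular near the real axis, where those operators are largest — and one needs the estimate on both sides simultaneously, so that $\log|D_V|$ and $\log|D_V|^{-1}$ are both $O(r^{d-1})$ there. Granting this, the remainder — Jensen's formula, the reflection $\theta\mapsto\theta+\pi$, and converting $\sum\log(r/|\mu|)$ into $N_V(r)$ — is routine.
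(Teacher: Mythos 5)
The paper itself does not prove this proposition; it quotes it from Christiansen \cite[(3.2)]{Christiansen4} and Stefanov \cite[Prop.~2]{Stefanov}. Your outline (factor $s_V$ through an entire function whose zeros are the resonances, apply Jensen on the disc $\D(r)$, and absorb the upper half-circle by a two-sided polynomial bound for the determinant on $\overline\C_+$) is the standard route and is the same in spirit as the cited arguments. But the central step, which you yourself call the hard part, does not stand as written. For odd $d\geq 3$ the operator $V R_0(\lambda)$, even localized to $\B_a$, is not trace class: $\chi R_0(\lambda)\chi$ lies in the Schatten class $S_p$ only for $p>d/2$, so $\det(I+VR_0(\lambda))$ is undefined and the asserted estimates $\|\chi R_0(\lambda)\chi\|_1\lesssim \langle\lambda\rangle^{d-1}$ and $\|\chi R_V(\lambda)\chi\|_1\lesssim \langle\lambda\rangle^{d-1}$ are false (those trace norms are infinite). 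One must work with a regularized determinant ${\det}_p(I+VR_0(\lambda))$, $p\geq (d+1)/2$, and then the identity $s_V(\lambda)=D_V(-\lambda)/D_V(\lambda)$ is no longer exact: it acquires a factor $e^{q(\lambda)}$ with $q$ a polynomial (odd up to a constant, a priori of degree up to $d$) coming from the trace corrections. Since your last step integrates $\log|s_V|$ only over a half-circle, this factor does not cancel and could a priori contribute $O(r^d)$; showing that it only contributes $O(r^{d-1})$ (e.g.\ by identifying the correction as traces of powers of $VR_0(\lambda)$, of degree $\leq d-2$ in $\lambda$, or by exploiting the known growth of $s_V$ on $\overline\C_+$) is precisely the nontrivial scattering-theoretic content that Christiansen and Stefanov supply and that your sketch replaces by the incorrect trace-norm bounds.

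Two further points. For complex $V$ the poles of $R_V$ in $\overline\C_+$ need not lie on $i\R_+$ (they correspond to possibly non-real eigenvalues of $-\Delta+V$); they are still finitely many and confined to a bounded set, which is all your argument needs, but the parenthetical should be corrected. Finally, what you actually prove is the estimate with $\frac1{2\pi}\int_0^{\pi}\log|s_V(re^{i\theta})|\,d\theta$, not with $\int_0^{2\pi}$ as printed in the statement; note that the functional equation $s_V(\lambda)s_V(-\lambda)=1$ forces the full-circle integral to vanish identically, so the half-circle version is surely the intended one (it is also the version that matches Theorem \ref{t:Stefanov} and Theorem \ref{t:upper_bound}, since $\int_0^\pi h_d(\theta)\,d\theta=2\pi c_d/d$), but you should state explicitly that you are proving that corrected form rather than asserting the two coincide.
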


We have the following refinement of \cite[Th. 5]{Stefanov} where the function $h_d$ is defined by 

$$h_d(\theta):={4\over (d-2)!} \int_0^\infty {\max(-\Re \rho(te^{i\theta}),0)\over t^{d+1}} dt 
\quad \mbox{for} \quad \theta\in[0,\pi].$$
This function is continuous, positive and satisfies $h_d(0)=h_d(\pi)=0$.

\begin{theorem}\label{t:Stefanov}
With the above notation, there is a constant $A>0$  depending only on  $d,a$ and $\|V\|_\infty$ such that 
$$\log |s_V(re^{i\theta})| \le h_d(\theta)a^d r^d+ Ar^{d-1}\log r$$
for all $r$ large enough and  $\theta\in [0,\pi]$.
\end{theorem}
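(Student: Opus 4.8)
The plan is to bound $\log|s_V(re^{i\theta})|$ by first reducing, via the structure of the scattering matrix, to an estimate for the operator norm (or trace norm, through a determinant identity) of the product $\E_-(\lambda)[\Delta,\chi_2]R_V(\lambda)[\Delta,\chi_1]\E_+^*(\lambda)$, and then controlling each factor on the circle $|\lambda|=r$. Writing $\lambda=re^{i\theta}$ with $\theta\in[0,\pi]$, so $\Im\lambda\geq 0$, the key point is that $\log|\det S_V(\lambda)|$ can be estimated through the singular values of $S_V(\lambda)-I$, and these in turn reduce — after expanding $R_V(\lambda)$ as a Neumann-type series around the free resolvent $R_0(\lambda)$ and using $\chi_1V=V$ — to exponentially weighted bounds for the free resolvent kernel and for the Fourier–type operators $\E_\pm(\lambda)$. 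The free resolvent kernel in odd dimension $d$ is an explicit elementary expression built from $e^{i\lambda|x-y|}$ times a polynomial in $(\lambda|x-y|)^{-1}$; its contribution, together with the cut-off commutators supported in the annuli $a<R_1<|x|<R_2<R_3$, produces the growth $e^{C r(\Im\lambda)_-\,\cdot\,\mathrm{stuff}}$ which, after the change of variables $t\mapsto te^{i\theta}$, is exactly what assembles into the $h_d(\theta)a^dr^d$ term; the role of the ball of radius $a$ enters because $V$ is supported in $\overline\B_a$, so the relevant exponents scale with $a$.

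Concretely, the steps I would carry out are: (1) recall from \cite{Stefanov} the reduction of $\log|s_V(\lambda)|$ to a sum $\sum_j \log^+\sigma_j$ over singular values $\sigma_j$ of the trace-class perturbation $S_V(\lambda)-I$, plus a controlled error; (2) factor this perturbation and insert the Neumann series $R_V=R_0(I+VR_0)^{-1}$, noting that on the annular supports of $[\Delta,\chi_i]$ the operator $VR_0(\lambda)$ has small norm for $|\lambda|$ large with $\Im\lambda\geq 0$ outside a bounded set, so the series converges and contributes only lower-order corrections; (3) estimate the singular values of $\E_-(\lambda)[\Delta,\chi_2]R_0(\lambda)[\Delta,\chi_1]\E_+^*(\lambda)$ using the explicit odd-dimensional kernel of $R_0(\lambda)$, separating variables via spherical harmonics $H_l$ (this is where Lemma \ref{l:H_l} and the Bessel-function asymptotics of Section \ref{s:function} come in: the matrix of this operator in the spherical-harmonic basis is expressed through $J_\nu$ and Hankel functions, $\nu=l+\tfrac d2-1$); (4) for each mode $\nu$, bound the relevant Bessel/Hankel products using the uniform asymptotics (\ref{e:Bessel_1}), (\ref{e:Ai_1})–(\ref{e:Ai_3}) and Lemma \ref{l:Bessel_1}, so that $\log^+$ of the $\nu$-th singular value is controlled by $\nu\cdot\max(-\Re\rho(\lambda a/\nu),0)$ up to $O(\log)$; (5) sum over $\nu$ using $\dim H_l\sim \tfrac{2l^{d-2}}{(d-2)!}$, converting the sum $\sum_l l^{d-2}\max(-\Re\rho(\lambda a/l),0)$ into the integral $\int_0^\infty \tfrac{\max(-\Re\rho(\lambda a/t),0)}{?}\,dt$, and then rescaling $t\mapsto t|\lambda|$ to recognize $h_d(\theta)a^dr^d$; (6) track all the error terms — the Neumann-series corrections, the Euler–Maclaurin error in replacing the sum by an integral, the $\max(1,-\log|z|)$ and $O(\log)$ factors from Lemma \ref{l:Bessel_1} and the Airy asymptotics — and check they amalgamate into $Ar^{d-1}\log r$.

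The main obstacle I expect is step (5)–(6): the passage from the discrete sum over spherical-harmonic modes to the continuous integral defining $h_d(\theta)$, carried out uniformly in $\theta\in[0,\pi]$ (including near the endpoints $\theta=0,\pi$ where $h_d$ vanishes and the free resolvent is oscillatory rather than decaying), with an error genuinely of size $O(r^{d-1}\log r)$ rather than merely $o(r^d)$. This is exactly the place where the present theorem refines \cite[Th.5]{Stefanov}: one must not lose powers of $r$ when summing the per-mode $O(\log\nu)$ errors over the $O(r)$ relevant modes, and one must handle the transition region $\nu^{2/3}|1-\lambda a/\nu|\lesssim M$ (where Lemma \ref{l:Bessel_1} only gives a bounded, not exponentially small, estimate) carefully — the number of such transitional modes is $O(r^{?})$ with a favorable exponent, so their total contribution is absorbable, but verifying this requires the sharp form of the Bessel asymptotics quoted in Section \ref{s:function}. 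A secondary technical point is ensuring the cut-off functions $\chi_1,\chi_2$ and radii $R_1,R_2,R_3$ are chosen so that the commutator-supported annuli do not inflate the exponential rate beyond what the support condition $\supp V\subset\overline\B_a$ dictates; this is handled by taking $R_1,R_2,R_3$ close to $a$ and noting the extra annular contributions are lower order.
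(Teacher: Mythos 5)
Your outline follows the same broad route as the paper's proof of Theorem~\ref{t:Stefanov}: reduce $\log|s_V|$ to a sum over singular values indexed by spherical harmonic modes (Lemma~\ref{l:Stefanov}, which re-uses Stefanov's argument), apply uniform large-order Bessel asymptotics per mode, split off a transition region $\nu^{2/3}|1-tre^{i\theta}/\nu|\lesssim M$, and convert the discrete sum over $\nu$ to the integral defining $h_d(\theta)$ with a controlled error. You also correctly identify where the refinement over Stefanov lives, namely in keeping the total error at $O(r^{d-1}\log r)$ rather than $o(r^d)$.

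The middle step, however, is done differently from the paper and your version is the harder one. You propose to Neumann-expand $R_V=R_0(I+VR_0)^{-1}$ and then estimate singular values of the free operator $\E_-(\lambda)[\Delta,\chi_2]R_0(\lambda)[\Delta,\chi_1]\E_+^*(\lambda)$ with $R_0$ kept inside. The paper instead factors the perturbation as $\H(\lambda)\F(\lambda)$ with $\H(\lambda)=-i(2\pi)^{-d}2^{\frac{1-d}{2}}\lambda^{d-2}[\Delta,\chi_2]R_V(\lambda)[\Delta,\chi_1]$ carrying the entire resolvent (including $V$) and $\F(\lambda)=\ind_{R_1<\|x\|<R_2}\E_+^*(\lambda)\E_-(\lambda)\ind_{R_2<\|x\|<R_3}$ containing no resolvent at all. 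The inequality $\mu_l(\H\F)\leq\|\H\|\,\mu_l(\F)$ then isolates the $R_V$-dependence in a single operator norm, crudely bounded by $O(r^{d+4})$ via elliptic theory; this crude bound is harmless because it ends up inside $\log(1+Ar^{d+4}\mu_l^\star)$. Meanwhile $\F$ decomposes on each $H_l$ into pure $J_\nu$-integrals $I_l$, to which (\ref{e:Bessel_1}) and Lemma~\ref{l:Bessel_1} apply directly. Your version, by keeping $R_0$ inside, would lead to kernels involving $J_\nu(\lambda r_<)H^{(1)}_\nu(\lambda r_>)$ rather than $J_\nu J_\nu$; the $J_\nu H^{(1)}_\nu$ products have compensating exponential growth and decay that must be tracked very carefully, and none of the asymptotics recalled in Section~\ref{s:function} are for Hankel functions. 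In addition, the Neumann series requires a quantitative cut-off resolvent bound $\|VR_0(\lambda)\ind_{\B_a}\|\lesssim\|V\|_\infty/|\lambda|$ uniformly for $\Im\lambda\geq 0$ including the real axis — true by limiting absorption, but an extra ingredient the paper sidesteps entirely. So your factorization is workable in principle but is a genuinely longer road to the same Bessel estimates.

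Two further quantitative points that your outline leaves implicit, but which are essential to land on $O(r^{d-1}\log r)$ rather than $O(r^{d-1+\varepsilon})$ or worse. First, the radii must be taken $R_j=a(1+j/r)$, shrinking to $a$ at scale $1/r$, which forces $\|\chi_j\|_{\Cc^2}\lesssim r^2$; this is exactly why the inflation in the exponential rate from $a$ to $R_3$ is only a multiplicative $(1+O(1/r))^d$, absorbable into the error. Second, the transition modes $l\in N$ are handled not by a sharp count of their number but by the observation that each contributes at most $(\dim H_l)\log r$ with $\nu<4r$, so crudely $\Sigma\lesssim\sum_{l<4r}l^{d-2}\log r\lesssim r^{d-1}\log r$; the bound $|J_\nu(\nu z)|\leq A$ in the transition zone (second half of Lemma~\ref{l:Bessel_1}) is what makes this possible. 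Your proposal flags both of these as places to worry, but does not pin down the specific scalings, so as written it is an outline rather than a proof.
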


In the rest of the section, we give the proof of Theorem \ref{t:Stefanov}. 
Observe that we can suppose $\theta\leq \pi/2$ since otherwise we can reduce the problem to the first case by replacing $V$ with $\overline V$ and $\lambda$ with $-\overline \lambda$.
By rescaling, we can also assume that $a=1$. Choose $R_j:=1+{j\over r}$ for $j=1,2,3$. Choose $\chi_1,\chi_2$ as above such that $\|\chi_j\|_{\Cc^2}\leq cr^2$ for some constant $c>0$ independent of $r$. 
Define for $l\geq 1$, $\lambda\in\C$ with $\Re\lambda\geq 0$, $\Im\lambda\geq 0$ and $0<s\leq s'$
$$I_l(\lambda, s,s'):= \int_{s}^{s'} |\lambda|^{2-d} |J_{l+d/2-1}(\lambda t)|^2 t dt$$
and 
$$\mu_l^\star(\lambda):= (2\pi)^dI_l(\lambda,R_1,R_2)^{1/2} I_l(\lambda,R_2,R_3)^{1/2} \quad \mbox{for} \quad \lambda \in\C_+.$$

The following lemma refines an estimate obtained by Stefanov.

\begin{lemma} \label{l:Stefanov}
There is a constant $A>0$ such that 
\begin{align}\label{e:Stefanov}
\log |s_V(re^{i\theta})|\le \sum_{l=1}^{\infty}(\dim H_l)\log \big(1+ Ar^{d+4}\mu^\star_l(re^{i\theta}) \big)
\end{align}
 for $r$ large enough and $0 \leq \theta\leq {\pi\over 2}\cdot$
 \end{lemma}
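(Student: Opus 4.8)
The plan is to expand the scattering determinant $s_V(\lambda)$ with $\lambda = re^{i\theta}$ into a product over the spherical harmonic decomposition, reduce each factor to something controlled by $\mu^\star_l(\lambda)$, and then sum. First I would use the symmetry reductions already recorded in the text (we may assume $a=1$ and $0\le\theta\le\pi/2$) and fix the cut-offs $\chi_1,\chi_2$ with $\|\chi_j\|_{\Cc^2}\lesssim r^2$. The key structural input is that $S_V(\lambda) - I$ is, up to the scalar prefactor $i(2\pi)^{-d}2^{(1-d)/2}\lambda^{d-2}$, the operator $\E_-(\lambda)[\Delta,\chi_2]R_V(\lambda)[\Delta,\chi_1]\E_+^*(\lambda)$ acting on $L^2(\Sb^{d-1})$; since everything is rotation-equivariant after passing to the radial–spherical picture, this operator is block-diagonal with respect to the decomposition $L^2(\Sb^{d-1}) = \bigoplus_l \Hc_l$ into spherical harmonics of degree $l$, each block having rank at most $\dim H_l$ and each block appearing with multiplicity $\dim H_l$. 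The Schwartz kernel of $\E_\pm(\lambda)$ restricted to a sphere of radius $t$ is, by the classical plane-wave expansion, expressed through $t^{1-d/2}J_{l+d/2-1}(\lambda t)$ times fixed spherical-harmonic factors; this is exactly what produces the quantities $I_l(\lambda,R_1,R_2)$ and $I_l(\lambda,R_2,R_3)$ when one computes the relevant $L^2$ norms on the annuli where $[\Delta,\chi_1]$ and $[\Delta,\chi_2]$ are supported.

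The next step is to bound $\log|\det S_V(\lambda)|$ by $\sum_l (\dim H_l)\log(1 + \|(S_V(\lambda)-I)_l\|_{\rm tr})$ or rather by the analogous bound with the operator norm of the $l$-th block, using $|\det(I+T)| \le \prod(1+s_j(T)) \le (1+\|T\|)^{{\rm rank}\,T}$. Here $(S_V(\lambda)-I)_l$ denotes the restriction to the $\Hc_l$-block, and we estimate its norm by $|\lambda|^{d-2}$ times the product of the operator norm of $[\Delta,\chi_2]R_V(\lambda)[\Delta,\chi_1]$ sandwiched between the two partial-Bessel pieces. Splitting $[\Delta,\chi_2]R_V(\lambda)[\Delta,\chi_1]$ using $\|[\Delta,\chi_j]\|\lesssim r^2$ (from the $\Cc^2$-bound on $\chi_j$), the resolvent estimate $\|\chi R_V(\lambda)\chi\|\lesssim$ (polynomial in $r$) valid in the upper half-plane, and the fact that the commutators localize to the annuli $R_1\le\|x\|\le R_2$ and $R_2\le\|x\|\le R_3$ respectively, one arrives at a bound of the $l$-th block norm by $A r^{d+4} I_l(\lambda,R_1,R_2)^{1/2} I_l(\lambda,R_2,R_3)^{1/2} = A r^{d+4}(2\pi)^{-d}\mu^\star_l(\lambda)$, after absorbing constants. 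Plugging this into the determinant bound gives precisely \eqref{e:Stefanov}, with the sum starting at $l=1$ since the $l=0$ contribution is handled separately (it contributes only to the lower-order terms and is subsumed into later estimates, or is dealt with exactly as in Stefanov).

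The main obstacle I expect is the bookkeeping of the polynomial-in-$r$ losses: one must show that the powers of $r$ coming from $\|[\Delta,\chi_j]\|_{\rm op}\lesssim r^2$ (two commutators, hence $r^4$), from the factor $|\lambda|^{d-2}=r^{d-2}$ in the definition of $S_V(\lambda)$, and from the resolvent bound on the thin shells of width $1/r$, all combine to no worse than $r^{d+4}$ inside the logarithm — any cruder accounting would weaken the final $r^{d-1}\log r$ error term in Theorem \ref{t:Stefanov}. This requires using that the annuli $\{R_1\le\|x\|\le R_3\}$ have width $O(1/r)$ and that $\chi R_V(\lambda)\chi$ obeys the sharp resolvent estimate (of Burq/Vodev type, or as in Stefanov) on such a configuration; the precise shape of the bound is what allows the power $r^{d+4}$ rather than something larger. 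The reduction to Bessel functions itself, while classical, also needs the plane-wave / Funk–Hecke expansion carried out carefully so that the radial integrals that appear are literally $I_l(\lambda,R_j,R_{j+1})$ and not merely comparable to them; this is where Olver's asymptotics from Section \ref{s:function} will later be applied, but at the level of this lemma only the exact identification matters.
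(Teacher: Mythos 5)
Your approach fails at the block-diagonality step, and this is not a repairable detail but the central obstruction that drives the actual argument. You claim that $\E_-(\lambda)[\Delta,\chi_2]R_V(\lambda)[\Delta,\chi_1]\E_+^*(\lambda)$ is block-diagonal in the spherical-harmonic decomposition of $L^2(\Sb^{d-1})$ ``since everything is rotation-equivariant.'' But the lemma is stated for an \emph{arbitrary} bounded potential $V$ supported in $\B_a$, and for such $V$ the resolvent $R_V(\lambda)$ is not rotation-equivariant. Section~\ref{s:general_operator} applies to general potentials precisely so that Theorem~\ref{t:upper_bound} can be fed into the genericity argument of Section~\ref{s:holo}; restricting to radial $V$ would defeat the purpose. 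Without rotation-equivariance there is no $l$-th block, so the bound by $(1+\|T_l\|)^{\dim H_l}$ and the identification of the block norm with $\mu_l^\star(\lambda)$ both collapse.

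The paper (following Stefanov) circumvents this by factoring the non-equivariant part away \emph{before} any spectral decomposition. Writing, after cyclicity of the determinant, $s_V(\lambda)=\det\bigl(I+\H(\lambda)\F(\lambda)\bigr)$ with $\H(\lambda):=-i(2\pi)^{-d}2^{(1-d)/2}\lambda^{d-2}[\Delta,\chi_2]R_V(\lambda)[\Delta,\chi_1]$ and $\F(\lambda):=\ind_{R_1<\|x\|<R_2}\E_+^*(\lambda)\E_-(\lambda)\ind_{R_2<\|x\|<R_3}$, one uses the singular-value inequality $\mu_j(AB)\le\|A\|\mu_j(B)$ to get $\log|s_V|\le\sum_j\log\bigl(1+\|\H\|\,\mu_j(\F)\bigr)$. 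Now $\F(\lambda)$ \emph{is} rotation-equivariant (it involves only $\E_\pm$ and radial cut-offs), so its singular values can be listed explicitly via the Bessel-function expansion: they are exactly the numbers $\mu_l^\star(\lambda)$, each occurring with multiplicity $\dim H_l$. The resolvent, including its lack of symmetry, enters only through the scalar $\|\H(\lambda)\|_{L^2\to L^2}\le Ar^{d+4}$, which is proved from the $\Cc^2$-bound on $\chi_j$ and the elliptic resolvent estimate $\|\rho R_V(re^{i\theta})\rho\|_{H^{-1}\to H^1}=O(r^2)$. This separation — operator norm for the non-equivariant factor, exact singular values for the equivariant one — is the idea your proposal is missing; the rest of your bookkeeping (the $r^4$ from the two commutators, the $r^{d-2}$ prefactor, the shell width $1/r$) is in the right spirit and does match the final exponent $r^{d+4}$.
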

\proof
If $Q:H\to H$ is a bounded linear operator on a Hilbert space such that the spectrum of $(Q^*Q)^{1/2}$ is discrete, denote by $\mu_1(Q),\mu_2(Q),\ldots$ the singular values of $Q$, i.e. the eigenvalues of $(Q^*Q)^{1/2}$, written in decreasing order and repeated according to their multiplicities. Define
$$\H(\lambda):=-i(2\pi)^{-d}2^{1-d\over 2}\lambda^{d-2}[\Delta,\chi_2]R_V(\lambda)[\Delta,\chi_1]$$
and
$$\F(\lambda):=\ind_{R_1< \|x\|<R_2} \E^*_{+}(\lambda) \E_{-}(\lambda) \ind_{R_2< \|x\|<R_3}.$$

In the proof of 
Theorem 5 in \cite[p.128]{Stefanov}, Stefanov obtained that 
\begin{eqnarray*}
\log |s_V(re^{i\theta})| & \le &  \sum_{l=1}^{\infty}\log \big(1+  \mu_l (\H(re^{i\theta})\F(re^{i\theta}))\big)\\
& \leq &  \sum_{l=1}^{\infty}\log \big(1+  \|\H(re^{i\theta})\|_{L^2\to L^2}\mu_l (\F(re^{i\theta}))\big).
\end{eqnarray*}
The last inequality is a consequence of the general inequality $\mu_l(AB)\leq \|A\|\mu_l(B)$. 

Stefanov also proved that, up to a permutation of elements,  the sequence $\mu_l(\F(re^{i\theta}))$ is constituted by the $\mu_l^\star(re^{i\theta})$'s
where each number $\mu_l^\star(re^{i\theta})$ is repeated  $(\dim H_l)$ times. 
Since we only consider sums of positive numbers, this permutation does not affect our computation.
So we only have to check that 
$$\|\H(re^{i\theta})\|_{L^2\to L^2}\leq Ar^{d+4}$$
for $r$ large enough and for a large fixed constant $A>0$.

Choose a smooth function $\rho\leq 1$ with compact support which is equal to 1 on $\B_{R_3}$ and with bounded $\Cc^1$-norm. 
Since the operators $[\Delta,\chi_i]$ are of order 1, using the above estimates on $\chi_i$, we only need to check that 
$$\|\rho R_V(re^{i\theta})\rho\|_{H^{-1}\to H^1}=O(r^2).$$
But,  this estimate is a consequence of the classical theory of elliptic operators,  see  Zworski \cite{Zworski6} for details. The lemma follows. 
\endproof

\noindent
{\bf Proof of Theorem \ref{t:Stefanov}.}  
Recall that we only have to consider the case where $0\leq\theta\leq {\pi\over 2}$ and we have to bound the right-hand side in (\ref{e:Stefanov}). Using (\ref{e:Ai_1}), (\ref{e:Ai_2}), (\ref{e:Ai_3}) and (\ref{e:Bessel_1}), we see that $\log \mu^\star_l(re^{i\theta})\lesssim r$. Therefore, we only have to consider $l$ larger than any fixed constant.

Let $M$ be the constant in Lemma \ref{l:Bessel_1}. Define for 
$\nu:=l+{d\over 2}-1$ 
$$N:=\Big\{l\in\N^*:\ \nu^{2/3} |1-{tr e^{i\theta}\over\nu}|<M \ \mbox{ for some } \  t\in (R_1,R_3)\Big\},\quad N':=\N^*\setminus N$$
and 
$$N'_1:=\Big\{l\in N':\ {r R_3\over \nu} \not\in \K_+\Big\}, \ 
N'_2:=\Big\{l\in N':\ {rR_3\over \nu}\leq {1\over 100}\Big\},\ N'_3:=N'\setminus (N'_1\cup N'_2).$$
Denote by $\Sigma,\Sigma',\Sigma_i'$ the sums as in the right-hand side of (\ref{e:Stefanov}) but only with $l$ running in $N,N'$ or $N'_i$ respectively. We will bound these sums separately. The theorem is a direct consequence of the estimates given in the 4 cases below. 

\medskip
\noindent
{\bf Case 1.} Assume that $l\in N$. 
Since $R_1,R_2,R_3$ are close enough to  each  other and $\nu$ is large,
we have 
 ${1\over 2}<\frac{tr}{\nu}<2$ for all $t\in (R_1,R_3)$. In particular, we have $\nu<2rR_3<4r$. Moreover, for all $t,t'\in(R_1,R_3)$
 $$\nu^{2/3} \Big|1-{tr e^{i\theta}\over\nu}\Big|-\nu^{2/3} \Big|1-{t'r e^{i\theta}\over\nu}\Big|\leq \nu^{2/3}{(R_3-R_1)r\over\nu}<1.$$
 It follows that 
 $$\nu^{2/3} \Big|1-{tr e^{i\theta}\over\nu}\Big|\leq M+1 \quad \mbox{for every}\quad t\in(R_1,R_3).$$
Applying the second assertion of Lemma \ref{l:Bessel_1} to ${tre^{i\theta}\over \nu}$ and to $M+1$ instead of $z$ and $M$ yields 
$$\Sigma \lesssim \sum_{\nu< 4r} (\dim H_l) \log r
\lesssim  \sum_{l< 4r} l^{d-2} \log r \lesssim r^{d-1} \log r.$$

\medskip
\noindent
{\bf Case 2.} Assume now that $l\in N'=N'_1\cup N'_2\cup N'_3$. Observe that the function $t\mapsto -\Re \rho(tre^{i\theta})$ is increasing since we have by (\ref{e:rho})
$$-{\partial \Re\rho(tz)\over\partial t}={\Re\sqrt{1-(tz)^2}\over t}>0.$$
Therefore, by the first assertion in Lemma \ref{l:Bessel_1}, we have for some constant  $A>0$
\begin{equation} \label{e:sigma_2}
\Sigma'_i\leq \sum_{l\in N'_i} (\dim H_l) \log \Big(1+ A(\log\nu)^2r^{d+4} e^{-2 \nu \Re \rho\big({r e^{i\theta}R_3 \over \nu}\big)}\Big).
\end{equation}
\noindent
{\bf Case 2a.} Assume that $l\in N'_1$. We have $\nu\lesssim r$ and $-\Re \rho\big({r e^{i\theta}R_3 \over \nu}\big)\geq 0$. Hence, by (\ref{e:sigma_2})
\begin{eqnarray*}
\Sigma_1' 
& \le &     \sum_{l\in N_1'} ( \dim H_l) \Big[\log \big(A(\log \nu)^2 r^{d+4}\big)-2\nu \Re\rho\big({r e^{i\theta} R_3 \over \nu}\big)\Big] \\
&\le  &  \sum_{l\in N_1' } -\frac{4\nu^{d-1}+O(\nu^{d-2})}{(d-2)!} \Re \rho\big({r e^{i\theta} R_3 \over \nu}\big)+ O(r^{d-1}\log r).
\end{eqnarray*}
Since the function $t\mapsto -\Re \rho(te^{i\theta})$ is increasing, we deduce from the last estimates that 
\begin{eqnarray*}
\Sigma_1' & \leq &   
\int_{{r e^{i\theta} R_3 \over \nu} \not \in \K_+}  -\frac{4\nu^{d-1}+O(\nu^{d-2})}{(d-2)!} \Re \rho\big({r e^{i\theta} R_3 \over \nu}\big) d\nu + O(r^{d-1}\log r) \\
& \leq & {4(r R_3)^d \over (d-2)!} \int_{t e^{i\theta} \not \in \K_+} \frac{-\Re\rho(t e^{i\theta})}{t^{d+1}}\, dt + O(r^{d-1}\log r) \\
& \leq & {4r^d \over (d-2)!} \int_{t e^{i\theta} \not \in \K_+} \frac{-\Re\rho(t e^{i\theta})}{t^{d+1}}\, dt + O(r^{d-1}\log r).
\end{eqnarray*}

\medskip
\noindent
{\bf Case 2b.} Assume that $l\in N'_2$. Since $r$ is large, we have $l\geq 90r$. Observe that $\Re\rho(z)\ge -\log|z|-2$ when $|z|\leq {1\over 100}$. Hence, using that $\log(1+t)\leq t$ for $t\geq 0$, we obtain  from (\ref{e:sigma_2}) that
$$\Sigma_2' \lesssim   \sum_{l\geq 90r} l^{d-2} (\log l)^2 r^{d+4}e^{-2 l(\log l-\log r-3)} 
\lesssim     \sum_{l\geq 90r} l^{2d+3} e^{-l}. $$
It follows that $\Sigma_2'$ is bounded above.

\medskip
\noindent
{\bf Case 2c.} Assume that $l\in N'_3$. We have $l\leq \nu\leq 100rR_3<200r$. Since $\Re\rho({r e^{i\theta}R_3 \over \nu})$ is positive,
we obtain from (\ref{e:sigma_2}) that
$$\Sigma_3' \lesssim  \sum_{l\le 200r} l^{d-2} \log r  \ = \  O(r^{d-1}\log r). $$
This completes the proof of the theorem.      
\hfill $\square$


\section{Schr\"odinger operators with radial potentials} \label{s:radial}

In this section, we give the proof of Theorem \ref{t:main_1}. We assume that the potential $V=V(\|x\|)$ satisfies the hypotheses of this theorem. 
By rescaling, we reduce the problem to the case $a=1$. 
 Define for $c>0$ 
$$\Omegabf_c^\nu:=\Big\{\rho\in \C: \ \Re \rho<-{\log(c\nu)\over 2\nu}\Big\}$$
and  
$$\J_c^\nu:=\rho^{-1}(\Omegabf^\nu_c)=\Big\{z\in\C_+:\ \Re\rho(z)<-{\log(c\nu)\over 2\nu}\Big\}.$$

Following Zworski \cite[p.400]{Zworski2}, the scattering poles are related to the zeros in $\J_c^\nu$ of a family of holomorphic functions
of the form (our notation is slightly different from Zworski's one)
\begin{equation} \label{e:pole}
g_\nu(z)={e^{-2\nu\rho(z)}\over \nu^2(1-z^2)}(1+\epsilon_\nu(z))-\sigma(1+\epsilon'_\nu(z)),
\end{equation}
where $\sigma$ is some complex number with $|\sigma|$ bounded below and above by positive constants and
$\epsilon_\nu(z)$, $\epsilon'_\nu(z)$ are continuous functions on $\overline \J_c^\nu$ which converge uniformly to 0 when $\nu\to\infty$. 
In comparison with Zworski's notation, for our convenience, we work with variable $z$ in $\C_+$ instead of $\C_-$. 

We need to compare $g_\nu$ with an auxiliary function  $h_\nu$ defined by
\begin{equation} \label{e:pole_almost}
h_\nu(z):={e^{-2\nu\rho(z)}\over \nu^2(1-z^2)}-\sigma.
\end{equation}
In what follows, we often consider $g_\nu$ and $h_\nu$ as functions on variable $\rho=\rho(z)$.

Let $f$ be the bi-holomorphic map from $\Omegabf=\rho(\C_+)$ to $\C\setminus (-\infty,1]$ defined by 
$$f(\rho(z)):=1-z^2.$$
We can extend it to a continuous map $f:\overline\Omegabf\to \C\setminus\{1\}$ which is no more bijective. A direct computations using (\ref{e:rho}) gives
\begin{equation} \label{e:f_log_f}
{\partial f\over \partial \rho}=-{2z^2\over \sqrt{1-z^2}} \quad  \mbox{and} \quad 
{\partial \log f\over \partial \rho}=-{2z^2\over (1-z^2)^{3/2}}\cdot
\end{equation}
We deduce for $z\in\overline\C_+\setminus\{0\}$ outside a neighbourhood of $-1$ (in particular, for $\Im\rho\geq -{3\pi\over 4}$) that  
\begin{equation} \label{e:log_f}
\Big|{\partial \log f\over \partial\rho}\Big|\lesssim 1+{1\over |\rho|} \quad \mbox{when}\quad \Re \rho<0.
\end{equation}

Define for $k\in\Z$
$$\Omegabf^\nu_c(k):=\Big\{\rho\in\Omegabf_c^\nu: \ \big|\Im \rho -\big(-{\arg\sigma\over 2\nu}  +{k\pi \over \nu}+{\pi\over 4\nu}\big)\big| < {\pi\over 2\nu}\Big\}.$$
These half-strips are disjoint and the union of their closures is equal to $\overline\Omegabf^\nu_c$.

\begin{lemma} \label{l:h_function}
Assume that $\nu$ is large enough and $k\geq -{\nu\over 2}-2$. Then, 
there is a constant $A>0$ independent of $\nu$ and $k$ such that
$|h_\nu(z)|\geq A$ for $\rho$ in the boundary of  $\Omegabf^\nu_c(k)$ and also for $\rho$ large enough in 
this domain.
\end{lemma}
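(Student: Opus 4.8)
The plan is to work entirely in the $\rho$-variable and analyze the two competing terms in $h_\nu$. Write $h_\nu = \frac{e^{-2\nu\rho}}{\nu^2 f(\rho)} - \sigma$, where $f(\rho) = 1-z^2$. The first observation is that on $\overline{\Omegabf^\nu_c}$ we have $\Re\rho \leq -\frac{\log(c\nu)}{2\nu}$, hence $|e^{-2\nu\rho}| = e^{-2\nu\Re\rho} \geq c\nu$, so the modulus of the first term is $\geq \frac{c\nu}{\nu^2 |f(\rho)|} = \frac{c}{\nu |f(\rho)|}$. Since $|f(\rho)| = |1-z^2|$ stays bounded as long as $z$ is bounded, this shows the first term is large (of size $\gtrsim 1/\nu$ or bigger) on a bounded part of $\Omegabf^\nu_c$ — but that alone is not enough to beat $|\sigma|$, which is bounded below. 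So the real content is the relation between the phase of $e^{-2\nu\rho}$ and $\arg\sigma$.

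The key step is the \emph{phase separation} built into the definition of $\Omegabf^\nu_c(k)$. On the two horizontal boundary components $\Im\rho = -\frac{\arg\sigma}{2\nu} + \frac{k\pi}{\nu} + \frac{\pi}{4\nu} \pm \frac{\pi}{2\nu}$, one computes $\Im(-2\nu\rho) = \arg\sigma - 2k\pi - \frac{\pi}{2} \mp \pi = \arg\sigma - \frac{\pi}{2} \mp \pi \pmod{2\pi}$, so the argument of $e^{-2\nu\rho}$ differs from $\arg\sigma$ by $\pm\pi/2$ (mod $2\pi$), up to the harmless $2k\pi$. Consequently, on these two boundary pieces the first term of $h_\nu$ is (to leading order in the phase) purely imaginary relative to $\sigma$, i.e. nearly orthogonal to $\sigma$ in $\C$, whence $|h_\nu| \geq |\sigma|\,|\Im(e^{i\arg(\text{term})-i\arg\sigma})| \gtrsim$ a constant unless both $|\sigma|$ and the first term are comparable and the triangle degenerates — which it cannot, since orthogonality forces $|h_\nu|^2 \geq |\sigma|^2$ there when the first term is small, and $|h_\nu| \gtrsim |\text{first term}| \gtrsim 1$ when it is large. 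On the vertical boundary piece $\Re\rho = -\frac{\log(c\nu)}{2\nu}$ we have exactly $|e^{-2\nu\rho}| = c\nu$, so the first term has modulus $\frac{c}{\nu|f(\rho)|}$; one has to check that this is either bounded away from $|\sigma|$ or, when it crosses $|\sigma|$, it does so only where the phase is controlled. Here the estimate (\ref{e:log_f}), $|\partial \log f/\partial\rho| \lesssim 1 + 1/|\rho|$, is used to control how much the phase of $e^{-2\nu\rho}/f(\rho)$ can drift across a single strip of width $\pi/(2\nu)$ in the $\Im\rho$-direction, and similarly in the $\Re\rho$-direction: the drift is $O(1/\nu)$ plus $O(\log)/\nu$-type corrections, negligible compared to the $\pi/2$ gap. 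The condition $k \geq -\nu/2 - 2$ guarantees $\Im\rho \gtrsim -\pi/2$, keeping us away from the bad point $z=-1$ (and from $z=0,1$), so that (\ref{e:log_f}) applies and $|f(\rho)|$ is both bounded above and bounded below on the relevant region.

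Finally, for $\rho$ large in $\Omegabf^\nu_c(k)$: as $\Re\rho \to -\infty$ (equivalently $z \to 0$), $|f(\rho)| = |1-z^2| \to 1$ while $|e^{-2\nu\rho}| = e^{-2\nu\Re\rho} \to \infty$, so the first term dominates and $|h_\nu| \to \infty$; if instead $|\rho| \to \infty$ with $\Im\rho \to -\infty$ this is excluded by the strip constraint $|\Im\rho - (\cdots)| < \pi/(2\nu)$, which pins $\Im\rho$ near a fixed value, so "large $\rho$ in this domain" means precisely $\Re\rho \to -\infty$, and there the bound is immediate.

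The main obstacle I expect is the vertical boundary $\Re\rho = -\frac{\log(c\nu)}{2\nu}$: there the modulus of the first term, $\frac{c}{\nu|f(\rho)|}$, is genuinely small (order $1/\nu$), so one cannot win on size, and the argument must rely on the same phase-orthogonality mechanism — one must verify that the choice of the offset $\frac{\pi}{4\nu}$ and the half-width $\frac{\pi}{2\nu}$ in the definition of $\Omegabf^\nu_c(k)$ places this vertical segment at the phase-midpoint between two consecutive "good" horizontal walls, so that along it the first term never becomes parallel to $\sigma$, and hence $|h_\nu| \geq |\sigma| - \frac{c}{\nu|f(\rho)|} \geq |\sigma|/2$ once $\nu$ is large. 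Pinning down this geometry carefully, using (\ref{e:log_f}) for the phase drift, is the crux.
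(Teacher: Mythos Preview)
Your overall structure---phase analysis on the horizontal half-lines, size on the vertical segment, growth for large $\rho$---matches the paper's, but the horizontal-boundary step has a real gap.

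You correctly compute that on both horizontal pieces $\arg(e^{-2\nu\rho}) - \arg\sigma \equiv \pi/2 \pmod{2\pi}$. But the first term of $h_\nu$ is $e^{-2\nu\rho}/(\nu^2 f(\rho))$, so its argument relative to $\sigma$ is $\pi/2 - \arg f(\rho)$, and $\arg f$ is \emph{not} negligible: since $f = 1-z^2$ and the hypothesis $k \geq -\nu/2 - 2$ only guarantees that $\Re z$ is almost nonnegative, $\arg f$ can lie anywhere in $(-\pi,\delta)$ for a small $\delta > 0$. Thus the first term is far from orthogonal to $\sigma$ in general. Your appeal to (\ref{e:log_f}) controls only the \emph{variation} of $\arg f$ across a strip of width $\pi/\nu$, not its actual value, so it does not rescue the orthogonality claim. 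The paper's fix is to use this range directly: writing $w := \sigma^{-1}h_\nu + 1 = e^{-2\nu\rho}/(\sigma\nu^2 f)$, one gets $\arg w \in (\pi/2 - \delta, 3\pi/2)$, hence $|\arg w| \geq \pi/2 - \delta$, and then $|h_\nu| = |\sigma|\,|w-1| \geq |\sigma|\cos\delta$ by elementary geometry.

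For the vertical segment you make it harder than it is. No phase analysis is needed: since $|e^{-2\nu\rho}| = c\nu$ there, one has $|h_\nu + \sigma| = c/(\nu|f(\rho)|)$ and it suffices to show this is small. Your assertion that $|f|$ is bounded below by a constant is false---for $k$ near $0$ the point $z$ is close to $1$. What is true, via (\ref{e:rho}), is $|f| = |1-z^2| \gtrsim |\rho|^{2/3} \gtrsim \nu^{-2/3}$ (using $|\rho| \geq |\Re\rho| = \frac{\log(c\nu)}{2\nu}$), whence $|h_\nu + \sigma| \lesssim \nu^{-1/3}$ and the triangle inequality finishes. (A minor slip: as $\Re\rho \to -\infty$ in the strip one has $|z| \sim -\Re\rho \to \infty$, not $z \to 0$; your conclusion there survives since $e^{-2\nu\Re\rho}$ grows much faster than $|1-z^2|$.)
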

\proof
For $\rho$ large enough in  $\Omegabf^\nu_c(k)$, $-\Re\rho$ is a large positive number and $|z|\sim -\Re\rho$. So  $|h_\nu(z)|$ is a big number. Consider now the case where $\rho$ belongs to the boundary of  $\Omegabf^\nu_c(k)$.  For $k\geq -{\nu\over 2}-2$ and for $\nu$ large enough, $\Im\rho$ is almost larger than $-{\pi\over 2}$ and hence $\Re z$ is almost positive. Therefore, $\arg f(\rho)$ belongs to the interval $(-\pi,\delta)$ for some small positive constant $\delta$ independent of $\nu,k$. 

Assume first that $\rho$ belongs to  the horizontal part of $b\Omegabf^\nu_c(k)$ which is the union of two half-lines given by
$$\Re\rho\leq -{\log(c\nu)\over 2\nu} \quad \mbox{and} \quad \Im \rho =-{\arg\sigma\over 2\nu}  +{k\pi \over \nu}+{\pi\over 4\nu} \pm {\pi\over 2\nu}\cdot$$
The above discussion on $\arg f(\rho)$ implies that $|\arg (\sigma^{-1}h_\nu(z)+1)|$ is bounded below by ${\pi\over 2}-\delta$. It follows that $|h_\nu(z)|$ is bounded below by a positive constant.

It remains to consider the case where $\rho$ belongs to the vertical part of $b\Omegabf^\nu_c(k)$. We have
$$\Re\rho= -{\log(c\nu)\over 2\nu}\cdot$$
Therefore, 
$$|h_\nu(z)+\sigma|={c\over \nu|1-z^2|}\cdot$$
It is enough to check that the last quantity is small. Since $\nu$ is large, this is clear when $z$ is outside a fixed neighbourhood of 1. Otherwise, we deduce from (\ref{e:rho}) that 
$$|1-z^2|\gtrsim |\rho|^{2/3}\gtrsim |\Re\rho|^{2/3}\gtrsim \nu^{-2/3}.$$
The result follows.
\endproof

\begin{lemma} \label{l:g_h}
Assume that $\nu$ is large enough and $k\geq -{\nu\over 2}-2$. Then $g_\nu$  and $h_\nu$, as functions on $\rho$, have the same number of zeros in $\Omegabf^\nu_c(k)$ counted with multiplicity.
\end{lemma}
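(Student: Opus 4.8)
The plan is to apply Rouché's theorem on the boundary of the half-strip $\Omegabf^\nu_c(k)$, comparing $g_\nu$ with $h_\nu$. First I would recall from (\ref{e:pole}) and (\ref{e:pole_almost}) that
$$g_\nu(z)-h_\nu(z)={e^{-2\nu\rho(z)}\over \nu^2(1-z^2)}\epsilon_\nu(z)-\sigma\,\epsilon'_\nu(z),$$
so that, since $|\sigma|$ is bounded above and $\epsilon_\nu,\epsilon'_\nu\to 0$ uniformly on $\overline\J_c^\nu$, we can write $g_\nu-h_\nu=\bigl(h_\nu+\sigma\bigr)\epsilon_\nu-\sigma\epsilon'_\nu$ up to reorganizing, and in any case $|g_\nu-h_\nu|\leq o(1)\bigl(|h_\nu(z)|+1\bigr)$ uniformly on $\overline\Omegabf^\nu_c(k)$ as $\nu\to\infty$. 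The goal is then to show $|g_\nu-h_\nu|<|h_\nu|$ on $b\Omegabf^\nu_c(k)$, which by Lemma \ref{l:h_function} follows once the $o(1)$ error beats the uniform lower bound $|h_\nu|\geq A>0$ on that boundary: indeed $|g_\nu-h_\nu|\leq o(1)(|h_\nu|+1)\leq o(1)(1+A^{-1})|h_\nu|<|h_\nu|$ for $\nu$ large.

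The one subtlety is that $\Omegabf^\nu_c(k)$ is an unbounded domain (a half-strip extending to $\Re\rho\to-\infty$), so Rouché's theorem cannot be applied verbatim; I would truncate the half-strip by intersecting with $\{\Re\rho\geq -T\}$ for $T$ large, apply Rouché on the resulting bounded rectangle, and then let $T\to\infty$. For this to work I need both $g_\nu$ and $h_\nu$ to have no zeros in the far region $\{\Re\rho\leq -T\}\cap\Omegabf^\nu_c(k)$: for $h_\nu$ this is the content of the first sentence of Lemma \ref{l:h_function} (for $\rho$ large in the strip, $|h_\nu|$ is large because the exponential term dominates), and for $g_\nu$ it follows from the same estimate together with $|g_\nu-h_\nu|<|h_\nu|$ there. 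Hence the number of zeros of each of $g_\nu$, $h_\nu$ in the truncated rectangle stabilizes for $T$ large and equals the number in the full half-strip, and Rouché gives equality.

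The main obstacle — really the only place care is needed — is the interface between the uniform smallness of $\epsilon_\nu,\epsilon'_\nu$ on $\overline\J_c^\nu$ and the uniform lower bound on $|h_\nu|$ along $b\Omegabf^\nu_c(k)$ supplied by Lemma \ref{l:h_function}: one must check that the constant $A$ in that lemma is independent of $k$ (which it is, by hypothesis, for $k\geq -\nu/2-2$) so that a single choice of large $\nu$ works simultaneously for the comparison on every such strip. Once that uniformity is in hand, the Rouché argument with truncation is routine, and the lemma follows. Note also that $g_\nu$ and $h_\nu$ are holomorphic in $z$, hence in $\rho$ via the biholomorphism $\rho$, on $\J_c^\nu$, so counting zeros "as functions of $\rho$" in $\Omegabf^\nu_c(k)$ is legitimate.
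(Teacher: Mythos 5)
Your argument is correct and matches the paper's proof in all essentials: you express $g_\nu-h_\nu=(h_\nu+\sigma)\epsilon_\nu-\sigma\epsilon'_\nu$, bound it by a vanishing multiple of $|h_\nu|+\text{const}$, invoke the uniform lower bound from Lemma \ref{l:h_function} on $b\Omegabf^\nu_c(k)$, and conclude via Rouch\'e. The paper compresses this and does not spell out the truncation for the unbounded half-strip, but that is precisely why Lemma \ref{l:h_function} also asserts $|h_\nu|\geq A$ for $\rho$ large in the domain, so your added discussion of truncating at $\Re\rho=-T$ is a correct reading of how the two lemmas fit together rather than a genuinely different route.
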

\proof
We have for $\rho$ as in Lemma \ref{l:h_function}
$$|g_\nu(z)-h_\nu(z)|\leq 2\max\big(|\epsilon_\nu(z)|, |\epsilon'_\nu(z)|\big) \max\big(|\sigma|,|h_\nu(z)+\sigma|\big).$$
Lemma \ref{l:h_function} implies that the last factor is bounded by a constant times $|h_\nu(z)|$. Since $\epsilon_\nu$ and $\epsilon'_\nu$ are small, it is enough to apply Rouch\'e's theorem in order to obtain the result.
\endproof

\begin{lemma} \label{l:sol_unique}
Assume that $\nu$ is large enough. Then, for every $k\in\Z$ with $k\geq -{\nu\over 2}-2$, the function $g_\nu$ admits a unique zero in $\overline\Omegabf^\nu_c(k)$ that we denote by $\rho_{\nu,k}$. Moreover, this zero is simple and belongs to $\Omegabf_c^\nu(k)$.
\end{lemma}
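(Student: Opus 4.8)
The plan is to count the zeros of $g_\nu$ in each half-strip $\Omegabf^\nu_c(k)$ by first counting the zeros of the auxiliary function $h_\nu$ and then invoking Lemma~\ref{l:g_h}, which asserts that $g_\nu$ and $h_\nu$ have the same number of zeros there. So the task reduces to showing that $h_\nu$ has exactly one zero in $\Omegabf^\nu_c(k)$, that it is simple, and that it lies in the open strip. Working in the $\rho$-variable, the equation $h_\nu=0$ reads
\begin{equation*}
e^{-2\nu\rho}=\sigma\,\nu^2(1-z^2)=\sigma\,\nu^2 f(\rho),
\end{equation*}
and taking logarithms (a branch of which is well-defined on the half-strip since $\arg f(\rho)$ stays in an interval of length less than $2\pi$, as noted in the proof of Lemma~\ref{l:h_function}) this becomes
\begin{equation*}
-2\nu\rho=\log\bigl(\sigma\nu^2\bigr)+\log f(\rho)+2\pi i m
\end{equation*}
for the appropriate integer $m$. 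The point of the definition of $\Omegabf^\nu_c(k)$ — with its imaginary part centered at $-{\arg\sigma\over 2\nu}+{k\pi\over\nu}+{\pi\over 4\nu}$ of half-width ${\pi\over 2\nu}$ — is precisely that a solution in this strip should force $m=k$ (plus perhaps a fixed shift), so the strips $\Omegabf^\nu_c(k)$ and the integers $k$ are in bijection with the zeros.

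The main step I would carry out is a fixed-point / Rouché argument. Rewrite the equation as
\begin{equation*}
\rho=\Phi(\rho):=-{1\over 2\nu}\Bigl(\log(\sigma\nu^2)+\log f(\rho)+2\pi i k\Bigr).
\end{equation*}
Using the estimate (\ref{e:log_f}), namely $|\partial\log f/\partial\rho|\lesssim 1+1/|\rho|$ when $\Re\rho<0$, together with $|\rho|\gtrsim|\Re\rho|\geq{\log(c\nu)\over 2\nu}$ on $\Omegabf^\nu_c$, one gets that the derivative $\Phi'(\rho)=-{1\over 2\nu}\,\partial\log f/\partial\rho$ is $O(\nu^{-1}(1+\nu/\log(c\nu)))=O(1/\log(c\nu))$ on the strip, hence $\Phi$ is a contraction with a tiny Lipschitz constant for $\nu$ large. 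I then need to check that $\Phi$ maps (the closure of) $\Omegabf^\nu_c(k)$ into its interior: the real part of $\Phi(\rho)$ is $-{1\over 2\nu}(\log(\nu^2)+\log|\sigma|+\Re\log f(\rho))$, and since $\Re\log f(\rho)=\log|1-z^2|$ is controlled (bounded below by $\gtrsim\log\nu^{-2/3}$ near $z=1$ and otherwise bounded) this stays to the left of $-{\log(c\nu)\over 2\nu}$ once $c$ is chosen appropriately; the imaginary part of $\Phi(\rho)$ is ${k\pi\over\nu}-{\arg\sigma\over 2\nu}-{1\over 2\nu}\arg f(\rho)$, and since $\arg f(\rho)$ is close to $\pi/2$ modulo the small oscillation coming from moving $\rho$ across a strip of width $O(1/\nu)$ — this requires quantifying how much $\arg f$ varies, again via (\ref{e:log_f}) — it lands within ${\pi\over 2\nu}$ of the prescribed center. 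By the contraction mapping principle $\Phi$ has a unique fixed point $\rho_{\nu,k}$ in the strip, which is the unique zero of $h_\nu$ there; simplicity follows because at a zero $h_\nu'$ is comparable to $\partial(e^{-2\nu\rho})/\partial\rho$ which is nonzero, or alternatively because a contraction with a small constant cannot produce a multiple fixed point. Then Lemma~\ref{l:g_h} transfers all of this to $g_\nu$: same number of zeros (namely one), and since that single zero of $g_\nu$ cannot sit on the boundary — on $b\Omegabf^\nu_c(k)$ we have $|g_\nu-h_\nu|<|h_\nu|$ by Lemma~\ref{l:h_function} and the smallness of $\epsilon_\nu,\epsilon'_\nu$, so $g_\nu$ is nonvanishing there — it lies in the open strip and is simple (a simple zero count of $1$ forces the zero itself to be simple).

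The delicate point, and the one I would spend the most care on, is the boundary-mapping verification, i.e. showing $\Phi(\overline\Omegabf^\nu_c(k))\subset\Omegabf^\nu_c(k)$ for \emph{every} admissible $k$ down to $k\geq-{\nu\over 2}-2$ uniformly. For $k$ near the bottom of the range, $\Im\rho$ is near $-\pi/2$, $z$ is near the imaginary axis or near $1$, and $\arg f(\rho)=\arg(1-z^2)$ can drift; one must check the center of the image strip still matches. This is exactly why the hypothesis $k\geq-{\nu\over 2}-2$ appears (so that $\Im\rho$ stays "almost larger than $-\pi/2$", $\Re z$ stays almost positive, and $\arg f(\rho)$ stays in $(-\pi,\delta)$, as used in Lemma~\ref{l:h_function}), and the analysis should mirror, and reuse, the case distinctions in the proof of that lemma. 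A secondary nuisance is the branch bookkeeping for $\log f$ and the passage between the multi-valued equation and the single strip, which I would handle once and for all by fixing the branch on a simply connected neighbourhood of $\overline\Omegabf^\nu_c$ in which $f$ omits a ray, as permitted by the description of $\arg f(\rho)$ above.
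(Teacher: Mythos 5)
Your plan is sound and correctly identifies the reduction to the auxiliary function $h_\nu$ via Lemma~\ref{l:g_h} and the non-vanishing on the boundary from Lemma~\ref{l:h_function}, but for the core existence and uniqueness step it takes a genuinely different route from the paper. The paper introduces $F_{\nu,k}(\rho)=\rho-\Phi(\rho)$ explicitly (your $\Phi$, up to bookkeeping of the $\log\sigma$ and $k\pi i/\nu$ terms), then argues by the argument principle: it shows $\Im F_{\nu,k}$ changes sign across the two horizontal edges of the half-strip, that $\Re F_{\nu,k}$ is monotone along each horizontal edge (using (\ref{e:log_f}) to bound $\partial\log f/\partial\rho$), is strictly positive on the vertical edge, and tends to $-\infty$ as $\Re\rho\to-\infty$, giving winding number one and hence a unique simple zero. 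You instead invoke Banach's fixed point theorem for $\Phi$ on the closed half-strip, deriving the Lipschitz constant $O(1/\log(c\nu))$ from the same estimate (\ref{e:log_f}). The fixed-point approach, if carried through, gives uniqueness and simplicity (via $|1-\Phi'|\geq 1-L>0$) directly from the small Lipschitz constant and is arguably cleaner; the paper's argument-principle computation avoids having to verify that $\Phi$ maps the closed strip into itself, which you correctly flag as the delicate step. One concrete misstatement: you say ``$\arg f(\rho)$ is close to $\pi/2$''. In fact the center of $\Omegabf^\nu_c(k)$ is shifted by $+{\pi\over 4\nu}$ precisely so that the nominal value $\arg f=-{\pi\over 2}$ places $\Im\Phi$ at the center, and the constraint for $\Im\Phi(\rho)$ to land within distance ${\pi\over 2\nu}$ of that center is $\bigl|\arg f(\rho)+{\pi\over 2}\bigr|\leq\pi$, i.e.\ $\arg f\in(-{3\pi\over 2},{\pi\over 2})$; the paper's estimate $\arg f(\rho)\in(-\pi,\delta)$ (valid because $\Re z$ is almost positive under the restriction $k\geq -{\nu\over 2}-2$) comfortably satisfies this, whereas a value near $+{\pi\over 2}$ would put $\Im\Phi$ on the wrong edge. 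This is a sign slip rather than a gap, since the actual requirement is a wide range condition that the paper's bound on $\arg f$ verifies, but it should be corrected if you flesh out the boundary-mapping check.
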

\proof
First, we deduce from Lemma \ref{l:h_function} that $h_\nu$ and $g_\nu$ have no zero on the boundary of $\Omegabf_c^\nu(k)$. By Lemma \ref{l:g_h}, we only have to show that $h_\nu$ admits a unique zero in $\Omegabf_c^\nu(k)$ and this zero is simple.

The zeros of $h_\nu$ are exactly the solutions of  the following family of equations
\begin{equation} \label{e:pole_bis}
F_{\nu,k}(\rho)=0 \quad \mbox{with} \quad k\in\Z,
\end{equation}
where
$$F_{\nu,k}(\rho):=\rho -\Big(-{\log\sigma\over 2\nu} -{\log\nu\over \nu} -{\log f(\rho)\over 2\nu}+ {k\pi i\over\nu} \Big).$$
Therefore, we only have to prove that (\ref{e:pole_bis}) admits a unique solution in $\Omegabf_c^\nu$ which is simple and belongs to $\Omegabf_c^\nu(k)$.

Consider a solution $\rho\in \Omegabf^\nu_c$ of  (\ref{e:pole_bis}). 
By considering the equation $\Im F_{\nu,k}(\rho)=0$, we see that 
$\Im\rho\geq {(k-2)\pi\over\nu}$. So for $k\geq -{\nu\over 2}-2$ and for $\nu$ large enough, $\Im\rho$ is almost larger than $-{\pi\over 2}$. Therefore, $\arg f(\rho)$ belongs to the interval $(-\pi,\delta)$ for some small positive constant $\delta$. We deduce that 
$$\Big|\Im \rho -\big(-{\arg\sigma\over 2\nu}  +{k\pi \over \nu}+{\pi\over 4\nu}\big)\Big|
\leq \Big|-{\arg f(\rho)\over 2\nu}-{\pi\over 4\nu}\Big| <  {\pi\over 2\nu}\cdot$$
It follows that $\rho$ belongs to $\Omegabf^\nu_c(k)$. 

We now use the classical argument principle in order to count the number of zeros of $F_{\nu,k}$ in $\Omegabf^\nu_c(k)$. Observe that $\Im F_{\nu,k}(\rho)$ is bounded on $\overline\Omegabf^\nu_c(k)$ and $\Re F_{\nu,k}(\rho)\to-\infty$ when $|\rho|\to\infty$ and $\rho\in\overline\Omegabf^\nu_c(k)$. We will show in particular  that $\Re F_{\nu,k}(\rho)$ changes sign twice on $b\Omegabf^\nu_c(k)$. 

Consider first  the horizontal part of $b\Omegabf^\nu_c(k)$ which is the union of two half-lines given by
$$\Re\rho\leq -{\log(c\nu)\over 2\nu} \quad \mbox{and} \quad \Im \rho =-{\arg\sigma\over 2\nu}  +{k\pi \over \nu}+{\pi\over 4\nu} \pm {\pi\over 2\nu}\cdot$$
As above, we obtain that $\arg f(\rho)$ belongs to $(-\pi,\delta)$. We then deduce that
 $\Im F_{\nu,k}(\rho)$ is strictly positive on the upper half-line and strictly negative on the lower one. 

Since $|\Re\rho|\gg {1\over\nu}$, the relation (\ref{e:log_f}) implies that 
$\Re\rho\mapsto \Re F_{\nu,k}(\rho)$ defines an increasing function on each of the above half-lines. Therefore,
in order to obtain the lemma,  it suffices to check that $\Re F_{\nu,k}(\rho)>0$ on the vertical part of $b\Omegabf_c^\nu(k)$
which is contained in the line
$$\Re\rho= -{\log(c\nu)\over 2\nu}\cdot$$

Assume that $\rho$ satisfies the last identity. When $|\rho|>1$, $|1-z|$ is bounded below by a positive constant. Therefore, $\log|f(\rho)|$ is bounded below and $\Re F_{\nu,k}(\rho)>0$ for $\nu$ large enough. Otherwise, we deduce from (\ref{e:rho}) that $|f(\rho)|\gtrsim |\rho|^{2/3}$. Therefore, 
since $|\Re\rho| \gg {1\over \nu}$, we have 
$$\log|f(\rho)| \geq \log |\rho|^{2/3} +\const \geq -{2\over 3}\log\nu +\const.$$
It follows that $\Re F_{\nu,k}(\rho)$ is strictly positive when $\nu$ is large enough. This completes the proof of the lemma. 
\endproof

Denote by $z_{\nu,k}$ the complex number in $\C_+$ such that $\rho(z_{\nu,k})=\rho_{\nu,k}$. 
We have the following lemma.

\begin{lemma} \label{l:sol_positive}
Assume that $\nu$ is large enough.
Then, for every $k\geq -{\nu\over 2}+2$, we have $\Re z_{\nu,k}>0$ and $|z_{\nu,k}|>{1\over 2}$. Moreover, if $\rho=\rho(z)$ in $\Omegabf^\nu_c$ is a zero of $g_\nu(z)$ with $\Re z\geq 0$ and $\Im z\geq 0$, then $z=z_{\nu,k}$ and $\rho=\rho_{\nu,k}$ for some $k\geq-{\nu\over 2}-2$. 
\end{lemma}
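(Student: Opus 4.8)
The plan is to leverage Lemma \ref{l:sol_unique}, which already establishes existence and uniqueness of the zero $\rho_{\nu,k}$ of $g_\nu$ in each half-strip $\Omegabf^\nu_c(k)$, and to translate the location of these zeros from the $\rho$-plane back to the $z$-plane. First I would recall that $\rho$ maps $\C_+$ biholomorphically onto $\Omegabf$, so $z_{\nu,k}=\rho^{-1}(\rho_{\nu,k})$ is well defined, and that by the boundary description of $\rho$ in Section \ref{s:function} the preimage of $\{\Im\rho\geq -\pi/2\}$ corresponds to the region where $\Re z$ is positive (the interval $[-1,0)$ maps to $\{\Im\rho=-\pi\}$ and $(0,1]$ to $\R_+$, with the curve $\rho^{-1}([-i\pi,0])$ forming $b\K_+$).

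The key step is to get a good enough estimate on $\Im\rho_{\nu,k}$. From the defining equation $F_{\nu,k}(\rho_{\nu,k})=0$ in the proof of Lemma \ref{l:sol_unique}, taking imaginary parts gives
$$\Im\rho_{\nu,k}=-{\arg\sigma\over 2\nu}+{k\pi\over\nu}-{\arg f(\rho_{\nu,k})\over 2\nu},$$
and since $\rho_{\nu,k}\in\Omegabf^\nu_c(k)$ with $k\geq -\nu/2+2$ one checks that $\arg f(\rho_{\nu,k})\in(-\pi,\delta)$, so $\Im\rho_{\nu,k}\geq {k\pi\over\nu}-{\pi\over 2\nu}-{\delta\over 2\nu}$, which for $k\geq -\nu/2+2$ and $\nu$ large is strictly larger than $-\pi/2$. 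Combined with the boundary behaviour of $\rho$ this forces $\Re z_{\nu,k}>0$; the bound $|z_{\nu,k}|>1/2$ then follows because $\K_+$ contains the half-disc $\overline\D(1/2)\cap\C_+$, on which $\Re\rho\geq 0$, whereas $\rho_{\nu,k}\in\Omegabf^\nu_c$ has $\Re\rho_{\nu,k}<-{\log(c\nu)\over 2\nu}<0$, so $z_{\nu,k}\notin\overline\D(1/2)$.

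For the second assertion, suppose $\rho=\rho(z)\in\Omegabf^\nu_c$ is a zero of $g_\nu$ with $\Re z\geq 0$ and $\Im z\geq 0$. Since the closures of the half-strips $\Omegabf^\nu_c(k)$, $k\in\Z$, cover $\overline\Omegabf^\nu_c$, the point $\rho$ lies in some $\overline\Omegabf^\nu_c(k)$. The condition $\Re z\geq 0$ translates, via the boundary description of $\rho$, into $\Im\rho\geq -\pi/2$ (up to the usual care near the endpoints $\pm1$, handled as in Lemma \ref{l:h_function}), and plugging this into the inequality $\Im\rho\geq {(k-2)\pi\over\nu}$ — or rather reading it the other way, $\Im\rho<{(k+1)\pi\over\nu}$ roughly — yields $k\geq -\nu/2-2$ for $\nu$ large. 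Then Lemma \ref{l:sol_unique} applies to this $k$ and gives $\rho=\rho_{\nu,k}$, hence $z=z_{\nu,k}$.

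The main obstacle I anticipate is the careful bookkeeping near the point $z=-1$ (equivalently $\Im\rho=-\pi$), where the extended map $f:\overline\Omegabf\to\C\setminus\{1\}$ ceases to be injective and the control on $\arg f$ degrades; one must check that all the $k$ in the relevant range ($k\geq -\nu/2\pm 2$) keep $\rho_{\nu,k}$ safely away from that corner, i.e. with $\Im\rho_{\nu,k}$ bounded below by something like $-\pi/2$, so that the estimates (\ref{e:log_f}) and the argument bounds used throughout Section \ref{s:radial} remain valid. The slight discrepancy between the thresholds $k\geq -\nu/2+2$ (for the conclusion $\Re z_{\nu,k}>0$) and $k\geq -\nu/2-2$ (for the range produced in the converse) is exactly the buffer absorbing this corner issue and the $O(1/\nu)$ errors, and I would make sure the constants are chosen consistently with Lemmas \ref{l:h_function}–\ref{l:sol_unique}.
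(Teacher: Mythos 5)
Your proof is correct and follows essentially the same route as the paper: invoke Lemma \ref{l:sol_unique} for existence and uniqueness, show $\Im\rho_{\nu,k}>-\pi/2$ when $k\geq-\nu/2+2$, translate this through $\rho$ to get $\Re z_{\nu,k}>0$, use $\Re\rho_{\nu,k}<0$ together with $\overline{\D}(1/2)\cap\C_+\subset\K_+$ to get $|z_{\nu,k}|>1/2$, and for the converse run the argument backwards ($\Re z\geq 0$, $\Im z\geq 0$ forces $\Im\rho\geq-\pi/2$, hence $\rho\in\overline{\Omegabf}^\nu_c(k)$ only for $k\geq-\nu/2-2$). The only stylistic deviation is that you derive $\Im\rho_{\nu,k}>-\pi/2$ by taking imaginary parts in $F_{\nu,k}(\rho_{\nu,k})=0$, whereas the paper reads it directly off the definition of $\Omegabf^\nu_c(k)$; both give the same bound with the same $O(1/\nu)$ slack.
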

\proof
When  $k\geq -{\nu\over 2}+2$, since $\rho_{\nu,k}$ belongs to $\Omegabf^\nu_c(k)$, we have $\Im\rho_{\nu,k}>-{\pi\over 2}$. Hence, $\Re z_{\nu,k}>0$. 
Since $\Re\rho_{\nu,k}<0$, we have  $z_{\nu,k}\not\in\K_+$ and hence,
$|z_{k,\nu}|>{1\over 2}$. 
If $\rho$ and $z$ are as in the lemma, then $\Im\rho\geq -{\pi\over 2}$. Such a point $\rho$ should be in $\overline\Omegabf_c^\nu(k)$ for some $k\geq -{\nu\over 2}-2$. Lemma \ref{l:sol_unique} implies the result.
\endproof

For $l\in\N$ define $\nu:=l+{d\over 2}-1$ (we use here the notation of Stefanov \cite{Stefanov} which is slightly different from the one by Zworski \cite[(25)]{Zworski2}). 
For $r>0$, denote by $n^+_l(r)$ (resp. $n^-_l(r)$) the number of points $z_{\nu,k}$ in  $\overline \D({r\over \nu})$  with $k>0$ (resp. $-{\nu\over 2}+2<k \leq 0$).
Theorem \ref{t:main_1} is a consequence of the following two propositions whose proofs will be given at the end of the section.

\begin{proposition} \label{p:number_sol_1}
Assume that $\nu$ and $r$ are large enough. Then,
\begin{enumerate}
\item[{\rm (a)}] $n^+_l(r)=0$ for $l\geq 2r$;
\item[{\rm (b)}] $z_{\nu,k}\not\in \overline\D({r\over \nu})$ if $k>cr$ for a fixed constant $c>0$ large enough; in particular, we have 
$n_l^+(r)\leq cr$;
\item[{\rm (c)}]
We have for every constant $\epsilon>0$
$$\sum_{0\leq l\leq 2r} n^+_l(r)(\dim H_l) = {\volume(\B)^2\over (2\pi)^d}r^d + O(r^{d-3/4+\epsilon}) \quad \mbox{as} \quad r\to\infty.$$
\end{enumerate}
\end{proposition}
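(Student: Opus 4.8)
The plan is to mimic the structure of Proposition \ref{p:Weyl}, replacing the zeros $\widetilde z_{\nu,k}$ of the Bessel function $J_\nu(\nu z)$ by the zeros $z_{\nu,k}$ of the functions $g_\nu$, and then to control the difference between the two counting quantities. For part (a): if $l\geq 2r$, then $\nu\geq 2r-1$, so $r/\nu$ is bounded by roughly $1/2$, i.e. $\overline\D(r/\nu)\subset \overline\D(1/2)$. But by Lemma \ref{l:sol_positive}, every $z_{\nu,k}$ with $k>0$ (indeed with $k\geq -\nu/2+2$) satisfies $|z_{\nu,k}|>1/2$, hence lies outside $\overline\D(r/\nu)$; therefore $n^+_l(r)=0$. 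For part (b): when $k>cr$ with $c$ large, I would use Lemma \ref{l:sol_unique}, which places $\rho_{\nu,k}$ in $\Omegabf^\nu_c(k)$, so $\Im\rho_{\nu,k}$ is close to $k\pi/\nu$ and hence $|\rho_{\nu,k}|\gtrsim k/\nu$ is large. One then transfers this to a lower bound on $|z_{\nu,k}|$: since $\rho(z)$ behaves like $-\log z$ for large $z$ and like the expression in Proposition \ref{p:Weyl}(b) on $[2,\infty)$, a large value of $|\rho_{\nu,k}|$ forces $z_{\nu,k}$ large — more precisely, $\nu|z_{\nu,k}|\gtrsim \nu|\rho_{\nu,k}|\gtrsim k > cr$, so $z_{\nu,k}\notin\overline\D(r/\nu)$. (Here one must double-check that $z_{\nu,k}$ cannot be small with $\rho$ large, which is impossible since $\rho$ maps a neighborhood of $0$ to a neighborhood of $\infty$ in a controlled way; but for large $k$ the relevant regime is $z$ large, where $|\rho|\asymp|z|$ on the real axis.) Taking $c$ large enough then also gives $n^+_l(r)\leq cr$.

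The heart of the matter is part (c), and the strategy is to compare $\sum_l n^+_l(r)\dim H_l$ with the Weyl sum $\sum_l m^+_l(r)\dim H_l$ from Proposition \ref{p:Weyl}(c), which already equals $\volume(\B)^2 (2\pi)^{-d}r^d+O(r^{d-1})$. The key observation is that the zeros $z_{\nu,k}$ of $g_\nu$ and the Bessel zeros $\widetilde z_{\nu,k}$ are both governed, via $\rho$, by arithmetic-progression-type asymptotics: Lemma \ref{l:sol_unique} gives $\rho_{\nu,k}$ within $O(1/\nu)$ of $-\arg\sigma/(2\nu)+k\pi/\nu+\pi/(4\nu)$, while Lemma \ref{l:zero_Bessel_bis} gives $\widetilde\rho_{\nu,k}$ within $O(1/\nu)$ of $3\pi i/(4\nu)+k\pi i/\nu$. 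Thus for each fixed $\nu$, the number of $z_{\nu,k}$ (with $k>0$) in $\overline\D(r/\nu)$ and the number of $\widetilde z_{\nu,k}$ in $\overline\D(r/\nu)$ differ by at most a bounded constant: both count, up to $O(1)$, the integer part of $\nu|\rho(r/\nu)|/\pi$ along $i\R_+$ (using that $\rho$ sends $[1,\infty)$ bijectively onto $i\R_+$ and is monotone there). Hence $|n^+_l(r)-m^+_l(r)|\leq C$ uniformly in $l$.

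Summing this pointwise comparison naively only gives an error $\sum_{l\lesssim r}C\dim H_l\asymp r^{d-1}$, which is already enough for the statement $O(r^{d-1})$, hence a fortiori $O(r^{d-3/4+\epsilon})$ — so in fact the crude comparison suffices for the stated exponent $d-3/4+\epsilon$ in (c), and the refinement to $3/4$ comes from being slightly careful only near the "edge" $l\approx r$ where the count is small anyway. Concretely, I would split the sum over $l$ into the bulk $l\leq (1-\eta)r$ (for suitable $\eta$), where $r/\nu$ is bounded away from $1$ so that $\rho(r/\nu)\in i\R_+$ is of size $\gtrsim 1$ and the $O(1)$ discrepancy per $l$ contributes $O(r^{d-1})$, and the edge range $(1-\eta)r<l\leq 2r$ where $r/\nu$ is near $1$, $|\rho(r/\nu)|$ is small (of order $(1-z)^{3/2}$ by \eqref{e:rho}), and $n^+_l(r), m^+_l(r)$ are both $O((\nu|\rho(r/\nu)|/\pi)+1)$; there the number of relevant $l$ is $O(r)$ and each term is $O(\nu^{d-2}\cdot(\text{small}))$, and summing the $3/2$-power against $\dim H_l\asymp l^{d-2}$ produces the gain from $r^{d-1}$ down to $r^{d-3/4}$, the loss of $\epsilon$ absorbing the logarithmic factors in Lemma \ref{l:Bessel_1}. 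The main obstacle, and the step requiring the most care, is making the per-$l$ comparison $|n^+_l(r)-m^+_l(r)|=O(1)$ genuinely uniform down to the edge and correctly bookkeeping the contribution of solutions with $-\nu/2+2<k\leq 0$ (the quantity $n^-_l(r)$), which are excluded from $n^+_l$ but could a priori land in $\overline\D(r/\nu)$; Lemma \ref{l:sol_positive} controls their location, and one must verify their total contribution is also $O(r^{d-3/4+\epsilon})$ — this is where checking that $\rho_{\nu,k}$ for $k\leq 0$ has $z_{\nu,k}$ bounded away from the relevant disc (or contributes negligibly) is delicate.
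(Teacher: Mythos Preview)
Your treatments of (a) and (b) are essentially the paper's arguments and are fine.

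For (c) there is a genuine gap. Your central claim --- that $|n^+_l(r)-m^+_l(r)|\le C$ uniformly in $l$ --- rests on the assertion that Lemma~\ref{l:sol_unique} places $\rho_{\nu,k}$ within $O(1/\nu)$ of a specific point. It does not: Lemma~\ref{l:sol_unique} only says $\rho_{\nu,k}\in\Omegabf^\nu_c(k)$, which pins down $\Im\rho_{\nu,k}$ to an interval of width $\pi/\nu$ but leaves $\Re\rho_{\nu,k}$ free (below $-\log(c\nu)/(2\nu)$). The actual localization of $\rho_{\nu,k}$ comes from Lemma~\ref{l:sol_app} via the explicit point $\rho^\star_{\nu,k}$, and that lemma requires $|k|\ge\nu^{1/4}$. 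Even then, $\Re\rho^\star_{\nu,k}$ is of order $-(\log k)/\nu$, not $O(1/\nu)$; pushed back to $z$ this yields (Lemma~\ref{l:near_zero_Bessel}) $|\nu z_{\nu,k}-\nu\widetilde z_{\nu,k}|\lesssim |\nu\widetilde z_{\nu,k}|^{1/4+\epsilon}$, which is not $O(1)$. So your $O(1)$ discrepancy is false as stated, and the $\nu^{1/4}$ indices with small $k$ are not accounted for at all.

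The paper's route is accordingly different and does not split into bulk and edge. It uses Lemma~\ref{l:near_zero_Bessel} to get the sandwich
\[
m^+_l(r-r^{1/4+\epsilon})-\nu^{1/4}\ \le\ n^+_l(r)\ \le\ m^+_l(r+r^{1/4+\epsilon})+\nu^{1/4}
\]
for $l\ge L_r\sim r^{1/4}$ (so that the constraint $k\le\epsilon_0\nu^4$ is harmless), and then applies Proposition~\ref{p:Weyl} at the shifted radii $r\pm r^{1/4+\epsilon}$. The shift costs $O(r^{d-1}\cdot r^{-3/4+\epsilon})=O(r^{d-3/4+\epsilon})$, and the $\nu^{1/4}$ correction costs $\sum_{l\lesssim r}l^{1/4}\dim H_l=O(r^{d-3/4})$; small $l<L_r$ contribute $O(r\cdot L_r^{d-1})=o(r^{d-3/4})$ by parts (a)--(b). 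Thus the exponent $3/4$ comes directly from the $1/4$ in Lemma~\ref{l:near_zero_Bessel} (ultimately from the restriction $k\ge\nu^{1/4}$ in Lemma~\ref{l:sol_app}), not from any $(1-z)^{3/2}$ edge behavior. Finally, your concern about $n^-_l(r)$ is misplaced here: by definition $n^+_l(r)$ counts only $k>0$, and the $k\le 0$ contribution is the subject of Proposition~\ref{p:number_sol_2}, not this one.
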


\begin{proposition} \label{p:number_sol_2}
Assume that $\nu$ and $r$ are large enough. Then,
$n^-_l(r)=0$ for $l\geq 2r$. 
Moreover, we have for every constant $\epsilon>0$
$$\sum_{0\leq l\leq 2r} n^-_l(r)(\dim H_l) ={r^d\over \pi d(d-2)!} \int_{\partial\K_+}{|1-z^2|^{1/2}\over |z|^{d+1}}|dz| +O(r^{d-3/4+\epsilon}) \quad \mbox{as} \quad r\to\infty.$$
\end{proposition}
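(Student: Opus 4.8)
The plan is to follow the architecture of the proof of Proposition~\ref{p:number_sol_1}: pin down the location of the zeros $z_{\nu,k}$ with $-\nu/2+2<k\le 0$, count those lying in $\overline\D(r/\nu)$, and sum over $l$ with weight $\dim H_l$. The vanishing statement is immediate: if $l\ge 2r$ then $\nu=l+d/2-1\ge 2r$, and by Lemma~\ref{l:sol_positive} every $z_{\nu,k}$ with $k>-\nu/2+2$ satisfies $|z_{\nu,k}|>1/2\ge r/\nu$, so $z_{\nu,k}\notin\overline\D(r/\nu)$ and $n^-_l(r)=0$.

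For $0\le l\le 2r$ I would first extract from Lemma~\ref{l:sol_unique} the precise location of $\rho_{\nu,k}$: it lies in $\Omegabf^\nu_c(k)$ and solves $F_{\nu,k}(\rho_{\nu,k})=0$, i.e.
\[
\rho_{\nu,k}=\frac{k\pi i}{\nu}-\frac{\log\sigma}{2\nu}-\frac{\log\nu}{\nu}-\frac{\log f(\rho_{\nu,k})}{2\nu}.
\]
Since $|f(\rho_{\nu,k})|$ is bounded above and, using $|\Re\rho_{\nu,k}|\gtrsim\nu^{-1}\log\nu$ together with (\ref{e:rho}), bounded below by $\gtrsim\nu^{-2/3}$, this forces $\Re\rho_{\nu,k}$ to be of order $-\nu^{-1}\log\nu$ (in particular $\Re\rho_{\nu,k}\to0$) and $\Im\rho_{\nu,k}=k\pi/\nu+O(1/\nu)$. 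Thus, writing $t\mapsto z(t)$, $t\in[0,\pi]$, for the parametrization of the curved part of $\partial\K_+$ by $\rho(z(t))=-it$, the point $z_{\nu,k}$ lies just outside $\partial\K_+$ and is close to $z(t_{\nu,k})$ with $t_{\nu,k}:=-\Im\rho_{\nu,k}=-k\pi/\nu+O(1/\nu)\in[0,\pi/2)$; here $|z_{\nu,k}-z(t_{\nu,k})|\lesssim|\Re\rho_{\nu,k}|\cdot\sup|\partial z/\partial\rho|$, with $\partial z/\partial\rho=-z/\sqrt{1-z^2}$ bounded away from $z=1$ but blowing up like $|1-z|^{-1/2}\sim|\rho|^{-1/3}$ near the turning point $z=1$. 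Because the $t_{\nu,k}$ are spaced by $\pi/\nu$ and $|z(t)|$ is bounded above and below by positive constants on $[0,\pi/2]$ (indeed $>1/2$ by Lemma~\ref{l:sol_positive}), this gives
\[
n^-_l(r)=\frac{\nu}{\pi}\,\Leb\big\{t\in[0,\pi/2]:\ |z(t)|\le r/\nu\big\}+E_\nu,
\]
where $E_\nu$ absorbs the rounding error, the $O(1/\nu)$ jitter of $t_{\nu,k}$, and the transported error $|z_{\nu,k}-z(t_{\nu,k})|$.

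Next I would sum, using $\dim H_l=\tfrac{2\nu^{d-2}}{(d-2)!}+O(\nu^{d-3})$ and replacing $\sum_l$ by $\int d\nu$; the main term becomes
\[
\frac{2}{\pi(d-2)!}\int_0^\infty\nu^{d-1}\Leb\big\{t\in[0,\pi/2]:\ \nu|z(t)|\le r\big\}\,d\nu=\frac{2}{\pi(d-2)!}\int_0^{\pi/2}\Big(\int_0^{r/|z(t)|}\nu^{d-1}\,d\nu\Big)dt=\frac{2r^d}{\pi d(d-2)!}\int_0^{\pi/2}\frac{dt}{|z(t)|^d}
\]
by Fubini. Along $\partial\K_+$ one has $dt=|d\rho|=\tfrac{|1-z^2|^{1/2}}{|z|}|dz|$ by (\ref{e:rho}), so $\int_0^{\pi/2}\tfrac{dt}{|z(t)|^d}$ equals the integral of $\tfrac{|1-z^2|^{1/2}}{|z|^{d+1}}$ over the arc $\{z(t):t\le\pi/2\}$. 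Finally the reflection $z\mapsto-\bar z$ satisfies $\rho(-\bar z)=\overline{\rho(z)}-i\pi$, hence sends $z(t)$ to $z(\pi-t)$ and preserves $|z|$, $|1-z^2|$ and arc length; therefore $\int_0^{\pi/2}\tfrac{dt}{|z(t)|^d}=\tfrac12\int_{\partial\K_+}\tfrac{|1-z^2|^{1/2}}{|z|^{d+1}}|dz|$, which produces exactly the asserted main term $\tfrac{r^d}{\pi d(d-2)!}\int_{\partial\K_+}\tfrac{|1-z^2|^{1/2}}{|z|^{d+1}}|dz|$.

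The hard part will be showing that every error is $O(r^{d-3/4+\epsilon})$. The $O(\nu^{d-3})$ term in $\dim H_l$ and the $O(1)$-per-$\nu$ rounding contribute only $O(r^{d-1})$. The two delicate points are: (i) near $z=1$ the representation (\ref{e:pole}) of $g_\nu$ and the expansions (\ref{e:Ai_1})--(\ref{e:Bessel_1}) degenerate, so $z_{\nu,k}$ is located there with reduced precision and it becomes uncertain which of these zeros lie in $\overline\D(r/\nu)$; and (ii) in replacing $\sum_\nu$ by $\int d\nu$ one must control the regularity of $\nu\mapsto\nu^{d-1}\Leb\{t\in[0,\pi/2]:|z(t)|\le r/\nu\}$, whose distribution function carries a $(1-c)^{3/2}$-type feature at $c=|z(0)|=1$ (coming from $|\rho|\sim\const|1-z|^{3/2}$) together with square-root features at the critical values of $|z(t)|$. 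Optimizing the radius $\delta$ of the excised turning-point neighbourhood $|1-z|\le\delta$ against the accuracy available outside it --- the same balance that governs Proposition~\ref{p:number_sol_1} --- is what produces the exponent $r^{d-3/4+\epsilon}$, and this is the crux of the argument.
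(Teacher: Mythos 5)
Your outline reproduces the architecture of the paper's proof: compare the zeros $z_{\nu,k}$ with $-\nu/2+2<k\le 0$ against reference points on $\partial\K_+$, count via arc length in the $\rho$-variable, and convert the $l$-sum into $\frac{r^d}{\pi d(d-2)!}\int_{\partial\K_+}\frac{|1-z^2|^{1/2}}{|z|^{d+1}}|dz|$ by Fubini and the symmetry $z\mapsto-\bar z$. The vanishing statement and the main-term calculation (including the Jacobian $|d\rho|=\frac{|1-z^2|^{1/2}}{|z|}|dz|$ on $\partial\K_+$) are correct, and your worry about replacing $\sum_l$ by $\int d\nu$ is unfounded --- since $|z|$ is bounded below on $\partial\K_+$, that substitution costs only $O(r^{d-1})$.

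The genuine gap is precisely the part you label ``the crux'': the exponent $d-\tfrac34+\epsilon$ is asserted via ``optimizing the excised turning-point neighbourhood'' but never established. What is actually needed is the quantitative content of Lemmas \ref{l:sol_app} and \ref{l:count_negative} of the paper: setting $\widehat z_{\nu,k}:=\rho^{-1}(k\pi i/\nu)\in\partial\K_+$, one shows for $-\nu/2+2<k\le-\nu^{1/4}$ that $|\rho_{\nu,k}-\tfrac{k\pi i}{\nu}|\lesssim\tfrac{\log\nu}{\nu}$ (via the fixed-point form of $F_{\nu,k}$) and, using $|\partial z/\partial\rho|\lesssim|\rho|^{-1/3}\lesssim(\nu/|k|)^{1/3}$, that
$$|\nu z_{\nu,k}-\nu\widehat z_{\nu,k}|\lesssim(\log\nu)\Big(\frac{\nu}{|k|}\Big)^{1/3}\ll\nu^{1/4+\epsilon}\lesssim|\nu\widehat z_{\nu,k}|^{1/4+\epsilon}.$$
This, together with the observation that the excluded indices $|k|<\nu^{1/4}$ contribute at most $O(\nu^{1/4})$ per $l$, gives $m^-_l(r-r^{1/4+\epsilon})+O(l^{1/4})\le n^-_l(r)\le m^-_l(r+r^{1/4+\epsilon})+O(l^{1/4})$, and summing against $\dim H_l=\tfrac{2\nu^{d-2}}{(d-2)!}+O(\nu^{d-3})$ yields the claimed $O(r^{d-3/4+\epsilon})$. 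Without this explicit comparison your argument stops at a plausible plan; also note that your sentence ``the representation (\ref{e:pole}) degenerates near $z=1$'' is slightly off --- the representation is fine throughout $\J_c^\nu$; what degenerates is the $z$-precision of the zero, because $|\partial z/\partial\rho|\to\infty$ at the turning point, which is exactly what forces the $|k|\ge\nu^{1/4}$ cutoff.
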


\medskip

\noindent
{\bf End of the proof of Theorem \ref{t:main_1}.} 
Using the decomposition of functions into spherical harmonics, Zworski relates scattering poles of $-\Delta+V$  to the zeros of a sequence of functions of the form (\ref{e:pole}) with $l\in\N$. More precisely, 
if $\rho=\rho(z)$ is a zero of $g_\nu(z)$ with $z\in\J^\nu_c$, then $-\nu z$ is a scattering pole 
with multiplicity $\dim H_l$, see \cite[\S 2]{Zworski1}. If 
$n_l(r)$ is the number of zeros of $g_\nu(z)$ in $\J^\nu_c\cap \overline\D({r\over\nu})$ with unknown $z$, Zworski proved that $n_l(r)\lesssim r$ and $n_l(r)=0$ for $l>2r$, see \cite[p.386]{Zworski2}. Therefore,
we have 
\begin{equation*} \label{e:number_scattering}
n_V(r)=\sum_{l\geq 0} n_l(r) (\dim H_l)=\sum_{0\leq l\leq 2r} n_l(r) (\dim H_l).
\end{equation*}
By Lemma  \ref{l:H_l}, we only need to consider $l$ large enough.

In our setting with a real potential, the scattering poles are symmetric with respect to the real line $\Re z=0$. 
This and Lemma \ref{l:sol_positive} imply that
\begin{equation*} \label{e:sym_scattering}
2n_l^+(r)+2n_l^-(r)\leq n_l(r)\leq 2n_l^+(r)+2n_l^-(r)+4.
\end{equation*}
Now, in order to obtain the result, it suffices to apply
 Propositions \ref{p:number_sol_1}, \ref{p:number_sol_2}  and the identity (\ref{e:c_d}).
\hfill $\square$

\bigskip

We give now the proofs of the above propositions. 
For $k\in\Z\setminus\{0\}$ such that  $k\geq -{\nu\over 2}+2$, define
$$\rho^\star_{\nu,k}:=-{\log\sigma\over 2\nu}- {\log\nu\over\nu} -{1\over 2\nu} \log f\big({k\pi i\over \nu}\big) +{k\pi i\over \nu}\cdot$$ 
These points are easier to count and we will compare them with $\rho_{\nu,k}$. 

\begin{lemma} \label{l:rho_star}
Assume that $\nu$ is large enough, $k\geq -{\nu\over 2}+2$  and $|k|\geq \nu^{1/4}$. 
Then, we have 
$$|F_{\nu,k}(\rho_{\nu,k}^\star)|\ll \nu^{-6/5}
\ \mbox{ and } \ 
|h_\nu(\rho_{\nu,k}^\star)|\ll \nu^{-1/5}.$$
\end{lemma}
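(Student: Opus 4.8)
The plan is to treat the two estimates in turn, the second being a direct consequence of the first together with the definition of $h_\nu$ in terms of $F_{\nu,k}$. First I would unwind the definition of $F_{\nu,k}$ applied to $\rho^\star_{\nu,k}$:
\begin{equation*}
F_{\nu,k}(\rho^\star_{\nu,k})=\rho^\star_{\nu,k}-\Big(-{\log\sigma\over 2\nu}-{\log\nu\over\nu}-{\log f(\rho^\star_{\nu,k})\over 2\nu}+{k\pi i\over\nu}\Big)={\log f({k\pi i\over\nu})-\log f(\rho^\star_{\nu,k})\over 2\nu}.
\end{equation*}
So everything reduces to estimating $|\log f({k\pi i\over\nu})-\log f(\rho^\star_{\nu,k})|$, and for that I need a bound on $|\rho^\star_{\nu,k}-{k\pi i\over\nu}|$ together with a Lipschitz bound for $\log f$ between these two points.

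The distance $|\rho^\star_{\nu,k}-{k\pi i\over\nu}|$ is, by the defining formula for $\rho^\star_{\nu,k}$, bounded by ${1\over 2\nu}|\log\sigma|+{\log\nu\over\nu}+{1\over 2\nu}\big|\log f({k\pi i\over\nu})\big|$, which is $O(\nu^{-1}\log\nu)$ as long as $\log f({k\pi i\over\nu})$ does not blow up — i.e. as long as ${k\pi i\over\nu}$ stays away from the branch point $1$ of $f$. The hypothesis $|k|\geq \nu^{1/4}$ is exactly what guarantees this: the point ${k\pi i\over\nu}$ has modulus $\geq \pi\nu^{-3/4}$, so by the estimate $|f(\rho)|\gtrsim|\rho|^{2/3}$ near $\rho=0$ coming from (\ref{e:rho}), we get $|f({k\pi i\over\nu})|\gtrsim\nu^{-1/2}$, hence $|\log f({k\pi i\over\nu})|\lesssim\log\nu$. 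Therefore $|\rho^\star_{\nu,k}-{k\pi i\over\nu}|\lesssim\nu^{-1}\log\nu$. Both endpoints lie in (a neighbourhood of) $\overline\Omegabf^\nu_c$ with $\Im\rho$ almost $\geq -\pi/2$, so by (\ref{e:log_f}) one has $|\partial\log f/\partial\rho|\lesssim 1+|\rho|^{-1}$ along the segment joining them; since the smaller of the two moduli is $\gtrsim\nu^{-3/4}$, this derivative is $\lesssim\nu^{3/4}$ on the segment. Multiplying the length $\lesssim\nu^{-1}\log\nu$ by the Lipschitz constant $\lesssim\nu^{3/4}$ gives $|\log f({k\pi i\over\nu})-\log f(\rho^\star_{\nu,k})|\lesssim\nu^{-1/4}\log\nu$, so $|F_{\nu,k}(\rho^\star_{\nu,k})|\lesssim\nu^{-5/4}\log\nu\ll\nu^{-6/5}$.

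For the second estimate, recall that the zeros of $h_\nu$ are precisely the solutions of $F_{\nu,k}(\rho)=0$, which means $\sigma^{-1}h_\nu+1=e^{-2\nu F_{\nu,k}(\rho)}$ (up to checking the exact relation between $h_\nu$ written in the variable $\rho$ and $F_{\nu,k}$, which follows by exponentiating the defining identities for $h_\nu$ in (\ref{e:pole_almost}) and for $F_{\nu,k}$). Since $|\sigma|$ is bounded, $|h_\nu(\rho^\star_{\nu,k})|\lesssim |e^{-2\nu F_{\nu,k}(\rho^\star_{\nu,k})}-1|\lesssim \nu|F_{\nu,k}(\rho^\star_{\nu,k})|\lesssim\nu^{-1/4}\log\nu\ll\nu^{-1/5}$, using $|e^w-1|\lesssim|w|$ for $w$ small. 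The main obstacle is the bookkeeping in the second paragraph: one must be careful that the point $\rho^\star_{\nu,k}$ (and the whole segment to ${k\pi i\over\nu}$) indeed lies in the region where (\ref{e:log_f}) and the near-origin estimate $|f(\rho)|\gtrsim|\rho|^{2/3}$ are valid — in particular that $\Im\rho^\star_{\nu,k}$ is almost $\geq-\pi/2$, which again uses $k\geq-\nu/2+2$ — and that the exponent $1/4$ lost in the Lipschitz step, degraded slightly by the $\log\nu$ factor, still beats the target exponents $6/5$ and $1/5$ after multiplying by the $\nu^{-1}$ (resp. absorbing the $\nu$) in front.
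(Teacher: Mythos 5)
Your plan matches the paper's proof in structure: reduce the first bound to estimating
\begin{equation*}
|F_{\nu,k}(\rho^\star_{\nu,k})|={1\over 2\nu}\,\big|\log f(\rho^\star_{\nu,k})-\log f\big({k\pi i\over\nu}\big)\big|
\end{equation*}
via the displacement $|\rho^\star_{\nu,k}-{k\pi i/\nu}|$ times a Lipschitz constant for $\log f$, and then pass from $F_{\nu,k}$ to $h_\nu$ through the exact identity $h_\nu=\sigma\big(e^{-2\nu F_{\nu,k}}-1\big)$. Your handling of the second estimate is correct, and your treatment of the ``small $\rho$'' regime --- using $|k|\geq\nu^{1/4}$ so that $|\rho|\gtrsim\nu^{-3/4}$ on the segment, whence $|\partial\log f/\partial\rho|\lesssim\nu^{3/4}$ by (\ref{e:log_f}), combined with $|\rho^\star_{\nu,k}-k\pi i/\nu|\lesssim\nu^{-1}\log\nu$ --- is the same computation as the paper's first case.

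However, there is a gap. You claim the distance $|\rho^\star_{\nu,k}-{k\pi i/\nu}|$ is $O(\nu^{-1}\log\nu)$ ``as long as $\log f({k\pi i/\nu})$ does not blow up,'' and then assert that the only danger is the branch point $z=1$ of $f$, i.e.\ $f\to 0$. This misses the other way $\log f$ can blow up: $f\to\infty$. When $k\gg\nu$ the point $k\pi i/\nu$ is far from the branch point but has large modulus; there $|f(k\pi i/\nu)|\asymp(k/\nu)^2$, so $|\log f(k\pi i/\nu)|\asymp 2\log(k/\nu)$, which exceeds $\log\nu$ once $k$ grows faster than any fixed power of $\nu$. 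Your displacement bound then degrades to $\lesssim\log k/\nu$, and with the crude Lipschitz constant $\lesssim 1$ you would only get $|\log f(\rho^\star_{\nu,k})-\log f(k\pi i/\nu)|\lesssim\log k/\nu$, which is \emph{not} $\ll\nu^{-1/5}$ for $k$ superpolynomial in $\nu$. Since the lemma is stated with no upper bound on $k$, this regime must be covered. The paper does so by observing that for $k\geq\nu$ the segment lies where $\Im\rho>\pi/2$, so $|z|\gtrsim|\rho|\gtrsim k/\nu$ and (\ref{e:f_log_f}) gives the sharper Lipschitz bound $|\partial\log f/\partial\rho|\lesssim 1/|z|\lesssim\nu/k$; the product $(\nu/k)\cdot(\log k/\nu)=\log k/k$ then tends to zero uniformly. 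To close your argument you should split into $|k|\leq\nu$ (your computation) and $k\geq\nu$, and in the second case replace the bound $1+|\rho|^{-1}$ by the decaying bound $\lesssim 1/|\rho|$. (In the downstream applications $k$ is in fact at most $\epsilon_0\nu^4$, so the gap is benign in context, but the lemma as stated requires the case split.)
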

\proof
Observe that 
$$|h_\nu(\rho_{\nu,k}^\star)|=|\sigma| \Big|e^{-2\nu F_{\nu,k}(\rho_{\nu,k}^\star)}-1\Big|.$$
So the second inequality in the lemma is a consequence of the first one. We prove now the first inequality. It is enough to check that 
$$\Big|\log f(\rho^\star_{\nu,k})-\log f({k\pi i\over \nu})\Big|\ll \nu^{-1/5}.$$

Consider first the case where $|k|\leq \nu$. In this case, $\rho_{\nu,k}^\star$ and ${k\pi i\over\nu}$ are bounded.
Since $|k|\geq \nu^{1/4}$, 
we have $|\rho|\gtrsim \nu^{-3/4}$ for $\rho$ in $\Omegabf_c^\nu(k)$. 
This together with  (\ref{e:log_f}) implies that
$$\Big|{\partial\log f\over \partial\rho}\Big|\lesssim 1+ {1\over |\rho|}\lesssim \nu^{3/4}.$$

Using the last inequality and the estimate
$$\Big|\rho^\star_{\nu,k}-{k\pi i\over \nu}\Big| \lesssim {\log\nu\over \nu},$$
we obtain that 
$$\Big|\log f(\rho^\star_{\nu,k})-\log f({k\pi i\over \nu})\Big|
\lesssim  \nu^{3/4}{\log\nu\over \nu} \ll \nu^{-1/5}.$$

When $k\geq\nu$  the segment joining $\rho^\star_{\nu,k}$ and $ {k\pi i\over \nu}$ is contained in the half-plane $\Im\rho>{\pi\over 2}$. Therefore, by (\ref{e:f_log_f}), we have on this segment
$$\Big|{\partial\log f\over \partial\rho}\Big| \lesssim {1\over |z|} \lesssim {1\over |\rho|}\lesssim {\nu\over k}\cdot$$
Hence, using that 
$$\Big|\rho^\star_{\nu,k}-{k\pi i\over \nu}\Big| \lesssim {\log k\over \nu},$$
we obtain again
$$\Big|\log f(\rho^\star_{\nu,k})-\log f\big({k\pi i\over \nu}\big)\Big|
\lesssim  {\nu\over k} {\log k\over \nu}
\ll \nu^{-1/5}.$$
This completes the proof of the lemma.
\endproof

\begin{lemma} \label{l:sol_app}
Assume that $\nu$ is large enough,  $k\geq -{\nu\over 2}+2$ and $|k|\geq \nu^{1/4}$. Then, we have 
$$|\rho_{\nu,k}-\rho^\star_{\nu,k}| < {2\pi\over \nu}\cdot$$
\end{lemma}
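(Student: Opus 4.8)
The plan is to apply Rouché's theorem to the functions $h_\nu$ and to a comparison function that isolates the zero $\rho^\star_{\nu,k}$, working in a small disc centered at $\rho^\star_{\nu,k}$ of radius $2\pi/\nu$. First I would recall from Lemma \ref{l:sol_unique} (and its proof) that the zeros of $h_\nu$ in $\Omegabf^\nu_c(k)$ are exactly the solutions of $F_{\nu,k}(\rho)=0$, and that there is precisely one such zero $\rho_{\nu,k}$, which is simple. So it suffices to show that $F_{\nu,k}$ has a zero in the disc $\D(\rho^\star_{\nu,k},2\pi/\nu)$; uniqueness then forces it to coincide with $\rho_{\nu,k}$. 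The natural comparison function on this disc is the linear function $L(\rho):=\rho-\rho^\star_{\nu,k}$, which has the single simple zero $\rho^\star_{\nu,k}$, and $F_{\nu,k}(\rho)=L(\rho)+\bigl(\rho^\star_{\nu,k}-\text{(the bracketed term in }F_{\nu,k})\bigr)$, i.e. $F_{\nu,k}(\rho)=L(\rho)+\tfrac{1}{2\nu}\bigl(\log f(\rho)-\log f(k\pi i/\nu)\bigr)$ after substituting the definition of $\rho^\star_{\nu,k}$.

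The key estimate to carry out is then: on the circle $|\rho-\rho^\star_{\nu,k}|=2\pi/\nu$ we have $|L(\rho)|=2\pi/\nu$, and I must show the perturbation term $\tfrac{1}{2\nu}|\log f(\rho)-\log f(k\pi i/\nu)|$ is strictly smaller than $2\pi/\nu$, i.e. that $|\log f(\rho)-\log f(k\pi i/\nu)|<4\pi$ uniformly. This is essentially the same type of bound already established in the proof of Lemma \ref{l:rho_star}: using the hypotheses $k\geq -\nu/2+2$ and $|k|\geq \nu^{1/4}$, any $\rho$ within $2\pi/\nu$ of $\rho^\star_{\nu,k}$ still has $\Im\rho$ almost $\geq -\pi/2$ (so $\arg f(\rho)\in(-\pi,\delta)$ and the logarithm branch is unambiguous), and $|\rho|\gtrsim \nu^{-3/4}$, hence by (\ref{e:log_f}) the derivative $|\partial\log f/\partial\rho|\lesssim \nu^{3/4}$ on the segment joining $\rho$ to $k\pi i/\nu$ when $|k|\le\nu$ — actually I would instead compare along the short segment from $\rho$ to $\rho^\star_{\nu,k}$ (length $\lesssim \nu^{-1}$) and add the bound $|\log f(\rho^\star_{\nu,k})-\log f(k\pi i/\nu)|\ll\nu^{-1/5}$ from Lemma \ref{l:rho_star}; the first contribution is $\lesssim \nu^{3/4}\cdot\nu^{-1}=\nu^{-1/4}$, and for $k\ge\nu$ one uses instead $|\partial\log f/\partial\rho|\lesssim \nu/k\lesssim 1$ from (\ref{e:f_log_f}) on the relevant segment, giving a bound $\lesssim\nu^{-1}$. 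In all cases $|\log f(\rho)-\log f(k\pi i/\nu)|\to 0$, which is $<4\pi$ for $\nu$ large.

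With this strict inequality on the boundary circle, Rouché's theorem gives that $F_{\nu,k}=L+(\text{small})$ has exactly one zero inside $\D(\rho^\star_{\nu,k},2\pi/\nu)$, counted with multiplicity. One still has to confirm this disc lies (essentially) inside $\Omegabf^\nu_c(k)$, or at least that the unique zero $\rho_{\nu,k}$ supplied by Lemma \ref{l:sol_unique} is the one Rouché locates: since $\rho^\star_{\nu,k}$ is within $O(\nu^{-1/5})$ of $\Omegabf^\nu_c(k)$ (its imaginary part differs from the strip's center by $\tfrac{1}{2\nu}|\arg f(k\pi i/\nu)|\le \pi/(2\nu)$ and its real part is $-\tfrac{1}{2\nu}(\log\sigma\text{-type terms})$, placing it in the strip up to lower order) and the strips $\Omegabf^\nu_c(k)$ have width $\pi/\nu$ of the same order as $2\pi/\nu$, a small bookkeeping argument identifies the Rouché zero with $\rho_{\nu,k}$. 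The main obstacle is precisely this last step — making sure the disc of radius $2\pi/\nu$ around $\rho^\star_{\nu,k}$ does not stray out of the half-plane $\Re\rho<-\log(c\nu)/(2\nu)$ defining $\Omegabf^\nu_c$ nor jump to a neighboring strip $\Omegabf^\nu_c(k\pm1)$; this requires tracking the signs and sizes of $\log\sigma$, $\log\nu$ and $\log f$ carefully, but it is routine given the estimates already in hand.
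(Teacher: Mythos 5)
There is a genuine gap: your proof conflates the zero of $h_\nu$ (equivalently of $F_{\nu,k}$) with the point $\rho_{\nu,k}$, but these are different objects. In Lemma \ref{l:sol_unique}, $\rho_{\nu,k}$ is defined as the unique zero of $g_\nu$ in $\overline\Omegabf^\nu_c(k)$, where $g_\nu$ carries the correction terms $\epsilon_\nu,\epsilon'_\nu$ coming from the actual potential. The proof of that lemma passes to $h_\nu$ (and then to $F_{\nu,k}$) only through the Rouché comparison of Lemma \ref{l:g_h}, which equates the \emph{number} of zeros of $g_\nu$ and $h_\nu$ in each strip; it does not say the zeros themselves coincide. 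Your Rouché argument comparing $F_{\nu,k}$ with $L(\rho)=\rho-\rho^\star_{\nu,k}$ on the circle $|\rho-\rho^\star_{\nu,k}|=2\pi/\nu$ is fine as far as it goes, but it only localizes the zero of $h_\nu$, and the sentence ``uniqueness then forces it to coincide with $\rho_{\nu,k}$'' is false as written. This is not a bookkeeping issue about strips or branch cuts; it is a missing layer of the argument.

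To repair it you would need to first show that $g_\nu$ and $h_\nu$ have the same number of zeros inside your contour, which requires a uniform lower bound $|h_\nu|\gtrsim 1$ on that contour. That bound is exactly the delicate part of the paper's proof: on the square $Q$ of size $(\pi/\nu)\times(\pi/\nu)$ bounded by the vertical lines $\Re\rho=\Re\rho^\star_{\nu,k}\pm\pi/(2\nu)$ and the horizontal edges of $b\Omegabf^\nu_c(k)$, one computes $(h_\nu(\rho)+\sigma)/(h_\nu(\rho^\star_{\nu,k})+\sigma)=e^{\pm\pi}+O(\nu^{-1/5})$ using \eqref{e:log_f} and, crucially, the smallness of $h_\nu(\rho^\star_{\nu,k})$ established in Lemma \ref{l:rho_star}; this, combined with the sign analysis of $\Re F_{\nu,k}$ and $\Im F_{\nu,k}$ on $bQ$, localizes the zero of $h_\nu$ \emph{and} justifies the $g_\nu$-to-$h_\nu$ comparison simultaneously. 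Working with the square inside a single strip (rather than a disc of radius $2\pi/\nu$, whose diameter exceeds the strip height $\pi/\nu$) also avoids the neighboring-strip ambiguity that you flag at the end of your proposal. In short: the two missing ingredients are the $g_\nu$ vs.\ $h_\nu$ comparison and the lower bound on $|h_\nu|$ that makes it possible, and neither is routine.
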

\proof
Observe that $|\Im\rho_{k,\nu}^\star|\gg {1\over \nu}$.  Using (\ref{e:rho}), we deduce that 
$\log \big|f\big({k\pi i\over \nu}\big)\big|\geq -{2\over 3}\log\nu$. Hence,
$$\Re\rho_{\nu,k}^\star\leq - {2\log\nu +\const\over 3\nu}\cdot$$
Therefore, the vertical lines of equations
$$\Re\rho -\Re\rho_{\nu,k}^\star=\pm{\pi\over 2\nu}$$
 are contained in $\Omegabf_c^\nu$.

Let $Q$ denote the square of size ${\pi\over \nu}\times {\pi\over \nu}$
limited by the above lines and the horizontal part of $b\Omegabf_c^\nu(k)$.
It is enough to show that $\rho_{\nu,k}$ belongs to $Q$. Arguing as in Lemmas \ref{l:h_function}, \ref{l:g_h} and \ref{l:sol_unique}, we only have to check that $|h_\nu(z)|$ is bounded below by a positive constant when $\rho$ is on the vertical part of $bQ$ and that $\Re F_{\nu,k}(\rho)$ is positive (resp. negative) on the right (resp. left) vertical part of $bQ$. 

We only consider the case where $\rho$ is on the left vertical part of $bQ$. The other case can be obtained in the same way.  We have
\begin{equation} \label{e:Re_rho}
\Re\rho -\Re\rho_{\nu,k}^\star=-{\pi\over 2\nu}\cdot
\end{equation}
So $|\rho-\rho_{\nu,k}^\star|\lesssim {1\over \nu} \ll |\rho_{\nu,k}^\star|$. 
As in Lemma \ref{l:rho_star}, we obtain that
\begin{equation} \label{e:f_rho}
|\log f(\rho)-\log f(\rho_{\nu,k}^\star)|\ll \nu^{-1/5}.
\end{equation}

It follows from (\ref{e:Re_rho}) and (\ref{e:f_rho}) that
$${h_\nu(\rho)+\sigma\over h_\nu(\rho_{\nu,k}^\star)+\sigma}=e^\pi+O(\nu^{-1/5}).$$
By Lemma \ref{l:rho_star}, $h_\nu(\rho_{\nu,k}^\star)$ is small. Therefore, the last identity implies that 
$|h_\nu(\rho)|$ is bounded below by a positive constant.

We also deduce from (\ref{e:Re_rho}) and the definition of $F_{\nu,k}(\rho)$ that
$$\Re F_{\nu,k}(\rho)-\Re F_{\nu,k}(\rho_{\nu,k}^\star)\leq -{\pi\over 2\nu} +{1\over 2\nu}|\log f(\rho)-\log f(\rho_{\nu,k}^\star)|=-{\pi\over 2\nu} + O(\nu^{-6/5}).$$
This and Lemma \ref{l:rho_star} imply that $\Re F_{\nu,k}(\rho)<0$. 
The result follows.
\endproof

\begin{lemma} \label{l:near_zero_Bessel}
Let $\epsilon_0$ be the constant given in Lemma  \ref{l:zero_Bessel_bis}. Let $\epsilon>0$ be any fixed constant.
Assume that $\nu$ is large enough and $\nu^{1/4}\leq k\leq \epsilon_0 \nu^4$. Then,  we have
$$|\nu z_{\nu,k}-\nu \widetilde z_{\nu,k}|\leq  |\nu \widetilde z_{\nu,k}|^{1/4+\epsilon}.$$
\end{lemma}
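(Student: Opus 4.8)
The plan is to compare the two sequences $z_{\nu,k}$ (zeros of $g_\nu$, equivalently solutions of the transcendental equation $F_{\nu,k}(\rho)=0$ modulo the small perturbation $h_\nu$ versus $g_\nu$) and $\widetilde z_{\nu,k}$ (zeros of $J_\nu(\nu z)$ of second type) through their common asymptotic description. Both sequences are, up to errors of size $O(1/\nu)$ in the $\rho$-variable, indexed arithmetic progressions along the imaginary $\rho$-axis: Lemma \ref{l:zero_Bessel_bis} gives $\widetilde\rho_{\nu,k}=\bigl({3\pi i\over 4\nu}+{k\pi i\over\nu}\bigr)+O(1/\nu)$, while Lemmas \ref{l:sol_app} and \ref{l:rho_star} give $\rho_{\nu,k}=\rho^\star_{\nu,k}+O(1/\nu)$ with $\rho^\star_{\nu,k}=-{\log\sigma\over 2\nu}-{\log\nu\over\nu}-{1\over 2\nu}\log f\bigl({k\pi i\over\nu}\bigr)+{k\pi i\over\nu}$. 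First I would establish that $|\rho_{\nu,k}-\widetilde\rho_{\nu,k}|\lesssim (\log\nu+|\log f(k\pi i/\nu)|)/\nu$: the imaginary parts of both agree with ${k\pi i/\nu}$ up to $O(1/\nu)$, and the real parts differ only by the explicit logarithmic terms in $\rho^\star_{\nu,k}$ (note $\widetilde\rho_{\nu,k}$ is purely imaginary). Since $\nu^{1/4}\le k$, the quantity $\log f(k\pi i/\nu)$ is controlled by (\ref{e:rho}), giving $|\log f(k\pi i/\nu)|\lesssim \log\nu$ when $k\lesssim\nu$, and $|\log f(k\pi i/\nu)|\lesssim \log(k/\nu)\lesssim\log k$ when $k\gtrsim\nu$; either way $|\rho_{\nu,k}-\widetilde\rho_{\nu,k}|\lesssim (\log k)/\nu$.

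Next I would transfer this estimate from the $\rho$-plane to the $z$-plane using the map $\rho\mapsto z$, which is the inverse of $\rho$. The relevant derivative is ${\partial z\over\partial\rho}=-{z\over\sqrt{1-z^2}}$ from (\ref{e:rho}). On the segment joining $\rho_{\nu,k}$ and $\widetilde\rho_{\nu,k}$ — which stays in $\overline\C_+$ with imaginary part close to $k\pi/\nu$ — I need an upper bound for $|{\partial z\over\partial\rho}|=|z|/|1-z^2|^{1/2}$. Writing $t=\widetilde z_{\nu,k}\ge 1$, so $\rho(t)=i(\sqrt{t^2-1}+\arccos(1/t))$, one has $|\rho|\asymp t$ for $t$ bounded away from $1$ and $|\rho|\asymp (t-1)^{3/2}$ near $t=1$; correspondingly $|1-z^2|^{1/2}\asymp t$ for large $t$ and $|1-z^2|^{1/2}\asymp(t-1)^{1/2}\asymp|\rho|^{1/3}$ near $1$. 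Combining, $|{\partial z\over\partial\rho}|\lesssim 1$ for $t$ bounded and $|{\partial z\over\partial\rho}|\lesssim |z|^{2/3}/|z|^{1/3}\cdot(\dots)$ — more precisely, for $t$ large $|z|/|1-z^2|^{1/2}\to 1$, and for $t$ near $1$ it is $\lesssim |\rho|^{-1/3}\lesssim k^{-1/3}\nu^{1/3}\lesssim \nu^{1/4}$ since $k\ge\nu^{1/4}$. Thus on the whole range $|{\partial z\over\partial\rho}|\lesssim \max(1,\nu^{1/4})\lesssim \nu^{1/4}$, giving
$$|z_{\nu,k}-\widetilde z_{\nu,k}|\lesssim \nu^{1/4}\,|\rho_{\nu,k}-\widetilde\rho_{\nu,k}|\lesssim \nu^{1/4}\,{\log k\over\nu}=\nu^{-3/4}\log k.$$
Multiplying by $\nu$: $|\nu z_{\nu,k}-\nu\widetilde z_{\nu,k}|\lesssim \nu^{1/4}\log k$. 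Finally I would compare this with the target $|\nu\widetilde z_{\nu,k}|^{1/4+\epsilon}$: since $\nu\widetilde z_{\nu,k}\asymp\nu|\widetilde\rho_{\nu,k}|\asymp k$ (for $t$ large, shown as in the proof of Proposition \ref{p:Weyl}(b)), or $\nu\widetilde z_{\nu,k}\gtrsim\nu$ in all cases because $k\ge\nu^{1/4}$ forces $\widetilde z_{\nu,k}$ bounded below and in fact $\nu\widetilde z_{\nu,k}\gtrsim \nu^{1/4}\cdot\nu^{?}$ — one checks $\nu\widetilde z_{\nu,k}\gtrsim \max(\nu, k)$, hence $|\nu\widetilde z_{\nu,k}|^{1/4+\epsilon}\gtrsim \nu^{\epsilon}\max(\nu^{1/4},k^{1/4})$, which dominates $\nu^{1/4}\log k$ for $\nu$ large (using $k\le\epsilon_0\nu^4$ so $\log k\le 4\log\nu+O(1)\ll\nu^\epsilon$). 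This yields the claimed bound.

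The main obstacle I anticipate is the uniform control of ${\partial z\over\partial\rho}$ near $z=1$ (i.e. $k$ close to its lower cutoff $\nu^{1/4}$), where this derivative blows up like $|\rho|^{-1/3}$; the hypothesis $k\ge\nu^{1/4}$ is precisely calibrated so that $|\rho|\gtrsim k\pi/\nu\gtrsim\nu^{-3/4}$ and the blow-up is only $\lesssim\nu^{1/4}$, which is then absorbed by the factor $|\nu\widetilde z_{\nu,k}|^{\epsilon}$ on the right-hand side. A secondary technical point is justifying that the segment between $\rho_{\nu,k}$ and $\widetilde\rho_{\nu,k}$ genuinely lies in the region where (\ref{e:rho}) and the above asymptotics for $\zeta,\rho$ are valid (imaginary part near $k\pi/\nu$, real part $O((\log\nu)/\nu)$, hence $\Im\rho\ge-\pi$ away from the problematic point $-1$), which follows from the half-strip localization $\rho_{\nu,k}\in\Omegabf^\nu_c(k)$ established in Lemma \ref{l:sol_unique} together with $\widetilde\rho_{\nu,k}\in i\R_+$.
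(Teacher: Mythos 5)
Your proposal is correct and follows essentially the same route as the paper's proof: pass to the $\rho$-plane using Lemmas \ref{l:zero_Bessel_bis} and \ref{l:sol_app} to get $|\rho_{\nu,k}-\widetilde\rho_{\nu,k}|\lesssim(\log k)/\nu$, then transfer back via the derivative $\partial z/\partial\rho$, estimated as $O(1)$ away from $z=1$ and $O(|\rho|^{-1/3})\lesssim(\nu/k)^{1/3}\lesssim\nu^{1/4}$ near $z=1$. The only stylistic difference is that you collapse the paper's two-case split ($k\gtrsim\nu$ versus $k\lesssim\nu$) into a single global bound $\nu^{1/4}\log k$, which is cruder than the paper's but still comfortably below $|\nu\widetilde z_{\nu,k}|^{1/4+\epsilon}\geq\nu^{1/4+\epsilon}$ since $\widetilde z_{\nu,k}\geq 1$ and $\log k\lesssim\log\nu\ll\nu^\epsilon$.
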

\proof
By Lemma \ref{l:zero_Bessel_bis}, we have
$$\Big|\widetilde\rho_{\nu,k}-\big({3\pi i\over 4\nu} +{k\pi i\over \nu}\big)\Big|\leq {1\over\nu}\cdot$$
Since $k$ is positive, $f({k\pi i\over \nu})$ is real and negative. Using Lemma \ref{l:sol_app} and the definition of $\rho_{\nu,k}^\star$, we deduce that 
\begin{equation} \label{e:rho:rho}
|\rho_{\nu,k}-\widetilde\rho_{\nu,k}|\leq | \rho_{\nu,k}-\rho^\star_{\nu,k}| + |\rho^\star_{\nu,k}-\widetilde\rho_{\nu,k}|
\lesssim {1\over \nu} + {\log k\over \nu}\lesssim {\log k\over \nu} \cdot
\end{equation}
We distinguish two cases. 

Assume first that $k$ is bounded below by a fixed small constant times $\nu$. Then,
$|\rho_{\nu,k}|$ and $|\widetilde \rho_{\nu,k}|$ are bounded below by a positive constant. It follows that
${\partial z\over\partial\rho}$ is bounded on the segment joining $\rho_{\nu,k}$ and $\widetilde \rho_{\nu,k}$. 
This and (\ref{e:rho:rho}) imply that
$$|\nu z_{\nu,k}-\nu \widetilde z_{\nu,k}|\lesssim \nu|\rho_{\nu,k}-\widetilde\rho_{\nu,k}| \lesssim \log k\lesssim \log|\nu\widetilde \rho_{k,\nu}|.$$
On the other hand, if $\rho$ is in  $i\R_+$, we have $z\in [1,+\infty)$ and
$$\rho=\Big(\sqrt{z^2-1}+\arccos {1\over z}\Big) i.$$
Therefore, $|\widetilde\rho_{\nu,k}|\lesssim |\widetilde z_{\nu,k}|$. We conclude that 
$$|\nu z_{\nu,k}-\nu \widetilde z_{\nu,k}|\lesssim \log |\nu \widetilde z_{\nu,k}|.$$

Assume now that $k$ is bounded above by a small constant times $\nu$. Then, $|\rho_{\nu,k}|$, $|\widetilde \rho_{\nu,k}|$ are small and $z_{\nu,k}$, $\widetilde z_{\nu,k}$ are close to 1 . By (\ref{e:rho}), we have $\big|{\partial z \over \partial\rho}\big| \lesssim |\rho|^{-1/3}$ for $\rho$ small. In particular, we have $\big|{\partial z \over \partial\rho}\big| \lesssim \big({\nu\over k}\big)^{1/3}$
for $\rho$ in
the segment joining $\rho_{\nu,k}$ and $\widetilde \rho_{\nu,k}$.  Hence, since $k\geq \nu^{1/4}$, we obtain
$$|\nu z_{\nu,k}-\nu \widetilde z_{\nu,k}|\lesssim \nu |\rho_{\nu,k}-\widetilde\rho_{\nu,k}|  \big({\nu\over k}\big)^{1/3}
\lesssim (\log k)({\nu\over k})^{1/3}\ll \nu^{1/4+\epsilon}.$$
This completes the proof of the lemma.
\endproof

\noindent
{\bf End of the proof of Proposition \ref{p:number_sol_1}.} 
(a) Since $\Re\rho_{\nu,k} < 0$, $z_{\nu,k}$ belongs to $\C_+\setminus \K_+$. In particular, we have $|z_{\nu,k}|>{1\over 2}$. Therefore, when $l\geq 2r$, we have $\nu\geq 2r$ and $z_{\nu,k}\not\in\overline\D({r\over\nu})$. It follows that $n^+_l(r)=0$ for $l>2r$. 

\medskip\noindent
(b) Assume that $l\leq 2r$ and $k\geq cr$ for some constant $c>0$ large enough. Then,  $\Im\rho_{\nu,k}$ is bounded below by a large positive constant times $r\over\nu$. Using the definition of $\rho$, we obtain that $|z_{\nu,k}|$ is almost equal to $|\rho_{\nu,k}|$. In particular, $z_{\nu,k}$ does not belong to $\overline\D({r\over\nu})$. It follows that $n^+_l(r)\leq cr$. 

\medskip\noindent
(c) 
Observe that by (b) and Proposition \ref{p:Weyl}, 
if $l\geq L_r:=2(c\epsilon_0^{-1}r)^{1/4}$ and $k\geq \epsilon_0\nu^4$, then $z_{\nu,k}$ and $\widetilde z_{\nu,k}$ do not belong to $\overline\D({2r\over\nu})$. Hence, by  Lemma \ref{l:near_zero_Bessel},
we have for  $l\geq L_r$
$$n_l^+(r)\geq m_l^+(r-r^{1/4+\epsilon}) -\nu^{1/4} = m_l^+(r-r^{1/4+\epsilon})+O(l^{1/4}).$$
This, together with Lemma \ref{l:H_l} and Proposition \ref{p:Weyl}, implies that
\begin{eqnarray*}
\sum_{0\leq l\leq 2r} n_l^+(r)(\dim H_l) & \geq & \sum_{L_r\leq l\leq 2r} n_l^+(r)(\dim H_l) \\
& \geq & \sum_{L_r\leq l\leq 2r} \big[m_l^+(r-r^{1/4+\epsilon})+O(l^{1/4})\big](\dim H_l) \\
& \geq & {\volume(\B)^2\over (2\pi)^d}r^d + O(r^{d-{3/4}+\epsilon}).
\end{eqnarray*}

For the converse estimate, in the same way, we have for  $l\geq L_r$
$$n_l^+(r)\leq m_l^+(r+r^{1/4+\epsilon}) +\nu^{1/4} = m_l^+(r+r^{1/4+\epsilon})+O(l^{1/4}).$$
Using parts (a) and (b), we obtain
\begin{eqnarray*}
\sum_{0\leq l\leq 2r} n_l^+(r)(\dim H_l) & = & \sum_{L_r\leq l\leq 2r} n_l^+(r)(\dim H_l)  + O(r^{d/4+{3/4}}) \\
& \leq & \sum_{0\leq l\leq 2r} \big[m_l^+(r+r^{1/4+\epsilon})+O(l^{1/4})\big](\dim H_l) + O(r^{d/4+{3/4}}) \\
& \leq & {\volume(\B)^2\over (2\pi)^d}r^d + O(r^{d-{3/4}+\epsilon}).
\end{eqnarray*}
This completes the proof of the proposition.
\hfill $\square$

\bigskip

Define for  $-{\nu\over 2}-2<k\leq 0$
$$\widehat z_{\nu,k}:=\rho^{-1}\big({k \pi i\over\nu}\big).$$
This point belongs to $\partial\K_+$ and $\Re \widehat z_{\nu,k}\geq 0$ if and only if $k\geq -{\nu\over 2}$.  
We have the following lemma.

\begin{lemma} \label{l:count_negative}
Let $\epsilon>0$ be a fixed constant. Assume that $\nu$ is large enough and
$-{\nu\over 2}+2<k\leq -\nu^{1/4}$. Then, we have 
$$|\nu z_{\nu,k}- \nu \widehat z_{\nu, k}|\leq |\nu \widehat z_{\nu,k}|^{1/4+\epsilon}.$$
\end{lemma}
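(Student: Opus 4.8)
\textbf{Proof strategy for Lemma \ref{l:count_negative}.}

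The plan is to mimic the proof of Lemma \ref{l:near_zero_Bessel}, replacing the comparison point $\widetilde z_{\nu,k}$ (a zero of the Bessel function) by the boundary point $\widehat z_{\nu,k}=\rho^{-1}(k\pi i/\nu)$ on $\partial\K_+$, and to control the difference through the intermediate approximant $\rho^\star_{\nu,k}$. First I would record that for $-\nu/2+2<k\le -\nu^{1/4}$ the point $k\pi i/\nu$ lies in the imaginary segment $i(-\pi/2,0)$ bounded away from $0$ by at least $\pi/\nu^{3/4}$, so $f(k\pi i/\nu)$ has modulus bounded below (and in fact, since $k\pi i/\nu$ is purely imaginary with the right range, $\arg f$ is controlled as in Lemma \ref{l:h_function}); consequently $|\log f(k\pi i/\nu)|\lesssim \log\nu$ and $\Re\rho^\star_{\nu,k}\le -(2\log\nu+\mathrm{const})/(3\nu)<0$, which places $\rho^\star_{\nu,k}$ inside $\Omegabf^\nu_c$. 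By Lemma \ref{l:rho_star} and Lemma \ref{l:sol_app} (both of which apply since $|k|\ge\nu^{1/4}$ and $k\ge-\nu/2+2$), I get the two-sided estimate
\begin{equation*}
|\rho_{\nu,k}-\tfrac{k\pi i}{\nu}|\le|\rho_{\nu,k}-\rho^\star_{\nu,k}|+|\rho^\star_{\nu,k}-\tfrac{k\pi i}{\nu}|\lesssim\frac{1}{\nu}+\frac{\log\nu}{\nu}\lesssim\frac{\log\nu}{\nu}.
\end{equation*}

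Since $\widehat z_{\nu,k}=\rho^{-1}(k\pi i/\nu)$ and $z_{\nu,k}=\rho^{-1}(\rho_{\nu,k})$, the next step is to convert the $\rho$-distance into a $z$-distance by bounding $|\partial z/\partial\rho|$ along the segment from $\rho_{\nu,k}$ to $k\pi i/\nu$. Both endpoints have modulus $\gtrsim |k|/\nu$ (indeed $k\pi i/\nu$ does, and $\rho_{\nu,k}$ is within $O(\log\nu/\nu)$ of it, so for $|k|\ge\nu^{1/4}$ the segment stays at distance $\gtrsim|k|/\nu$ from $0$, hence away from the branch point of $\rho$ at $z=1$). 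By (\ref{e:rho}) we have $|\partial z/\partial\rho|\lesssim|\rho|^{-1/3}\lesssim(\nu/|k|)^{1/3}$ on this segment, so
\begin{equation*}
|\nu z_{\nu,k}-\nu\widehat z_{\nu,k}|\lesssim \nu\,|\rho_{\nu,k}-\tfrac{k\pi i}{\nu}|\,\Big(\frac{\nu}{|k|}\Big)^{1/3}\lesssim (\log\nu)\Big(\frac{\nu}{|k|}\Big)^{1/3}\ll \nu^{1/4+\epsilon},
\end{equation*}
using $|k|\ge\nu^{1/4}$ in the last step exactly as in Lemma \ref{l:near_zero_Bessel}. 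As there, I should split into the subcase $|k|$ comparable to $\nu$ (where $|\rho|$ and $|\partial z/\partial\rho|$ are both $O(1)$ and one gets the sharper bound $\lesssim\log\nu$) and the subcase $|k|\ll\nu$ (where $z_{\nu,k},\widehat z_{\nu,k}$ are near $1$ and the $|\rho|^{-1/3}$ estimate is used), so that the argument covers the full range $-\nu/2+2<k\le-\nu^{1/4}$. Finally I would note $|\widehat z_{\nu,k}|\ge 1/2$ (it lies on $\partial\K_+$), so $\nu^{1/4+\epsilon}\lesssim|\nu\widehat z_{\nu,k}|^{1/4+\epsilon}$ (absorbing constants and shrinking $\epsilon$ slightly), which gives the stated inequality.

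The main obstacle I anticipate is the uniform control of $\log f$ near the endpoint $k\pi i/\nu$: when $k$ is close to $-\nu/2$ the point $k\pi i/\nu$ approaches $-\pi/2$, i.e.\ $z$ approaches the imaginary axis $\Re z=0$, which is still comfortably away from the branch point $z=1$, so no genuine singularity arises; but one must check that the ``$\arg f\in(-\pi,\delta)$'' mechanism underlying Lemmas \ref{l:h_function}--\ref{l:sol_unique} (used through Lemma \ref{l:sol_app}) still applies for negative $k$ down to $-\nu/2+2$, which it does precisely because the constraint $k>-\nu/2-2$ keeps $\Im\rho$ essentially above $-\pi/2$. The other delicate point is ensuring the segment of integration for $\partial z/\partial\rho$ does not wander near $\rho=0$ (equivalently $z=1$): this is guaranteed by $|k|\ge\nu^{1/4}$ together with the $O(\log\nu/\nu)$ closeness of $\rho_{\nu,k}$ to $k\pi i/\nu$, so the whole segment has $|\rho|\gtrsim|k|/\nu\gtrsim\nu^{-3/4}$.
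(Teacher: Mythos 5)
Your argument is correct and is essentially the same as the paper's: both reduce the $z$-distance to the $\rho$-distance via Lemma \ref{l:sol_app} and the explicit formula for $\rho^\star_{\nu,k}$, bound $|\partial z/\partial\rho|\lesssim|\rho|^{-1/3}\lesssim(\nu/|k|)^{1/3}$ using (\ref{e:rho}), and close with $|\widehat z_{\nu,k}|\geq 1/2$ since $\widehat z_{\nu,k}\in\partial\K_+$. The paper's version is slightly more terse (it does not split into the $|k|\asymp\nu$ and $|k|\ll\nu$ subcases, since the single estimate $|\partial z/\partial\rho|\lesssim|\rho|^{-1/3}$ already covers the whole range of bounded $\rho$), but your extra checks about the segment staying away from $\rho=0$ and about the applicability of Lemma \ref{l:sol_app} for negative $k$ are valid and do not change the substance.
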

\proof
Observe that  $\big|{k\pi i\over \nu}\big|$ is bounded by ${\pi\over 2}$. 
By Lemma \ref{l:sol_app},  $|\rho_{\nu,k}|$ is also bounded by ${\pi\over 2}$ plus a small constant. 
Hence, by (\ref{e:rho}), we have for $\rho$ in the segment joining $\rho_{\nu,k}$ and ${k\pi i\over \nu}$
$$\Big|{\partial z\over \partial \rho}\Big| \lesssim |\rho|^{-1/3}\lesssim \Big({\nu\over |k|}\Big)^{1/3}.$$ 
Moreover,
$$\Big|\rho_{\nu,k}-{k\pi i\over \nu}\Big| \lesssim {\log \nu\over \nu}\cdot$$ 
It follows that 
$$|\nu z_{\nu,k}- \nu \widehat z_{\nu, k}|\leq (\log\nu) \Big({\nu\over |k|}\Big)^{1/3}\ll \nu^{1/4+\epsilon}\lesssim (\nu\widehat z_{\nu,k})^{1/4+\epsilon}$$
since $\widehat z_{\nu,k}\in \partial\K_+$.  The lemma follows. 
\endproof

\noindent
{\bf End of the proof of Proposition \ref{p:number_sol_2}.} 
The fact that $n^-_l(r)=0$ for $l>2r$ is obtained as in Proposition \ref{p:number_sol_1}. 
Note that by definition, we have $n^-_l(r)\leq\nu\lesssim l$. 
We prove now the second assertion in the proposition.

Denote by $m^-_l(r)$ the number of integers $k$ such that $-{\nu\over 2}+2\leq k\leq 0$ and $|\widehat z_{\nu,k}|\leq {r\over \nu}$. Since $\widehat z_{\nu,k}\in \partial \K_+$, we have $m^-_l(r)=0$ when $l>2r$.
Observe that $\rho$ defines a diffeomorphism between $\partial\K_+$ and $i[0,\pi]$. 
So if we define 
$$\Gamma_{\nu,r}:=\Big\{z\in\partial\K_+:\  |z|\leq {r\over \nu}, \Re z>0,\Im z>0\Big\}$$
then we have
$$\Big|m_l^-(r)  -  {\nu\over \pi} \length(\rho(\Gamma_{\nu,r}))\Big| \leq 3.$$
Thus,
\begin{equation} \label{e:m_negative}
\Big|m_l^-(r)- {\nu\over \pi} \int_{\Gamma_{\nu,r}} id\rho\Big|\leq 3.
\end{equation}

The last inequality implies that $m^-_l(r)\lesssim\nu\lesssim l$. Observe also that when $l>2r$ we have $\Gamma_{\nu,r}=\varnothing$.
Define $\Gamma:=\{z\in \partial \K_+:\ \Re(z)>0\}$ and $r':=r-r^{1/4+\epsilon}$. 
By Lemma \ref{l:count_negative}, we have 
$$n^-_l(r)\geq m^-_l(r')-\nu^{1/4} = m^-_l(r')+ O(l^{1/4}).$$
We deduce from the above discussion that 
\begin{eqnarray*}
\sum_{0\leq l\leq 2r} n^-_l(r) (\dim H_l) 
& \geq &  \sum_{0\leq l\leq 2r} \big(m^-_l(r')+O(l^{1/4})\big) (\dim H_l)\\
& = &  \sum_{l\geq 0} m^-_l(r')(\dim H_l)+O(r^{d-3/4}).
\end{eqnarray*}

By (\ref{e:m_negative}), the last sum is equal to
\begin{eqnarray*}
\sum_{l\geq 0} {2l^{d-1}\over \pi (d-2)!} \int_{\Gamma_{\nu,r'}} id\rho + O(r^{d-3/4}) 
& = &  {2\over \pi (d-2)!}\int_\Gamma  id\rho \sum_{\nu\leq {r'\over |z|}} l^{d-1}+ O(r^{d-3/4})\\
& = &  {1\over \pi (d-2)!}\int_{\partial\K_+}  id\rho \sum_{\nu\leq {r'\over |z|}} l^{d-1}+ O(r^{d-3/4})\\
& = &  {r'^d+O(r'^{d-1})\over \pi d(d-2)!}\int_{\partial\K_+}  {id\rho\over |z|^d} + O(r^{d-3/4})\\
& = &  {r^d\over \pi d(d-2)!}\int_{\partial\K_+}  {id\rho\over |z|^d} + O(r^{d-3/4+\epsilon}).
\end{eqnarray*}
We conclude that 
$$\sum_{0\leq l\leq 2r} n^-_l(r) (\dim H_l) \geq {r^d\over \pi d(d-2)!}\int_{\partial\K_+}  {|1-z^2|^{1/2}\over |z|^{d+1}}|dz| + O(r^{d-3/4+\epsilon}).$$

We obtain the converse inequality in the same way using that 
$$n^-_l(r)\leq m^-_l(r'')+\nu^{1/4} = m^-_l(r'')+ O(l^{1/4})$$
with $r'':=r+r^{1/4+\epsilon}$. This completes the proof of the proposition.
\hfill $\square$


\section{Generic potentials in a holomorphic family} \label{s:holo}

In this section, we prove Theorem \ref{t:main_2}. We need the following 
result which relates the asymptotic behavior of $n_V(r)$ and of $N_V(r)$. It holds for any bounded complex potential $V$ with compact support.

\begin{proposition} \label{p:n:N}
Let $\delta$ and $A$ be strictly positive constants such that $\delta<d$. Then, we have $(a)\Rightarrow (b)\Rightarrow (c)$, where
\begin{enumerate}
\item[]{ \rm (a)}   $\qquad n_V(r)=A r^d+ O(r^{d-\delta})\quad \mbox{as}\quad r\to\infty$;
\item[]{ \rm (b)}   $\qquad N_V(r)= \frac{Ar^d}{d} + O(r^{d-\delta})\quad \mbox{as}\quad r\to \infty$;
\item[]{ \rm (c)}   $\qquad n_V(r)=A r^d+ O(r^{d-\frac{\delta}{2}})\quad \mbox{as}\quad r\to\infty$.
\end{enumerate}
\end{proposition}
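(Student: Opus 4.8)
The plan is to exploit the definition $N_V(r) = \int_0^r \frac{n_V(t) - n_V(0)}{t}\, dt$ and treat the two implications as, respectively, an elementary integration and a Tauberian-type recovery argument.

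\textbf{Implication (a) $\Rightarrow$ (b).} This is the easy direction: I would simply integrate. Write $n_V(t) = A t^d + E(t)$ with $|E(t)| \leq C t^{d-\delta}$ for $t$ large (and $E$ bounded for $t$ small, since $n_V$ counts finitely many resonances in any disc). Then
\begin{equation*}
N_V(r) = \int_0^r \frac{A t^d + E(t) - n_V(0)}{t}\, dt = \frac{A r^d}{d} + \int_0^r \frac{E(t) - n_V(0)}{t}\, dt.
\end{equation*}
Splitting the last integral at a fixed large constant $t_0$, the piece over $[0,t_0]$ is $O(1)$ (the integrand is integrable near $0$ because $n_V(0) - n_V(0) + E(t) \to 0$; more carefully one uses that $n_V(t) - n_V(0)$ vanishes for $t$ small so there is no singularity at $0$), and the piece over $[t_0, r]$ is bounded by $C \int_{t_0}^r t^{d-\delta-1}\, dt \leq C' r^{d-\delta}$ since $\delta < d$. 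This gives (b).

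\textbf{Implication (b) $\Rightarrow$ (c).} This is the substantive part. The point is that $n_V$ is monotone nondecreasing, so an $O(r^{d-\delta})$ control on the averaged quantity $N_V$ can be upgraded to an $O(r^{d-\delta/2})$ control on $n_V$ itself by a standard monotonicity trick. For any $h > 0$, monotonicity gives
\begin{equation*}
n_V(r) - n_V(0) \leq \frac{1}{\log(1 + h/r)} \int_r^{r+h} \frac{n_V(t) - n_V(0)}{t}\, dt = \frac{N_V(r+h) - N_V(r)}{\log(1+h/r)},
\end{equation*}
and similarly from below with the interval $[r-h, r]$. Using (b), $N_V(r+h) - N_V(r) = \frac{A}{d}\big((r+h)^d - r^d\big) + O(r^{d-\delta}) = A h r^{d-1} + O(h^2 r^{d-2}) + O(r^{d-\delta})$, while $\log(1+h/r) = h/r + O(h^2/r^2)$. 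Dividing, the main term yields $A r^d$ plus an error of order $h r^{d-1} + r^{d-\delta} \cdot (r/h)$. Optimizing, one chooses $h$ so that $h r^{d-1} \sim r^{d-\delta}\cdot r/h$, i.e. $h \sim r^{1 - \delta/2}$, which makes both error terms $O(r^{d - \delta/2})$; one checks the lower-order terms $O(h^2 r^{d-2})$ and $O(h^2/r^2)$ contribute nothing worse. The lower bound is symmetric. This establishes (c).

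\textbf{Main obstacle.} The only delicate points are bookkeeping ones: making sure the integral in (a) $\Rightarrow$ (b) has no genuine singularity at $t = 0$ (handled by the fact that $n_V(t) = n_V(0)$ for small $t$, so the support of the integrand's "bad" part is away from the origin), and, in (b) $\Rightarrow$ (c), verifying that the choice $h = r^{1-\delta/2}$ keeps all the discarded second-order terms within $O(r^{d-\delta/2})$ and that $h = o(r)$ (true since $\delta > 0$) so the Taylor expansions of $(r\pm h)^d$ and $\log(1 \pm h/r)$ are legitimate. None of this is conceptually hard; it is the classical passage between a counting function and its logarithmic average.
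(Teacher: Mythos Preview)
Your proposal is correct and follows essentially the same route as the paper: (a)$\Rightarrow$(b) by direct integration, and (b)$\Rightarrow$(c) by the monotonicity trick with the window $h\sim r^{1-\delta/2}$. The only cosmetic difference is that the paper bounds $\frac{1}{t}$ by $\frac{1}{r+\alpha}$ on $[r,r+\alpha]$ (so no logarithm or Taylor expansion of $\log(1+h/r)$ is needed), whereas you evaluate $\int_r^{r+h}\frac{dt}{t}$ exactly; both lead to the same optimization and the same error $O(r^{d-\delta/2})$.
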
    
\begin{proof}
The proof is similar to Lemma 1 in \cite{Stefanov}. 
The implication $(a)\Rightarrow (b)$ follows from the definition of $N_V(r)$. 
Assume that the property (b) holds. We show that (c) is also true. We only have to consider $r$ large enough.

Choose a constant $c>0$ such that $|dN_V(r)-Ar^d|\le  c r^{d-\delta}$.  Define $\alpha:=c r^{1-{\delta\over 2}}.$ Since $n_V(r)$ is increasing in $r$, we have for $r$ large enough
$$ \frac{n_V(r)}{r+\alpha}\le  {1\over \alpha}\int_{r}^{r+\alpha}\frac{n_V(t)}{t}dt \leq {N(r+\alpha)-N(r) \over \alpha}+{n_V(0)\over r} \cdot$$ 
It follows from the above estimate on $dN_V(r)-Ar^d$ that 
\begin{eqnarray*}
{n_V(r)\over r+\alpha} & \leq & {A\over d}\big[(r+\alpha)^d-r^d\big]+{2c(r+\alpha)^{d-\delta}\over d\alpha} +{n_V(0)\over r}\\
& \leq &  A(r+\alpha)^{d-1}+{2c(r+\alpha)^{d-\delta}\over d\alpha} +{n_V(0)\over r}\\
& \leq &  A(r+\alpha)^{d-1}+O(r^{d-1-{\delta\over 2}}).
\end{eqnarray*}
Hence, 
$$n_V(r)\leq A(r+\alpha)^{d}+O(r^{d-{\delta\over 2}})\leq Ar^d+O(r^{d-{\delta\over 2}}).$$

In the same way, using the inequalities
$${N(r)-N(r-\alpha) \over \alpha}
\le  {1\over \alpha}\int_{r-\alpha}^{r}\frac{n_V(t)}{t}dt \le \frac{n_V(r)}{r-\alpha}$$
we obtain
$$n_V(r)\geq Ar^d+O(r^{d-{\delta\over 2}}).$$
The proposition follows.
\end{proof}

Following Christiansen \cite{Christiansen4}, we will reduce the problem to the study of a family of plurisubharmonic (p.s.h. for short) functions. The reader will find in 
Demailly \cite{Demailly} and Lelong-Gruman \cite{LelongGruman}  basic properties of p.s.h. functions.

Recall that a function $\Phi:\Omega\to\R\cup\{-\infty\}$ is p.s.h. if it is not identically equal to $-\infty$ and if its restriction to each holomorphic disc is either subharmonic or equal to $-\infty$. A subset of $\Omega$ is pluripolar if it is contained in the pole set $\{\Phi=-\infty\}$ of a p.s.h. function $\Phi$.
The following lemma is crucial for the proof of Theorem \ref{t:main_2}.

\begin{lemma} \label{l:psh}
Let $\Phi_n$, $n=1,2,\ldots$, be a sequence of p.s.h. functions on a domain $\Omega$ of $\C^p$. Assume there are constants $c>0$ and $\gamma>1$ such that $\Phi_n\leq cn^{-\gamma}$ on $\Omega$ and  $\Phi_n(\vartheta_0)>-cn^{-\gamma}$ for some point $\vartheta_0\in\Omega$. Then for every $\alpha<\gamma-1$ there exists a pluripolar set $E\subset\Omega$ such that $\Phi_n(\vartheta)=o(n^{-\alpha})$ for every $\vartheta\in \Omega\setminus E$.
\end{lemma}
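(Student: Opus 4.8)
The plan is to exploit the Hartogs-type convergence properties of plurisubharmonic functions. The hypotheses say that each $\Phi_n$ is bounded above by $cn^{-\gamma}$ globally and bounded below by $-cn^{-\gamma}$ at the single point $\vartheta_0$. Rescaling, set $\Psi_n := n^{\gamma}\Phi_n \le c$ on $\Omega$ and $\Psi_n(\vartheta_0) \ge -c$. Each $\Psi_n$ is still p.s.h. Fix $\alpha < \gamma - 1$ and pick $\beta$ with $\alpha < \beta < \gamma - 1$. I want to show $\Phi_n(\vartheta) = o(n^{-\alpha})$, i.e. $n^{\alpha}\Phi_n(\vartheta) \to 0$, off a pluripolar set; it suffices to show $n^{\alpha - \gamma}\Psi_n(\vartheta) \to 0$, and since $\Psi_n \le c$ the only issue is a lower bound: I must rule out that $\Psi_n(\vartheta)$ is very negative for infinitely many $n$. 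Precisely, it is enough to find a pluripolar $E$ such that for $\vartheta \notin E$ one has $\Psi_n(\vartheta) \ge -n^{\gamma - \beta}$ for all $n$ large, because then $n^{\alpha-\gamma}\Psi_n(\vartheta) \ge -n^{\alpha-\beta} \to 0$.

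The key step is a quantitative sublevel-set estimate. Consider $u_n := c - \Psi_n \ge 0$, a nonnegative p.s.h. ... wait, $c - \Psi_n$ is not p.s.h.; instead work directly with $\Psi_n \le c$. The standard tool is: if $u$ is p.s.h. on $\Omega$ with $u \le 0$ and $u(\vartheta_0) \ge -M$, then on any fixed compact $K \Subset \Omega$ the sublevel sets $\{u < -t\}$ have Lebesgue measure (indeed capacity) decaying like a negative power of $t$ as $t \to \infty$, with constants depending only on $K$, $\Omega$, $\vartheta_0$ and $M$ — this follows from the submean-value inequality / the fact that a p.s.h. function bounded above is locally integrable with controlled $L^1$ norm, plus a Chebyshev argument, or more sharply from Josefson–Bedford–Taylor capacity estimates (see Lelong–Gruman). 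Apply this to $u = \Psi_n - c$, which satisfies $u \le 0$ and $u(\vartheta_0) \ge -2c$ uniformly in $n$. Then there are constants $C, q > 0$, independent of $n$, such that
$$\Leb\big(\{\vartheta \in K : \Psi_n(\vartheta) < -s\}\big) \le C s^{-q} \quad \text{for all } s > 1.$$

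Now take $s = s_n := n^{\gamma - \beta}$. Since $\gamma - \beta > 0$, $\sum_n s_n^{-q} = \sum_n n^{-q(\gamma-\beta)}$ — this need not converge. To fix this I would instead apply the estimate along a sparse subsequence or, better, use the capacity version together with the standard "pasting" construction: let $E_n := \{\vartheta \in K : \Psi_n(\vartheta) < -n^{\gamma-\beta}\}$; by the capacity estimate $\capacity(E_n, \Omega') \le C' n^{-q(\gamma-\beta)}$ on a slightly larger relatively compact $\Omega'$, but to get summability I should instead choose the threshold more cleverly. The cleanest route: for each $m$, let $E_m := \{\vartheta \in K : \Psi_n(\vartheta) < -m^{\gamma - \beta} \text{ for some } n \text{ with } 2^{m} \le n < 2^{m+1}\}$... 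Actually the most robust approach, and the one I would commit to, is the \emph{negligibility} argument: the function
$$\Phi := \limsup_{n\to\infty} \big(n^{\beta}\Phi_n\big)^*$$
(upper semicontinuous regularization) is either p.s.h. or $\equiv -\infty$ on $\Omega$; since $n^{\beta}\Phi_n \le cn^{\beta - \gamma} \to 0$, we get $\Phi \le 0$, and by a classical theorem (Bedford–Taylor / Lelong–Gruman) the set where $\limsup_n n^{\beta}\Phi_n$ differs from its regularization $\Phi$ is pluripolar. On the complement of that pluripolar set together with $\{\Phi = -\infty\}$ — but I need $\{\Phi = -\infty\}$ itself to be pluripolar, which holds \emph{provided} $\Phi \not\equiv -\infty$, and that is where $\vartheta_0$ enters: $n^{\beta}\Phi_n(\vartheta_0) \ge -cn^{\beta - \gamma} \to 0$, so $\Phi(\vartheta_0) \ge 0$, hence $\Phi$ is a genuine p.s.h. function (not $-\infty$), and in fact $\Phi \equiv 0$ by the maximum principle combined with $\Phi \le 0$. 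Therefore off a pluripolar set $E$ we have $\limsup_n n^{\beta}\Phi_n(\vartheta) = \Phi(\vartheta) = 0$, and since the $\limsup$ of the nonpositive-plus-$o(1)$... more carefully: $n^\beta \Phi_n \le c n^{\beta-\gamma}$, so $\limsup_n n^\beta\Phi_n(\vartheta) \le 0$ always, and off $E$ it equals $0$; hence $n^{\alpha}\Phi_n(\vartheta) = n^{\alpha-\beta}\cdot n^{\beta}\Phi_n(\vartheta) \to 0$ because $n^{\beta}\Phi_n(\vartheta)$ is bounded above by $o(1)$ and its $\limsup$ is $0$ so it is bounded, and $n^{\alpha-\beta} \to 0$. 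This gives $\Phi_n(\vartheta) = o(n^{-\alpha})$ for $\vartheta \in \Omega \setminus E$, as required.

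The main obstacle is making rigorous the statement that the "bad set" $\{\limsup_n n^{\beta}\Phi_n < (\limsup_n n^{\beta}\Phi_n)^*\}$ is pluripolar; this is a known but nontrivial fact about decreasing/oscillating families of p.s.h. functions (it is the content of the negligible-sets theorem in pluripotential theory, cf. Lelong–Gruman or Demailly), and one has to check the family $\{n^{\beta}\Phi_n\}$ is locally uniformly bounded above so that the regularized $\limsup$ is p.s.h. — which is immediate here from $n^{\beta}\Phi_n \le c$. A secondary point requiring care is the use of $\vartheta_0$: one must verify $\Phi \not\equiv -\infty$, for which $\Phi(\vartheta_0) \ge \limsup_n n^\beta \Phi_n(\vartheta_0) \ge \limsup_n(-cn^{\beta-\gamma}) = 0$ suffices. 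Everything else is bookkeeping with the exponents $\alpha < \beta < \gamma - 1$.
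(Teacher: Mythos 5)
There is a genuine gap at the very end of your argument. You reduce to showing that, off a pluripolar set, $\limsup_{n}\,n^{\beta}\Phi_n(\vartheta) = 0$, and then assert that since ``its $\limsup$ is $0$ so it is bounded'', whence $n^{\alpha}\Phi_n(\vartheta)=n^{\alpha-\beta}\cdot n^{\beta}\Phi_n(\vartheta)\to 0$. But $\limsup_n a_n = 0$ does \emph{not} imply the sequence $(a_n)$ is bounded below: take $a_{2k}=0$, $a_{2k+1}=-2^{k}$. So knowing $\limsup_n n^\beta\Phi_n(\vartheta)=0$ tells you nothing about how negative $n^\beta\Phi_n(\vartheta)$ may be along a sparse subsequence, and multiplying by the small factor $n^{\alpha-\beta}$ cannot repair an unboundedly large dip. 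The negligible-sets theorem you invoke controls only where the pointwise $\limsup$ drops below its upper-semicontinuous regularization; it says nothing about the $\liminf$. The earlier sublevel/capacity route you sketched and abandoned runs into the same obstruction: to get an eventual lower bound on $\Phi_n(\vartheta)$ for a fixed $\vartheta$ you would need summability of the capacities of the bad sets (a Borel--Cantelli argument), which, as you noticed, the raw estimate does not give.

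The paper's proof sidesteps all of this by working with the \emph{series} rather than the $\limsup$. After shifting so that $\Phi_n\le 0$, one sets $\Phi:=\sum_{n\ge 1} n^{\alpha}\Phi_n$, which is a decreasing limit of p.s.h.\ functions; the hypothesis at $\vartheta_0$ gives $\Phi(\vartheta_0)\ge -2c\sum n^{\alpha-\gamma}>-\infty$ precisely because $\alpha-\gamma<-1$, so $\Phi$ is p.s.h.\ and $E:=\{\Phi=-\infty\}$ is pluripolar. For $\vartheta\notin E$ the series $\sum n^{\alpha}\Phi_n(\vartheta)$ converges, and since all terms are $\le 0$, the terms must tend to $0$: $n^{\alpha}\Phi_n(\vartheta)\to 0$. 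This is exactly the quantitative control that the $\limsup$ argument fails to provide. To salvage your approach you would in effect have to replace the $\limsup$ construction with this series (or some equivalent Borel--Cantelli device), so I would recommend rewriting along those lines.
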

\proof
Replacing $\Phi_n$ by $\Phi_n-cn^{-\gamma}$ allows us to assume that $\Phi_n\leq 0$. Define
$$\Phi_{[m]}:=\sum_{n=1}^m n^\alpha \Phi_n \quad \mbox{and} \quad \Phi:=\sum_{n=1}^\infty n^\alpha\Phi_n.$$
So the sequence $\Phi_{[m]}$ decreases to $\Phi$. It is clear that $\Phi(\vartheta_0)\not=-\infty$. So $\Phi$ is a p.s.h. function. Therefore, $E:=\Phi^{-1}(-\infty)$ is a pluripolar set. For $\vartheta\not\in E$, we have $n^\alpha\Phi_n(\vartheta)\to 0$. The lemma follows.
\endproof

For $0<\delta \leq 1$, define
$$\widetilde{\mathfrak{M}}^\delta_a:=\Big\{V\in L^\infty(\B_a,\C):\  dN_V(r)-c_da^dr^d=O(r^{d-\delta+\epsilon}) \mbox{ as } r\to\infty \mbox{ for every } \epsilon>0\Big\}.$$
Proposition \ref{p:n:N} implies that
$$\mathfrak{M}^{\delta}_a\subset \widetilde{\mathfrak{M}}^\delta_a\subset \mathfrak{M}^{\delta/2}_a.$$
So Theorem \ref{t:main_2} is a consequence of the following result.

\begin{theorem} \label{t:main_2_bis}
Let $\Omega$ be a connected open set of $\C^p$. 
Let $V_\vartheta$ be a uniformly bounded family of potentials in $L^\infty(\B_a,\C)$ depending holomorphically on the parameter $\vartheta\in\Omega$. Suppose there are $\vartheta_0\in\Omega$ and $0<\delta\leq 1$ such that $V_{\vartheta_0}\in\widetilde{\mathfrak{M}}^\delta_a$. Then  there is a pluripolar set $E\subset \Omega$ such that $V_\vartheta\in \widetilde{\mathfrak{M}}^{\delta/2}_a$ for all $\vartheta\in \Omega\setminus E$.
\end{theorem}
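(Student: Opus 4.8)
The plan is to connect the scattering determinant $s_{V_\vartheta}(\lambda)$ to a family of subharmonic (in one variable) and plurisubharmonic (in the parameter $\vartheta$) functions, and then apply Lemma \ref{l:psh}. Recall from Section \ref{s:general_operator} that $s_{V_\vartheta}(\lambda)$ is an entire function of $\lambda$ of order $\leq d$, holomorphic in $\vartheta$, and that by Proposition \ref{p:Christiansen} the counting function $N_{V_\vartheta}(r)$ is comparable, up to $O(r^{d-1})$, to ${1\over 2\pi}\int_0^{2\pi}\log|s_{V_\vartheta}(re^{i\theta})|\,d\theta$. Following Christiansen, the key object is, for a suitable normalization, the function
$$u_\vartheta(\lambda):={1\over \lambda^d}\log|s_{V_\vartheta}(\lambda)|$$
or rather its spherical averages. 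The first step is to record that $\vartheta\mapsto\log|s_{V_\vartheta}(\lambda)|$ is plurisubharmonic on $\Omega$ (being $\log$ of the modulus of a holomorphic function, after dealing with zeros/poles by an approximation or by working with $s_{V_\vartheta}(\lambda)s_{V_\vartheta}(-\lambda)=1$ and the Weierstrass/Hadamard factorization), and that by Theorem \ref{t:Stefanov} it has the a priori upper bound $h_d(\theta)a^dr^d+O(r^{d-1}\log r)$ which is \emph{independent of $\vartheta$}.

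Next I would discretize in $r$: set $r=r_n$ along a geometric or polynomial sequence and define
$$\Phi_n(\vartheta):=r_n^{-d}\Big({1\over 2\pi}\int_0^{2\pi}\log|s_{V_\vartheta}(r_ne^{i\theta})|\,d\theta- c_d a^d r_n^d/1\Big)$$
normalized so that Theorem \ref{t:Stefanov} (integrated over $\theta$, using that ${1\over 2\pi}\int_0^{2\pi}h_d(\theta)\,d\theta$ is exactly $c_da^d/d$ by the definition \eqref{e:c_d} of $c_d$) gives $\Phi_n(\vartheta)\leq C r_n^{-\delta'}$ for an appropriate $\delta'$, uniformly in $\vartheta$, while the hypothesis $V_{\vartheta_0}\in\widetilde{\mathfrak M}^\delta_a$ combined with Proposition \ref{p:Christiansen} gives the matching lower bound $\Phi_n(\vartheta_0)\geq -Cr_n^{-\delta+\epsilon}$. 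With $r_n$ chosen so that $r_n^{-\delta}\asymp n^{-\gamma}$ for a suitable $\gamma$ (roughly $\gamma$ proportional to $\delta$ times the growth rate of $r_n$), Lemma \ref{l:psh} then produces a pluripolar exceptional set $E$ off which $\Phi_n(\vartheta)=o(n^{-\alpha})$ for any $\alpha<\gamma-1$, i.e. ${1\over 2\pi}\int_0^{2\pi}\log|s_{V_\vartheta}(r_ne^{i\theta})|\,d\theta=c_da^dr_n^d/d+o(r_n^{d-\delta''})$ along the sequence $r_n$. Here one must be careful that the $\Phi_n$ are genuinely p.s.h. in $\vartheta$ (spherical integrals of p.s.h. functions are p.s.h.) and uniformly bounded above with the correct rate.

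The third step is to pass from the sequence $r_n$ to all $r$ and to control the loss of exponent. Because $N_{V_\vartheta}(r)$ is monotone in $r$ and the a priori two-sided bound from Theorem \ref{t:Stefanov}/Proposition \ref{p:Christiansen} controls its oscillation on intervals of length $\asymp r^{1-\delta''}$, a standard monotonicity sandwich (exactly as in Proposition \ref{p:n:N}, or as in Christiansen \cite{Christiansen4}) upgrades the statement along $r_n$ to $dN_{V_\vartheta}(r)-c_da^dr^d=O(r^{d-\delta/2+\epsilon})$ for all $r$, provided the spacing $r_{n+1}-r_n$ and the rate $\gamma$ in Lemma \ref{l:psh} are balanced; optimizing this balance is what forces the exponent $\delta/2$ rather than $\delta$. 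This gives $V_\vartheta\in\widetilde{\mathfrak M}^{\delta/2}_a$ for $\vartheta\notin E$, which is the assertion.

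The main obstacle I expect is twofold: first, verifying plurisubharmonicity of the relevant normalized log-determinants uniformly, i.e. handling the zeros and poles of $s_{V_\vartheta}$ so that $\vartheta\mapsto \log|s_{V_\vartheta}(\lambda)|$ (or its spherical average, after subtracting the pole contribution) is a bona fide p.s.h. function with the required uniform upper bound — this is where the a priori estimate of Theorem \ref{t:upper_bound}/Theorem \ref{t:Stefanov} is essential and must be used with the sharp constant via \eqref{e:c_d}. Second, calibrating the rate $\gamma$ in Lemma \ref{l:psh} against the discretization scale so that the final monotonicity argument closes: this is the source of the factor-$2$ loss and needs the precise form $\alpha<\gamma-1$ of the lemma. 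The analytic input — Proposition \ref{p:Christiansen}, Theorem \ref{t:Stefanov}, Proposition \ref{p:n:N}, and Lemma \ref{l:psh} — is all in place, so the work is in assembling these with the right bookkeeping of exponents.
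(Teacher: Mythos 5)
Your proposal follows essentially the same route as the paper: form $\Psi(r,\vartheta)=\frac{1}{2\pi r^d}\int_0^{2\pi}\log|s_{V_\vartheta}(re^{i\theta})|\,d\theta-\frac{c_da^d}{d}$, observe it is p.s.h.\ in $\vartheta$, get the uniform upper bound $\Psi\lesssim\frac{\log r}{r}$ and the lower bound $\Psi(\cdot,\vartheta_0)\gtrsim -r^{-\delta+\epsilon}$ at the good parameter, discretize $r$ along a polynomial sequence, invoke Lemma~\ref{l:psh}, and finally interpolate using monotonicity of $N_{V_\vartheta}$. Two small points of bookkeeping to firm up: the upper bound should come from Theorem~\ref{t:upper_bound} combined with Proposition~\ref{p:Christiansen} rather than by integrating Theorem~\ref{t:Stefanov} over $\theta\in[0,2\pi]$ (the identity $\frac{1}{2\pi}\int_0^{2\pi}h_d=\frac{c_da^d}{d}$ you invoke is not what the computation in~\eqref{e:c_d} gives, since $h_d$ only controls $\theta\in[0,\pi]$ and one has $\int_0^\pi h_d\,d\theta=\frac{2\pi c_d}{d}$), and the discretization must be polynomial, specifically $r_n=n^k$ with $k=2/\delta$: a geometric sequence $r_n=2^n$ would make $\gamma$ in Lemma~\ref{l:psh} arbitrarily large but would only yield a logarithmic-in-$r$ gain, which is not enough, so the polynomial scale is forced and is what produces exactly the $\delta/2$ exponent after optimizing $\alpha<\gamma-1$.
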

\proof 
Define for $r$ large enough
$$\Psi(r,\vartheta):={1\over 2\pi r^d} \int_0^{2\pi} \log |s_{V_\vartheta}(r e^{i\theta})| d\theta-{c_d a^d\over d} \cdot$$
Since $V_\vartheta$ depends holomorphically on $\vartheta$, the function $s_{V_\vartheta}$ depends also holomorphically on $\vartheta$. Hence, $\Psi(r,\vartheta)$ is p.s.h. on $\vartheta$. 

Fix a small constant $\epsilon>0$. 
By Theorem \ref{t:upper_bound} and Proposition \ref{p:Christiansen}, we have
$$\Psi(r,\vartheta)\lesssim {\log r \over r} \quad \mbox{and} \quad \Psi(r,\vartheta_0)\gtrsim -r^{-\delta+\epsilon}.$$
Define for $k:={2\over\delta}$
$$\Phi_n(\vartheta):= \Psi(n^k,\vartheta).$$
Then
$$\Phi_n(\vartheta)\lesssim (\log n)n^{-k} \quad \mbox{and} \quad \Phi_n(\vartheta_0)\gtrsim n^{-k(\delta-\epsilon)}.$$

Lemma \ref{l:psh} implies that for $\vartheta$ outside a pluripolar set $E_\epsilon$ we have 
$$\Phi_n(\vartheta)=o(n^{-k(\delta-\epsilon)+1+\epsilon})=o(n^{-1+{2\epsilon\over\delta}+\epsilon}).$$
Hence, by Proposition \ref{p:Christiansen}, we have  for $r=n^k$ and for $\vartheta\not\in E_\epsilon$
\begin{equation}\label{e:N:speed}
N_{V_\vartheta}(r)={c_d a^d r^d\over d} + O(r^{d-{\delta\over 2}+\epsilon+{\epsilon\delta\over 2}}).
\end{equation}
Since $N_{V_\vartheta}(r)$ is increasing in $r$, we deduce that if $n^k\leq r\leq (n+1)^k$ 
\begin{eqnarray*}
\Big|N_{V_\vartheta}(r)-{c_d a^d r^d\over d}\Big| & \lesssim &  (n+1)^{kd}-n^{kd}+O(r^{d-{\delta\over 2}+\epsilon+{\epsilon\delta\over 2}}) \\
& \lesssim & n^{kd-1}+O(r^{d-{\delta\over 2}+\epsilon+{\epsilon\delta\over 2}}) \\
& \lesssim & O(r^{d-{\delta\over 2}+\epsilon+{\epsilon\delta\over 2}}).
\end{eqnarray*}
So the property (\ref{e:N:speed}) holds for 
$r\to \infty$ with $r\in\R_+$.

Finally, define $E:=\cup_{n=1}^\infty E_{1/n}$. This is a pluripolar set, see e.g. \cite{LelongGruman}. We have for all $\vartheta\not\in E$ and $\epsilon>0$
$$N_{V_\vartheta}(r)={c_d a^d r^d\over d} + O(r^{d-{\delta\over 2}+\epsilon})\quad \mbox{as} \quad r\to\infty.$$
This completes the proof of the theorem.
\endproof

\small

\noindent
T.-C. Dinh, UPMC Univ Paris 06, UMR 7586, Institut de
Math{\'e}matiques de Jussieu, 4 place Jussieu, F-75005 Paris, France.\\ 
{\tt dinh@math.jussieu.fr}, {\tt http://www.math.jussieu.fr/$\sim$dinh}

\medskip

\noindent
D.-V.  Vu,
Department of Mathematics,
Hanoi National University of Education,
136 Xuan Thuy str., Hanoi, Vietnam.\\
{\tt vuvietsp@gmail.com}

\end{document}